\newif\ifpreprint
\preprintfalse
\preprinttrue % Uncomment for preprint mode
\ifpreprint
\documentclass{article}
\usepackage[margin=1.2in]{geometry}
\usepackage{natbib}
\setcitestyle{authoryear, aysep={ }, comma}
\usepackage{authblk}
\else
\documentclass[aos]{imsart}
\usepackage[authoryear]{natbib}
\fi

\usepackage[T1]{fontenc}
\usepackage{graphicx}
\usepackage{amsmath,amsfonts,amssymb,stmaryrd,amsthm,thmtools}
\usepackage{thm-restate}
\usepackage[inline]{enumitem}
\usepackage[acronym,abbreviations,prefix]{glossaries-extra}

\declaretheorem[numberwithin=section]{theorem}
\declaretheorem[numberwithin=section,sibling=theorem]{lemma}
\declaretheorem[numberwithin=section,sibling=theorem]{corollary}

\declaretheorem[style=definition,numberwithin=section,sibling=theorem]{definition}
\declaretheorem[style=definition,numberwithin=section,sibling=theorem]{remark}

\ifpreprint
\usepackage{color, xcolor}
\definecolor{darkgreen}{rgb}{0,0.4,0}
\usepackage[
    colorlinks,
    breaklinks,
    linkcolor=darkgreen,
    citecolor=darkgreen,
    urlcolor=darkgreen,
    linktocpage
]{hyperref}
\else
\usepackage[colorlinks,breaklinks,citecolor=blue,urlcolor=blue]{hyperref}%% uncomment this for coloring bibliography citations and linked URLs
\fi
\usepackage[noabbrev,capitalise,nosort,nameinlink]{cleveref}
\crefname{equation}{}{}  % do not write Equation when referencing an equation

\newcommand{\MM}[1]{\bgroup\color{olive}MM:~#1\egroup}
\newcommand{\pfm}[1]{\bgroup\color{teal}PFM:~#1\egroup}
\newcommand{\seb}[1]{\bgroup\color{blue}Seb:~#1\egroup}

\newcommand{\sigAlg}{\mathcal A}
\newcommand{\filtration}{\mathcal F}
\newcommand{\E}{\mathbb E}
\newcommand{\probaMeasures}{\mathcal{M}_1^+}
\newcommand{\subsetProbaMeasures}{\mathcal{P}}
\newcommand{\R}{\mathbb{R}}
\newcommand{\Rnn}{\R_{\geq 0}}
\newcommand{\Rp}{\R_{>0}}
\newcommand{\N}{\mathbb{N}}
\newcommand{\Np}{\N_{>0}}
\newcommand{\dd}{\mathrm{d}}
\newcommand{\rBoundedStoppingTimes}{\mathcal{T}}
\newcommand{\norm}[1]{\lVert#1\rVert}
\newcommand{\integers}[1]{\llbracket#1\rrbracket}

\newcommand{\pwMax}{\vee}
\newcommand{\pwMin}{\wedge}

\newcommand{\indicator}{\mathbf{1}}
\newcommand{\ess}{\mathrm{ess}}
\newcommand{\frm}{\mathrm{fin}}

\ifpreprint
\newcommand\blfootnote[1]{%
  \begingroup
  \renewcommand\thefootnote{}\footnote{#1}%
  \addtocounter{footnote}{-1}%
  \endgroup
}
\newcommand{\keywords}[1]{%
  \par\vspace{0.5em}
  {\small\noindent\textbf{Keywords:} #1\par}
}
\newcommand{\msc}[1]{%
  \par\vspace{0.25em}
  {\small\noindent\textbf{2020 Mathematics Subject Classification:} #1\par}
}
\fi
\newcommand{\preprintdot}{\ifpreprint.\fi}

\setabbreviationstyle[acronym]{long-short}
\glsdisablehyper
\newacronym{as}{a.s.}{almost surely}
\newacronym{asp}{ASP}{asymptotic supermartingale property}
\newabbreviation[prefixfirst={a\space},prefix={an\space}]{asm}{ASM}{asymptotic supermartingale}
\newacronym{rhs}{RHS}{right-hand side}
\newacronym{savi}{SAVI}{safe anytime-valid inference}
\newacronym{saep}{SAEP}{strongly $r$-asymptotic e-process}
\newacronym{aep}{AEP}{$r$-asymptotic e-process}
\newacronym[longplural={strongly $r$-asymptotic p-processes}]{sapp}{SAPP}{strongly $r$-asymptotic p-process}
\newacronym[longplural={$r$-asymptotic p-processes}]{app}{APP}{$r$-asymptotic p-process}

\begin{document}
\ifpreprint\else\begin{frontmatter}\fi
\title{Asymptotic e-processes}
\ifpreprint
    \author{Pierre-Fran\c{c}ois Massiani}
    \author{Sebastian Schulze}
    \author{Mattes Mollenhauer}
    \date{\today}
    \affil{
        \small Merantix Momentum Research
    }
    \maketitle
    \blfootnote{Correspondence: \texttt{\{pierrefrancois.massiani,sebastian.schulze,mattes.mollenhauer\}@merantix-momentum.com}}
\else
    \runtitle{Asymptotic e-processes}
    \begin{aug}
    \author[A]{\fnms{Pierre-Fran\c{c}ois}~\snm{Massiani}\ead[label=e1]{pierrefrancois.massiani@merantix-momentum.com}}
    \author[A]{\fnms{Sebastian}~\snm{Schulze}\ead[label=e2]{sebastian.schulze@merantix-momentum.com}}
    \author[A]{\fnms{Mattes}~\snm{Mollenhauer}\ead[label=e3]{mattes.mollenhauer@merantix-momentum.com}}
    \address[A]{Merantix Momentum Research\printead[presep={,\ }]{e1,e2,e3}}
    \end{aug}
\fi

\begin{abstract}
We investigate the concept of an \emph{asymptotic e-process},
which is a doubly-indexed stochastic process
$(E_{m,n})_{m,n\in\N}$ that possesses, asymptotically for an approximation index $m\to\infty$, the properties of an
e-process along a monitoring time index $n$.
This constitutes the first in-depth study of this recently introduced concept, which is relevant in \emph{asymptotic sequential anytime-valid inference}.
Our theory is motivated by practical applications
in sequential hypothesis testing, 
in which e-variables and e-processes 
can only be constructed approximately 
from observations 
due to model misspecification or estimation errors.
Technically, asymptotic e-processes satisfy an asymptotic version of
Ville's inequality, which
bounds excursion probabilities of $(E_{m,n})_{m,n\in\N}$
uniformly over $n$ up to
a \emph{monitoring time horizon} $r_m$.
We show the necessity of allowing for \emph{finite} values of $r_m$,
recovering truly anytime-valid guarantees asymptotically if $r_m\to\infty$.
We derive various properties of asymptotic e-processes, and study
their connections to asymptotic
supermartingales.
We also investigate general methods for their construction such as calibration, the cumulative product of asymptotic e-variables, and the monitoring an of an e-process that depends on an estimated parameter. The latter construction
constitutes a generalization of a recent approach
within the context of asymptotic post-hoc inference.
\end{abstract}

\ifpreprint
    \keywords{e-process, e-variable, e-value, test martingale, supermartingale, Ville's inequality, sequential analysis, sequential hypothesis testing, asymptotic hypothesis testing, optional stopping}
    \msc{62L10, 60G07, 60G42}
    {
    \small 
    \setcounter{tocdepth}{2}
    \tableofcontents
    }
\else
    \begin{keyword}[class=MSC]
    \kwd[Primary ]{62L10}
    \kwd[; secondary ]{60G07}
    \kwd{60G42}
    \end{keyword}
    
    \begin{keyword}
    \kwd{e-process}
    \kwd{e-variable}
    \kwd{e-value}
    \kwd{test martingale}
    \kwd{supermartingale}
    \kwd{Ville's inequality}
    \kwd{sequential analysis}
    \kwd{sequential hypothesis testing}
    \kwd{asymptotic hypothesis testing}
    \kwd{optional stopping}
    \end{keyword}
    \end{frontmatter}
\fi

\section{Introduction}
\emph{Sequential testing} addresses the problem of statistical decision-making in settings where observations are collected continuously over time, requiring repeated assessment of whether accumulated evidence warrants rejection of the null hypothesis.
Such tests arise naturally in a wide range of settings where data are collected sequentially, from monitoring patients in clinical trials and online A/B user studies to scientific experimentation.
Starting with the work of \cite{waldSequentialMethodSampling1945}, sequential tests have been characterized through \emph{stopping times}, that is, data-dependent (possibly infinite) times at which the null is rejected.
A recurring source of criticism, however, is that guarantees are tied to a stopping rule that must be fully specified before any data are observed.
In practice, this separation is routinely violated, invalidating the guarantees.
As \citet[Section 7.10]{ramdasHypothesisTestingEvalues2025} illustrate,
even well-meaning practitioners can fall into this trap.
This phenomenon is arguably a central cause of irregularities in the reporting of statistical evidence \citep{shaferLanguageBettingStrategy2019}.

\paragraph*{SAVI and e-processes\preprintdot}
The recent framework of \emph{\gls{savi}} addresses this issue by introducing sequential statistical objects (such as measures of evidence
and corresponding confidence sequences) that provide appropriate guarantees at any point in time, regardless of the stopping rule applied.
A central class of such objects is that of \emph{e-processes}
\citep{howardTimeUniformNonparametricNonasymptotic2021,ramdasAdmissibleAnytimevalidSequential2022,ramdas2022exchangeability,grunwaldSafeTesting2024}, which can be constructed based on (sequential) \emph{e-variables} \citep{vovk2021evalues}.
Intuitively, e-processes accumulate evidence against the null hypothesis; they should remain small uniformly across time if the null holds, and grow if it does not.
Formally, an e-process $(E_n)_{n \in \N}$ is a nonnegative stochastic
process satisfying $\E_P[E_\tau] \leq 1$ for every stopping
time $\tau$ and distribution $P$ in the null.
Their connection to supermartingales makes e-processes theoretically and practically tractable, and \emph{Ville's inequality}, 
which bounds their excursion probability past any fixed threshold, provides the central instrument for translating them into practical tests: for all $\alpha > 0$ and all distributions $P$ in
the null, e-processes satisfy
\begin{equation*}
    P \left[\sup_{n \in \N} E_n \geq \frac{1}{\alpha} \right]
    \leq \alpha.
\end{equation*}
A game-theoretic view established in the \emph{testing-by-betting} framework \citep{shafer2019foundations,shafer2021betting, shekharNonparametricTwoSampleTesting2025} provides another tool for interpretation and analysis of e-processes as an adversarial game.
E-processes, \gls{savi}, and the testing-by-betting framework have seen rapid development in recent years; we refer to \cite{ramdasHypothesisTestingEvalues2025} and the references therein (particularly, Section 1.8) for a comprehensive review.

\paragraph*{Asymptotic SAVI\preprintdot}
In practice, however, the construction of e-processes often requires precise knowledge of quantities that are unavailable and have to be estimated from data instead.
Establishing finite-sample guarantees based on these estimates is challenging and often requires additional constraints such as restricting the null hypothesis \citep{massianiKernelConditionalTests2025}.
This mirrors classical hypothesis testing, where finite-sample tests, particularly under large composite nulls or in nonparametric settings, are often intractable, and one often resorts instead to \emph{asymptotic tests}. 
Intuitively, such tests are obtained by substituting consistent estimators for unknown quantities and recover validity in the limit of infinite data.
Yet, while asymptotic testing has a rich and well-developed theory in the classical setting, the corresponding framework for \gls{savi} is less mature and has only been partially investigated in recent works.
\cite{ignatiadisAsymptoticCompoundEvalues2024} introduce approximate and asymptotic e- and p-variables. While this facilitates an approximate treatment of the building blocks of \gls{savi}, it stops short of addressing asymptotic anytime-validity directly. 
\cite{waudby-smithDistributionuniformAnytimevalidSequential2026} propose distribution-uniform anytime p-values satisfying anytime-validity asymptotically with respect to an approximation index. 
\cite{choGAAVIGlobalAsymptotic2026} propose an implementation that achieves a similar practical goal but without uniformity over stopping rules, making it less directly comparable to the \gls{savi} framework.
\citet{chuggPostHocLargeSampleStatistical2026} are the first (to our knowledge)
to propose the notion and an example of asymptotic e-processes, building upon
\citet{ignatiadisAsymptoticCompoundEvalues2024} and
intuitively combining the defining properties
of e-processes and of asymptotic e-variables. 
Nevertheless, asymptotic e-processes still lack a thorough and systematic study of their properties and construction.

\begin{remark}[Update of this manuscript and connection
to \citealt{chuggPostHocLargeSampleStatistical2026}]
    Shortly after the release of the first version of this manuscript,
    we became aware that the term \emph{asymptotic e-processes} had
    been proposed several weeks earlier by \citet{chuggPostHocLargeSampleStatistical2026}
    for the first time in a different context in terms
    of the property \cref{eq:asymptotic e-process intro} discussed below. 
    While the work by \cite{chuggPostHocLargeSampleStatistical2026} is 
    predominantly focused on asymptotic post-hoc inference and explores 
    connections to relevant objects in this context, our work 
    focuses on asymptotic e-processes as a primary object of study
    and investigates their properties and general construction methods,
    with the goal of 
    reflecting existing theory of the nonasymptotic case.
    Crucially, this goal calls for a definition more general than that of \eqref{eq:asymptotic e-process intro} and \cite{chuggPostHocLargeSampleStatistical2026}, as we illustrate in the next paragraph.
    Independently of this fact,
    we want to emphasize that the work of \cite{chuggPostHocLargeSampleStatistical2026} inspired us to investigate non-strongly asymptotic e-processes more thoroughly (cf.\ \cref{def:asymptotic e-process}), as well as a general method for constructing asymptotic e-processes via burn-in and domination, which we added as \cref{subsec:event partitioning}.
\end{remark}

\paragraph*{Unbounded monitoring horizons are too restrictive for finite approximation indices\preprintdot}
Recent definitions in the context of asymptotic \gls{savi} are stated as limiting conditions that should hold asymptotically in an approximation index.
Such conditions involve arbitrary stopping times \citep{chuggPostHocLargeSampleStatistical2026}, or a time-uniform quantifier \citep{waudby-smithDistributionuniformAnytimevalidSequential2026}.
Despite being asymptotic, these conditions have pre-asymptotic consequences: for them to hold asymptotically, the underlying object must satisfy nontrivial conditions for finite approximation indices.
For instance, a sequence of expectations must first become finite to be less than $1$ in limit superior.
While such constraints can be imposed axiomatically and are satisfied in practice by some objects, they are overly restrictive for many constructions.
We summarize this concern as follows:
\begin{quote}
    For asymptotic e-processes and related objects, properties related to monitoring times that may be unbounded or infinite should only be required to hold in the limit of the approximation index.
    Requiring such properties to hold at any finite approximation index, even implicitly as a consequence of an asymptotic condition, is too strong for an axiomatic definition.
\end{quote}

To illustrate this point and ground the discussion, we now consider the definition of
an asymptotic e-process by \citet[Definition 4.2]{chuggPostHocLargeSampleStatistical2026}; a similar argument could be made for the definition of anytime p-values in \cite{waudby-smithDistributionuniformAnytimevalidSequential2026}.
The definition by \citet{chuggPostHocLargeSampleStatistical2026}
involves a doubly-indexed nonnegative random process $E = (E_{m,n})_{m,n\in I}$ with \emph{monitoring time index} ${n \in \N}$ and \emph{approximation index} ${m \in \N}$. 
Here, $I = \{(m,n)\in\N^2\mid m\leq n\}$ is the index set of upper-triangular arrays; this structure does not play a role in this argument, but is the one considered in the reference.
Once reformulated in the vocabulary of \cite{ignatiadisAsymptoticCompoundEvalues2024}, the definition imposes that, for any stopping time\footnote{Note that this implicitly assumes that there is a single filtration $\filtration = (\filtration_{n})_{n\in\N}$ such that $E_{m,\bullet}$ is adapted to $\filtration$ for all $m\in\N$, which we assume in this paragraph but not in the main body of this work.} $\tau$, the variable $(E_{m,\tau\pwMax m})_{m\in\N}$ is a uniformly strongly asymptotic e-variable; that is, \begin{equation}\label{eq:asymptotic e-process intro}
    \limsup_{m\to\infty}\E_P[E_{m,\tau\pwMax m}]\leq 1.
\end{equation}
This definition allows unbounded and possibly infinite stopping times.
In particular, for every such $\tau$, there must exist $m_0\in\N$ such that the expectation in \cref{eq:asymptotic e-process intro} is finite for all $m\geq m_0$.
For instance, this is the case for the stopping time $\tau$ constant equal to $\infty$,
illustrating that \cref{eq:asymptotic e-process intro} indeed imposes a property over infinite monitoring times for finite approximation indices.
In fact, a consequence of one of our results is that processes that satisfy \cref{eq:asymptotic e-process intro} can be \emph{renormalized} into true e-processes; there exists $C\in\Rnn$ (independent of $\tau$) such that $C\cdot E$ is an e-process (cf.\ \cref{rmk:infinity-asymptotic e-processes}).
This intuition already highlights that \cref{eq:asymptotic e-process intro} is a very strong requirement and excludes important and natural constructions that require a genuinely asymptotic notion, for example the cumulative product of sufficiently regular asymptotic e-variables.
\par
Let us illustrate this in more detail by investigating the cumulative product construction, which is arguably the cornerstone of e-process theory within
the context of supermartingales and
the testing-by-betting framework.
Assume that we have a collection of independent asymptotic e-variables $(e_{m,n})_{m,n\in\N}$
such that $\E_P[e_{m,n}] = 1 + \varepsilon_{m}$ for all $m,n \in \N$, with vanishing approximation error $\varepsilon_{m} > 0$ and $\varepsilon_m \to 0$ as $m \to \infty$.
We define the cumulative product
\begin{equation*}
    E_{m,n} :=  
    \prod_{i=0}^n
    e_{m,i}.
\end{equation*}
We see that, for every $m,n \in \N$, we have
$
    \E_P[E_{m,n}] = (1 + \varepsilon_m)^{n+1},
$
and hence for any almost surely finite but unbounded stopping time $\tau$
that is independent of $E$, we get
\begin{equation}\label{eq:diverging series}
    \E_P[E_{m,m\pwMax\tau}] = 
    \sum_{n =0}^\infty
    P[m\pwMax\tau = n] \,
    (1 + \varepsilon_m)^{n+1} = \sum_{n = m}^\infty
    P[\tau = n] \,
    (1 + \varepsilon_m)^{n+1},
\end{equation}
where the \gls{rhs} diverges for every fixed $m$ if the tail $P[\tau = n]$ does not decay fast enough as $n\to\infty$,
clearly conflicting with \cref{eq:asymptotic e-process intro}.
\par
The cause for this is clear: the excess $\varepsilon_m$ in the expectation of each factor compounds over time, and for any fixed $m$ this accumulation eventually dominates. Condition \cref{eq:asymptotic e-process intro} offers no way to account for this compounding effect, making it incompatible with the cumulative product construction for any finite $m$.
The key insight motivating our definition is that this compounding can be controlled by introducing a monitoring horizon $r_m \in \N$ that grows with $m$ but slowly enough that the accumulated error remains negligible. 
Specifically, we show that if $r_m \cdot \varepsilon_m \rightarrow 0$ as $m \rightarrow \infty$, the diverging series is effectively truncated before the error accumulates, and an asymptotic analogue of Ville's inequality can be recovered up to time $r_m$ under appropriate conditional assumptions (\cref{thm:cumulative product}). 
In the context of \cref{eq:asymptotic e-process intro}, this corresponds to imposing an \emph{upper bound} on the stopping time $\tau$, which evolves with the approximation index $m\in\N$.
As $m \rightarrow \infty$ and the approximation improves, $r_m \rightarrow \infty$ as well, so the guarantees become truly anytime-valid, but only in the limit, consistently with the concern raised at the beginning of this paragraph.
This is the central idea behind our definition of an asymptotic e-process, which we develop formally in \cref{sec:asymptotic-e-processes}.

\paragraph*{This work: $r$-asymptotic e-processes\preprintdot}

We propose the concept of a \emph{(uniformly) \gls{saep}}
as a doubly-indexed nonnegative process $(E_{m,n})_{m,n \in \N}$
that possesses the defining property of an e-process in a suitable limiting
sense whenever $m \to \infty$: informally,
we require the condition
\begin{equation*}
    \limsup_{m \to \infty} 
    \E_P[E_{m,\tau_m}]
    \leq 1.
\end{equation*}
\emph{over sequences $(\tau_m)_{m \in \N}$ 
of stopping times that are pointwise bounded by
the values of the extended integer horizon sequence 
$r = (r_m)_{m \in \N}
\subset \N \cup \{\infty\} $ for every $m \in \N$}.
The terminology is based on the work of \cite{ignatiadisAsymptoticCompoundEvalues2024}, as the definition involves uniformly strongly asymptotic e-variables.
We also introduce
the weaker notion of \emph{(uniformly) \glspl{aep}}, for which the defining property only holds for the thresholded process $(E_{m,n}\pwMin t)_{m,n\in\N}$, where $t\in\Rnn$ is arbitrary.
Clearly, the definition of \citet[Definition 4.2]{chuggPostHocLargeSampleStatistical2026}
is recovered for the constant sequence $r_m \equiv \infty$ for all $m \in \N$, and the example they construct in Proposition 4.4 is a uniformly $r$-asymptotic e-process with $r\equiv\infty$.
In our work, however, we reserve the generic term ``asymptotic e-process'' to designate informally $r$-\glspl{aep} or $r$-\glspl{saep} when neither the sequence $r$ nor the strength of the notion are relevant to the discussion.
We show that this general
definition satisfies two constraints: \begin{enumerate*}[label=(\roman*)]
    \item compatibility with the \gls{savi} framework, in the sense that \gls{savi} is possible asymptotically; and
    \item processes of this type are easily constructed from sufficiently regular asymptotic e-variables, as this is arguably one of the key strengths of (non-asymptotic) e-processes.
\end{enumerate*}
This second requirement is critical for practical relevance.
\par
As motivated in the previous section, the monitoring horizon sequence
$r = (r_m)_{m \in \N}$ typically grows with the approximation index $m\in\N$, capturing the time horizon up to which the process enjoys anytime-valid properties asymptotically.
In particular, our definition implies an asymptotic version of Ville's inequality
(\cref{thm:asymptotic-ville}), which is stated for $r$-\glspl{saep} as:
\begin{equation}
    \label{eq:asymptotic-ville-intro}
        \limsup_{m\to\infty} 
        P\left[\sup_{n\in \{1,\dots,r_m\}} E_{m,n} \geq \frac1\alpha\right]\leq\alpha.
\end{equation}
When $r_m\to\infty$ as $m\to\infty$, the guarantees are truly anytime-valid in the limit; this is certainly the most interesting and motivating case, but we allow general choices. 
The inequality \cref{eq:asymptotic-ville-intro} confirms that for asymptotic e-processes,
Ville's inequality holds for increasingly large time horizons as the approximation quality improves.
We visualize this phenomenon in \cref{fig:intro-experiment}.
\begin{figure}[t]
    \centering
    \includegraphics[width=0.99\linewidth]{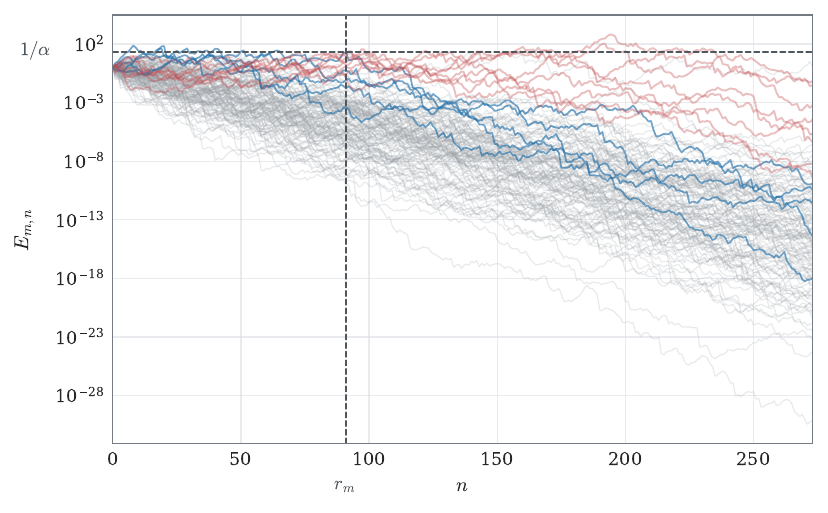}
    \caption{Multiple realizations
    of an asymptotic e-process $(E_{m,n})_{m,n \in \N}$
    over the monitoring time index $n \in \N$
    for a fixed approximation level $m \in \N$.
    The blue trajectories cross the threshold $1/\alpha$
    at some monitoring time $n \leq r_m$, where
    the time horizon $r_m$ depends on
    the quality of approximation determined by $m$.
    Our asymptotic version of Ville's
    inequality in \eqref{eq:asymptotic-ville-intro}
    shows that asymptotically in $m$, the probability
    of this event is at most $\alpha$.
    In contrast to notions that impose $r_m=\infty$ in their version
    of Ville's inequality, the red trajectories
    crossing the threshold for $n > r_m$ 
    cannot be accounted for in general due to 
    a compounding approximation error effect.
    We refer the reader to \cref{sec:numerical-experiments}
    for a detailed construction of the visualized process
    and the experimental setup.
    \label{fig:intro-experiment}}
\end{figure}
\par
A central focus of this paper is deriving general methods for the \emph{construction} of asymptotic e-processes.
We study two main methods.
The first one involves the cumulative product of asymptotic e-variables.
In this context, we investigate the close relationship between $r$-\glspl{saep} and \emph{asymptotic supermartingales}.
The second one is the one leveraged in \cite{chuggPostHocLargeSampleStatistical2026} and, to some extent, \cite{waudby-smithDistributionuniformAnytimevalidSequential2026}.
Namely, it consists of inserting a (modified) estimator of an unknown parameter into the expression defining an e-process in order to obtain asymptotic properties.
We show that this procedure works if the estimator is consistent under a mild monotonicity condition, e.g., when the original e-process is a self-normalized process \citep{penaSelfnormalizedProcessesExponential2004,howardTimeuniformChernoffBounds2020,howardTimeUniformNonparametricNonasymptotic2021}.
We also investigate two other constructions; namely, the calibration of the distribution-uniform anytime-valid p-values of \cite{waudby-smithDistributionuniformAnytimevalidSequential2026}, which we connect to asymptotic p-processes, and a time-mixture construction from asymptotic e-values extending that of \citet[Section 7.9]{ramdasHypothesisTestingEvalues2025}.

\subsection{Summary of contributions}

For the reader's convenience, we give a more detailed
high-level overview of the results in this paper.
In \cref{def:asymptotic e-process}, we introduce 
$r$-\glspl{aep} and $r$-\glspl{saep} (which we
simply refer to as ``asymptotic e-processes'' in 
a general context) as doubly-indexed nonnegative processes $E = (E_{m,n})_{m,n\in\N}$ satisfying an asymptotic e-variable condition for all stopping times up to a horizon  $r_m\in\N\cup\{\infty\}$.
Here, the extended integer sequence $r = (r_m)_{m\in\N}$ is a parameter of the notion quantifying the maximum monitoring time at approximation index $m\in\N$.
Our first main result, \cref{prop:horizon of uniformly asymptotic e-processes}, shows via a diagonal argument that the monitoring horizon $r$ can generally be assumed to diverge to $\infty$ under mild conditions.
\cref{thm:characterizations r-SAEP} provides several equivalent characterizations identifying $r$-\glspl{saep} as processes that are upper bounded for all $m\in\N$ by a process that is a supermartingale up to time $r_m$ and satisfies an asymptotic calibration property.
\cref{clry:characterizations r-AEP} extends one of the characterizations to $r$-\glspl{aep}.
Those results can be seen as direct generalizations of \citet[Lemma 6]{ramdasAdmissibleAnytimevalidSequential2022} to the asymptotic case, and enable a clear interpretation of the sequence $r$.
\cref{thm:characterizations r-SAEP} also formally justifies the interest of introducing the sequence $r$ in the definition, since it shows that $r$-asymptotic e-processes with $r\equiv\infty$ can be renormalized to be e-processes.
Finally, we establish in \cref{thm:asymptotic e-processes converge to e-processes} that, for processes that converge in $L_1$ along the $m$-axis, the notions of $r$-\glspl{aep} and of $r$-\glspl{saep} coincide when $r\to\infty$, and are equivalent to the limit itself being an e-process.
This further grounds the intuition that asymptotic e-processes are approximations of e-processes that are only valid asymptotically, and that no asymptotic properties are lost by allowing finite values of $r_m$ as long as $r$ diverges.
\par
We then investigate the connection between  $r$-\glspl{saep} and supermartingales. We 
introduce in \cref{def:asymptotic supermartingale} \emph{\glspl{asm}} as processes 
converging in $L_1$ to a supermartingale, and in \cref{def:asp} the \emph{\gls{asp}} as a weaker condition pointwise in $n \in \N$ on excess expectations capturing the idea of behaving similarly to a supermartingale asymptotically without requiring $L_1$ convergence. 
\cref{thm:asp implies asymptotic e-process} establishes that calibrated processes with the ASP are $r$-\glspl{saep} for any sequence $r$ satisfying an explicit condition that 
formalizes the intuition of truncating the series \cref{eq:diverging series}.
This provides a simple and practical sufficient criterion for verifying that a process is an $r$-\gls{saep}, and yields an explicit method for constructing the corresponding sequence $r$. 
We leverage this in \cref{subsec:cumulative-product} to show that the cumulative product of sequentially asymptotic e-variables yields an $r$-\gls{saep}.
\cref{subsec:asymptotic e-processes without the asp} shows that not all $r$-\glspl{saep} satisfy the ASP, mirroring the fact that not all e-processes are supermartingales.
We construct such examples in \cref{subsec:time mixture} by adapting the time-mixture construction of \citet[Section 7.9]{ramdasHypothesisTestingEvalues2025}.
\par
\cref{thm:asymptotic-ville} is an asymptotic version of Ville's inequality for asymptotic e-processes. This result ties asymptotic e-processes directly to asymptotic hypothesis testing and establishes $r$ as the time horizon up to which the process can be monitored with asymptotic guarantees.
\par
In \cref{sec:construction} we present various ways to construct asymptotic e-processes, supporting our claim of practical constructability.
We focus first on the cumulative product already introduced, as well as an analogue of the time-mixture construction for e-processes \cite[Section 7.9]{ramdasHypothesisTestingEvalues2025}, thereby exhibiting asymptotic e-processes that do not have the \gls{asp}.
The cumulative product is illustrated with simple numerical experiments in \cref{sec:numerical-experiments}.
We investigate the domination method; a process upper-bounded by an $r$-\gls{aep} with asymptotically full probability is itself an $r$-\gls{aep} (\cref{thm:domination}).
This enables constructing $r$-\glspl{aep} by inserting (over-)estimators into expressions of true e-processes under a monotonicity condition, generalizing the construction of \cite{chuggPostHocLargeSampleStatistical2026} in \cref{clry:domination via monotonicity,thm:r-aep from burn in of single data stream}.
We also examine the calibration \cite[Section 2.3]{ramdasHypothesisTestingEvalues2025} of the anytime p-values introduced in \cite{waudby-smithDistributionuniformAnytimevalidSequential2026}.
More systematically, we introduce asymptotic p-processes in \cref{def:asymptotic p-process}, and identify anytime p-values as uniformly $\infty$-asymptotic p-processes in \cref{thm:anytime p-values are infty-APPs}.
The general result is that calibrating asymptotic p-processes yields asymptotic e-processes (\cref{thm:calibration}), mirroring the results available in the non-asymptotic case.
\par
We conclude this section by clarifying the relations of the present work to the previous studies of \cite{ignatiadisAsymptoticCompoundEvalues2024}, \cite{waudby-smithDistributionuniformAnytimevalidSequential2026}, and \cite{chuggPostHocLargeSampleStatistical2026}.
All three studies are central to our contributions.
In particular, they all introduce indexing by the quality of approximations, leading to our central object being a bi-indexed process, and all provide a clear framework for handling distribution-uniform guarantees.
This uniformity is essential for ensuring that the sequence $r$ does not depend on the choice of distribution $P$ in the null.
Furthermore, \cite{ignatiadisAsymptoticCompoundEvalues2024} provide the foundational definitions upon which we build various notions of asymptotic e-processes, just as e-processes are defined in terms of e-values.
Nevertheless, an idea central to our work and not explored in those prior studies is that time uniformity should not be imposed before the asymptotic regime is attained.
It is precisely this insight that enables constructing asymptotic e-processes under relatively weak assumptions while retaining strong guarantees in the limit.

\subsection{Structure of this paper}
In \cref{sec:preliminaries}, we introduce
our notation and the necessary background of
e-variables and e-processes.
We present the formal definition of an asymptotic e-process
in \cref{sec:asymptotic-e-processes},
as well as the main theoretical results pertaining to their characterization and convergence.
\cref{sec:asymptotic-supermartingales} connects them to the notion of asymptotic supermartingales, culminating in an explicit sufficient condition for $r$-asymptotic e-processes.
\cref{sec:asymp-ville} contains 
an asymptotic version of Ville's inequality for 
asymptotic e-processes.
In \cref{sec:construction},
we construct specific examples of asymptotic e-processes
from asymptotic e-variables, formalize the domination method and its relation to plugging-in estimators, and study the calibration of anytime p-values.
\Cref{sec:numerical-experiments} showcases on simple examples how the behavior described by the asymptotic Ville's inequality can be observed empirically.
All proofs can be found in the supplementary materials.
\section{Preliminaries}
\label{sec:preliminaries}

We now introduce the necessary mathematical background
and the concepts of e-variables and e-processes.

\paragraph*{Notation\preprintdot}
Throughout this article, $\R$ is the set of real numbers, and $\Rnn$ and $\Rp$ are respectively the sets of nonnegative and of positive real numbers.
The set of nonnegative integers is $\N$, and $\Np$ is the set of positive integers.
When considered as measurable spaces, they are understood as equipped with their Borel $\sigma$-algebras, and so is the extended real line $\R\cup\{-\infty,\infty\}$ and the corresponding extensions of $\Rnn$, $\Rp$, and $\N$.
A \emph{sequence of extended integers} is a sequence indexed by $\N$ with values in $\N\cup\{\infty\}$.
Finally, for $n\in\N\cup\{\infty\}$, the notation $\integers n$ denotes the subset of $\N$ composed of elements that are at most $n$, with the convention that $\integers\infty = \N$.
We also define the standard notation $m\pwMax n = \max(m,n)$ and $m\pwMin n = \min(m,n)$, for all $m,n\in\N\cup\{\infty\}$.
\par
We introduce $(\Omega,\sigAlg)$ a measurable space on which we define all random elements, and $\probaMeasures(\Omega)$ is the set of probability measures on $(\Omega,\sigAlg)$.
We also introduce a subset $\subsetProbaMeasures\subset\probaMeasures(\Omega)$.
Real random variables are considered in the extended sense by default: a real (resp., nonnegative, positive) random variable takes values in $\R\cup\{-\infty,\infty\}$ (resp., $\Rnn\cup\{\infty\}$, $\Rp\cup\{\infty\}$).
When random variables are assumed to only take finite values, they are explicitly specified to be \emph{finite}, or $P$-\gls{as} finite if they are finite on a set of full measure for a measure $P\in\probaMeasures(\Omega)$.
For any real random variable $X$, we define its positive part as $X^+ := \max\{0,X\}$ and its negative part as $X^- = (-X)^+$.
If it is $P$-integrable for some $P\in\probaMeasures(\Omega)$, we introduce its expectation on the probability space $(\Omega,\sigAlg,P)$ as \begin{equation*}
    \E_P[X] := \int_\Omega X(\omega) \dd P(\omega).
\end{equation*}
In particular, integrability implies that $X$ takes values in $\R$, $P$-\gls{as} (i.e., is \gls{as} finite).
The Banach space of (equivalence classes of) $P$-integrable real random variables is denoted as $L_1(\Omega,\sigAlg,P)$, and its norm is $\norm{\cdot}_P$.
If $X$ is $P$-integrable and $\filtration\subset\sigAlg$ is a sub-$\sigma$-algebra, $\E_P[X\mid\filtration]$ denotes any (measurable) choice of the conditional expectation of $X$ given $\filtration$.
The choice of conditional expectations does not play a role in this work.
If $X\geq a$ for some constant $a\in\R$, $P$-\gls{as}, we also define $\E_P[X]$ if $X$ is not $P$-integrable as $\E_P[X] := \infty$.
Throughout, a real process (resp. nonnegative process, positive process) is a stochastic process of real (resp., nonnegative, positive) variables indexed by $\N$, $\N^2$, or a subset of one of those sets.
The exact subset is always clear from context.
Furthermore, if $X = (X_n)_{n\in\N}$ is a nonnegative process (whose trajectories do not necessarily converge, even only \gls{as} for some measure), we define \begin{equation}\label{eq:convention evaluation at infinity}
    \forall \omega\in\Omega,~X_\infty(\omega) := \limsup_{n\to\infty} X_n(\omega).
\end{equation}
It follows that $X_\infty$ is a nonnegative random variable.
If the trajectory $X(\omega)$ converges for some $\omega\in\Omega$, this notation is consistent since then $X_\infty(\omega) = \lim_{n\to\infty} X_n(\omega)$.
Finally, recall that a filtration $\filtration$ on $(\Omega,\sigAlg)$ is a nondecreasing sequence of sub-$\sigma$-algebras of $\sigAlg$.
Given a filtration $\filtration = (\filtration_n)_{n\in\N}$, we consistently define the notation $\filtration_\infty$ to denote the limit $\sigma$-algebra \begin{equation*}
    \filtration_\infty = \sigma\left(\bigcup_{n\in\N}\filtration_n\right).
\end{equation*}
Since our results involve bi-indexed processes, we use the following notation: if $a = (a_{m,n})_{m,n\in I}$ is a bi-indexed sequence indexed by a set $I\subset\N^2$, then $a_{\bullet,n}$ and $a_{m,\bullet}$ denote $(a_{j,n})_{j\in\{k\mid(k,n)\in I\}}$ and $(a_{m,i})_{i\in\{k\mid(m,k)\in I\}}$, respectively and for all $m,n\in\N$.
This enables the following definitions.
\begin{definition}[Filtration sequences and arrays]
    A \emph{filtration sequence} (along the first index, on $(\Omega,\sigAlg)$) is a family $\filtration = (\filtration_{m,n})_{m,n\in\N}$ of sub-$\sigma$-algebras of $\sigAlg$ such that $\filtration_{m,\bullet}$ is a filtration for all $m\in\N$.
    In this case, we define $ \filtration_{m,\infty} = \sigma\left(\bigcup_{i\vphantom{j}\in\N}\filtration_{m,i}\right)$ for all $m\in\N$.
    Furthermore, $\filtration$ is a \emph{filtration array} (on $(\Omega,\sigAlg)$) if both $\filtration_{m,\bullet}$ and $\filtration_{\bullet,n}$ are filtrations, for all $m,n\in\N$.
    We then also define $\filtration_{\infty,n} = \sigma\left(\bigcup_{j\in\N}\filtration_{j,n}\right)$ for all $n\in\N$.
    A process $E = (E_{m,n})_{m,n\in\N}$ is said to be \emph{adapted} to a filtration sequence or array $\filtration$ if, for all $m,n\in\N$, $E_{m,n}$ is $\filtration_{m,n}$-measurable; that is, for all $m\in\N$, $E_{m,\bullet}$ is adapted to $\filtration_{m,\bullet}$.
    Finally, the filtration sequence and array generated by a process $E = (E_{m,n})_{m,n\in\N}$ are respectively defined as \begin{equation*}
        \left(\sigma\left(\left\{E_{m,i}\mid i\in\integers n\right\}\right)\right)_{m,n\in\N},\quad\text{and}\quad\left(\sigma\left(\left\{E_{j,i}\mid j\in\integers m\land i\in\integers n\right\}\right)\right)_{m,n\in\N}.
    \end{equation*}
\end{definition}
One can immediately verify that $\filtration$ is a filtration array if, and only if, $\filtration_{m,n}\subset\filtration_{m+1,n}\cap\filtration_{m,n+1}$, and that, in this case, $\filtration_{\infty,\bullet}$ and $\filtration_{\bullet,\infty}$ are also filtrations on $(\Omega,\sigAlg)$.
All filtrations, filtration sequences, and filtration arrays we consider in this work are on $(\Omega,\sigAlg)$; hence, we omit specifying it from now on.

\paragraph*{E-variables and related notions\preprintdot}
We now introduce e-variables, e-processes
and test supermartingales.
We refer to \cite{ramdasHypothesisTestingEvalues2025} and \cite{ignatiadisAsymptoticCompoundEvalues2024} for more details.
\begin{definition}[e-variable]
\label{def:evalue}
An \emph{e-variable} (for $\subsetProbaMeasures$) is a nonnegative random variable $e$ such that $e$ is $P$-integrable and $\E_P[e]\leq 1$, for all $P\in\subsetProbaMeasures$. 
\end{definition}
\par
We briefly revisit the definition of stopping times and e-processes.
It is known that e-processes are equivalently defined irrespective of whether or not one allows stopping times to take the value $\infty$ \citep[Lemma 6]{ramdasAdmissibleAnytimevalidSequential2022}.
We will see that the same holds for asymptotic e-processes, and thus distinguish finite and possibly infinite stopping times.
\begin{definition}[Stopping time]\label{def:stopping time}
    Let $\filtration = (\filtration_n)_{n\in\N}$ be a filtration.
    An $\filtration$-stopping time is a measurable map $\tau:\Omega\to\N \cup \{\infty\}$ 
    such that, for all $n\in\N$, 
    \begin{equation*}
        \{\tau \leq n\} := \{\omega\in\Omega\mid \tau(\omega) \leq n\} \in\filtration_n,
    \end{equation*}
    and an $\filtration$-stopping time is said to be \emph{finite} if $\{\tau=\infty\} = \emptyset$.
    For all $\rho\in\N\cup\{\infty\}$, we denote as $\rBoundedStoppingTimes(\rho,\filtration,\subsetProbaMeasures)$ the set of 
    $\filtration$-stopping times $\tau$ such that \begin{equation*}
        \forall P\in\subsetProbaMeasures,~P[\tau\leq \rho] = 1,
    \end{equation*}
    with the convention that $\rBoundedStoppingTimes(\infty,\filtration,\subsetProbaMeasures)$ is the set of all $\filtration$-stopping times (possibly taking the value $\infty$).
    We also introduce $\rBoundedStoppingTimes_\frm(\rho,\filtration,\subsetProbaMeasures)$ the subset of $\rBoundedStoppingTimes(\rho,\filtration,\subsetProbaMeasures)$ consisting of finite stopping times.
\end{definition}
It follows trivially that $\rBoundedStoppingTimes_\frm(\rho,\filtration,\subsetProbaMeasures)$ and $\rBoundedStoppingTimes(\rho,\filtration,\subsetProbaMeasures)$ only differ when $\rho=\infty$.
We often omit the filtration $\filtration$ in the terminology of stopping times, as it is always clear from context.
Finally, we emphasize that our results are unchanged when one defines $\rBoundedStoppingTimes_\frm(\rho,\filtration,\subsetProbaMeasures)$ as the subset of $\rBoundedStoppingTimes(\rho,\filtration,\subsetProbaMeasures)$ consisting of stopping times that are only $P$-\gls{as} finite for all $P\in\subsetProbaMeasures$ (as opposed to finite pointwise in $\Omega$); we choose to state all results for finite stopping times only, the $P$-\gls{as} finite case being entirely analogous.
\begin{definition}[e-process]\label{def:e-process}
An \emph{e-process} (for $\subsetProbaMeasures$,
with respect to a filtration
$\filtration$)
is a nonnegative process $E = (E_n)_{n \in \N}$
adapted to $\filtration$ such that, for any $\filtration$-stopping time $\tau$, $E_{\tau}$ is an e-variable for $\subsetProbaMeasures$; that is, $\forall P\in\subsetProbaMeasures, \E_P[E_\tau]\leq 1$.
 
\end{definition}
As mentioned, the definition is unaffected by whether the stopping times are enforced to be finite or not \cite[Lemma 6]{ramdasAdmissibleAnytimevalidSequential2022}.
\begin{definition}[Supermartingale]
A \emph{supermartingale} (for $\subsetProbaMeasures$, with respect to a filtration
$\filtration$)
is a process $S = (S_n)_{n \in \N}$ adapted to $\filtration$ such that, for all $P\in\subsetProbaMeasures$ and $n\in\N$, $S_n$ is $P$-integrable and \begin{equation*}
    \E_P[S_{n+1}\mid\filtration_n]\leq S_n,\quad P\text{-\gls{as}}
\end{equation*}
It is said to be a \emph{test supermartingale} (for $\subsetProbaMeasures$, with respect to a filtration $\filtration$) if, additionally, it is nonnegative and satisfies $\E_P[S_0]\leq 1$ for all $P\in\subsetProbaMeasures$.
\end{definition}
A test supermartingale $S$ is always an e-process.
This follows from the optional sampling theorem for nonnegative supermartingales \citep[see e.g.][Theorem 10.11]{klenke2020probability}.
Finally, we recall the notion of asymptotic e-variables as introduced in \cite{ignatiadisAsymptoticCompoundEvalues2024}.
An asymptotic e-variable is a nonnegative process whose index $m\in\N$ represents the quality of approximation of a true e-variable, in some sense.
\begin{definition}[Asymptotic e-variable, \citealp{ignatiadisAsymptoticCompoundEvalues2024}]
    A nonnegative process $e = (e_m)_{m\in\N}$ is said to be a \emph{uniformly asymptotic e-variable for $\subsetProbaMeasures$} if \begin{equation}\label{eq:uniformly AEV}
        \forall t\in\Rnn,\quad\limsup_{m\to\infty} \sup_{P\in\subsetProbaMeasures}\E_P[e_m\pwMin t]\leq 1.
    \end{equation}
    Furthermore, it is said to be a \emph{uniformly strongly asymptotic e-variable for $\subsetProbaMeasures$} if \begin{equation}
        \limsup_{m\to\infty} \sup_{P\in\subsetProbaMeasures}\E_P[e_m]\leq 1.
    \end{equation}
\end{definition}
We emphasize that, in the original work of \cite{ignatiadisAsymptoticCompoundEvalues2024}, they are called \emph{sequences} of uniformly (strongly) asymptotic e-variables.
We instead favor the terminology ``asymptotic e-variable'' for the full sequence.
\par
We also emphasize that \cref{eq:uniformly AEV} is equivalent to the existence of a sequence $(t_m)_{m\in\N}\subset\Rnn$ such that \begin{equation}
    \lim_{m\to\infty}t_m = \infty,\quad\text{and}\quad \limsup_{m\to\infty} \sup_{P\in\subsetProbaMeasures}\E_P[e_m\pwMin t_m]\leq 1.
\end{equation}
This characterization follows from \cref{clry:rate of uniform boundedness 2}, and proves useful in the statement of later results.

\paragraph*{Convergence of bi-indexed processes in $L_1$\preprintdot}
We briefly address the convergence of a bi-indexed real process $E = (E_{m,n})_{m,n\in\N}$ to another real process $F = (F_n)_{n\in\N}$.
The convergence we require is pointwise in $n\in\N$, and should occur in $L_1$, uniformly over $P\in\subsetProbaMeasures$.
We formalize it in the following definitions to facilitate the statement of later results.
To that end, we remind the reader that a real process $(X_\lambda)_{\lambda\in\Lambda}$ indexed by an arbitrary set $\Lambda$ is said to be $P$-integrable if $X_\lambda$ is $P$-integrable for all $\lambda\in\Lambda$, where $P\in\probaMeasures(\Omega)$, and it is $\subsetProbaMeasures$-integrable if it is $P$-integrable for all $P\in\subsetProbaMeasures$.
\begin{definition}\label{def:l1 convergence of bi-indexed process}
    Let $E = (E_{m,n})_{m,n\in\N}$ and $F = (F_n)_{n\in\N}$ be nonnegative processes.
    We say that \emph{$E$ converges to $F$ in $L_1$ uniformly in $\subsetProbaMeasures$ (along the first index)} if the following conditions hold:
    \begin{enumerate}[label=(\roman*)]
        \item \label{item:l1 convergence integrability} $E$ and $F$ are $\subsetProbaMeasures$-integrable;
        \item for all $n\in\N$, $E_{m,n}\to F_n$ as $m\to\infty$, where convergence is in $L_1(\Omega,\sigAlg,P)$ uniformly in $P\in\subsetProbaMeasures$; that is, \begin{equation*}
            \lim_{m\to\infty}\sup_{P\in\subsetProbaMeasures}\E_P\left[\lvert E_{m,n} - F_n\rvert\right] = 0.
        \end{equation*}
    \end{enumerate}
\end{definition}
Finally, some of our results prompt us to take sums and differences between processes taking values in the extended real line.
This is problematic when they take the value $\pm\infty$, as this leads to expressions of the form $\infty-\infty$, which are undefined.
We address this preemptively by saying that such sums are only taken in the context of processes that converge in $L_1$ in the sense of \cref{def:l1 convergence of bi-indexed process}.
In particular, the variables involved are integrable, and thus \gls{as} finite, which enables understanding these sums \gls{as} with respect to the measure at hand.
Such expressions should then always be understood in this \gls{as} sense.
\par
We also point out that the integrability condition in \cref{def:l1 convergence of bi-indexed process}\labelcref{item:l1 convergence integrability} may be imposed only for all $m\geq m_0$, with $m_0\in\N$ independent of $P\in\subsetProbaMeasures$, without affecting our results involving such convergence in $L_1$.
Nevertheless, this comes with a notational burden since the difference between processes is then only unambiguously defined for $m\geq m_0$ as per the remark right above.
For this reason, we stick to the simpler condition in \cref{def:l1 convergence of bi-indexed process}\labelcref{item:l1 convergence integrability}.

\section{Asymptotic e-processes}
\label{sec:asymptotic-e-processes}
We now formally define two notions of asymptotic
e-processes.
To that end, we overload the symbols $\rBoundedStoppingTimes$ and $\rBoundedStoppingTimes_\frm$ introduced in \cref{def:stopping time} to accept as their first two arguments sequences $r = (r_m)_{m\in\N}\subset\N\cup\{\infty\}$ and filtration sequences $\filtration = (\filtration_{m,n})_{m,n\in\N}$, and define $\rBoundedStoppingTimes(r,\filtration,\subsetProbaMeasures)$ (resp.\ $\rBoundedStoppingTimes_\frm(r,\filtration,\subsetProbaMeasures)$) as the set of sequences $\tau = (\tau_m)_{m\in\N}$ such that, for all $m\in\N$, $\tau_m \in\rBoundedStoppingTimes(r_m,\filtration_{m,\bullet},\subsetProbaMeasures)$ (resp.\ $\tau_m \in\rBoundedStoppingTimes_\frm(r_m,\filtration_{m,\bullet},\subsetProbaMeasures)$).
As announced, we first define asymptotic e-processes using $\rBoundedStoppingTimes$.
\begin{definition}[Uniformly $r$-asymptotic e-process]
    \label{def:asymptotic e-process}
    Let $E = (E_{m,n})_{m,n\in\N}$ be a nonnegative process adapted to a filtration sequence $\filtration$ and $r$ be a sequence of extended integers.
    We say that $E$ is a \begin{enumerate}[label=(\roman*)]
        \item \emph{uniformly $r$-asymptotic e-process ($r$-\glsps{aep}\glsunset{aep})} (for $\subsetProbaMeasures$ and for $\filtration$) if for all $\tau=(\tau_m)_{m\in\N}\in\rBoundedStoppingTimes(r,\filtration,\subsetProbaMeasures)$, the process $(E_{m,\tau_m})_{m\in\N}$ is a uniformly asymptotic e-variable for $\subsetProbaMeasures$; that is, \begin{equation*}
        \forall t\in\Rnn,~\forall \tau=(\tau_m)_{m\in\N}\in\rBoundedStoppingTimes(r,\filtration,\subsetProbaMeasures), \quad\limsup_{m\to\infty}\sup_{P\in\subsetProbaMeasures}\E_P[E_{m,\tau_m}\pwMin t]\leq 1,
    \end{equation*}
        \item \emph{uniformly strongly $r$-asymptotic e-process ($r$-\glsps{saep}\glsunset{saep})} (for $\subsetProbaMeasures$ and for $\filtration$) if for all $\tau=(\tau_m)_{m\in\N}\in\rBoundedStoppingTimes(r,\filtration,\subsetProbaMeasures)$, the process $(E_{m,\tau_m})_{m\in\N}$ is a uniformly strongly asymptotic e-variable for $\subsetProbaMeasures$; that is, \begin{equation*}
        \forall \tau=(\tau_m)_{m\in\N}\in\rBoundedStoppingTimes(r,\filtration,\subsetProbaMeasures), \quad\limsup_{m\to\infty}\sup_{P\in\subsetProbaMeasures}\E_P[E_{m,\tau_m}]\leq 1.
    \end{equation*}
    \end{enumerate}
\end{definition}
This definition warrants two remarks.

\begin{remark}[Terminology]
The qualifier ``uniformly'' above refers to uniformity in $P\in\subsetProbaMeasures$.
Previous works have made clear the importance of such uniformity in the context of hypothesis testing; see mainly \citet{waudby-smithDistributionuniformAnytimevalidSequential2026}.
For that reason, we always consider such uniformity in this paper, and thus omit it from the acronyms $r$-\gls{aep} and $r$-\gls{saep} as well as in the discussion.
Furthermore, we generally use the expression ``asymptotic e-process'' when referring to either an $r$-\gls{aep} or an $r$-\gls{saep} when the specific sequence $r$ and which notion exactly is considered are not relevant to the argument at hand.
In the interest of rigor, however, formal statements always use the expressions ``uniformly $r$-\gls{aep}'' or ``uniformly $r$-\gls{saep}''.
\end{remark}

\begin{remark}[Relationship between $r$-\glspl{aep} and $r$-\glspl{saep}]
    An $r$-\gls{saep} is always an $r$-\gls{aep}.
    However, the $r$-\gls{saep} property 
    is more restrictive than the $r$-\gls{aep} property.
    To see this, assume that we have an $r$-\gls{saep} $E$, and introduce a sequence $(A_m)_{m\in\N}\subset\sigAlg$ such that $\inf_{P\in\subsetProbaMeasures} P[A_m] \to 1$ as $m\to\infty$ but $P[A_m] < 1$ for all $P\in\subsetProbaMeasures$ and $m\in\N$, assuming existence.
    One straightforwardly verifies that the process $\bar E$ defined for all $m,n\in\N$ as \begin{equation}
    \bar E_{m,n} = E_{m,n}\cdot\indicator_{A_m} + \infty\cdot\indicator_{A_m^\complement}
    \end{equation}
    is an $r$-\gls{aep}, but is not an $r$-\gls{saep}; in particular, it is never integrable for any value of $m$ and $n$ in $\N$.
    This construction of $r$-\glspl{aep} is generalized in \cref{subsec:event partitioning}.
\end{remark}
\par
We now interpret \cref{def:asymptotic e-process} intuitively.
Comparing the condition for $r$-\glspl{saep} to that for e-processes (\cref{def:e-process}), it becomes immediately clear that the former is a relaxation of the latter imposing the condition only asymptotically in the approximation index $m$.
A key difference, however, is the introduction of the sequence $r$; $r$-\glspl{saep} only control the behavior of stopping times bounded by $r_m$ at index $m\in\N$.
Intuitively, for a given approximation index $m\in\N$, the statistician is ``allowed'' to monitor the process $E_{m,\bullet}$ arbitrarily, but only up to time $r_m$.
It is thus desirable that $r_m\to\infty$ as $m\to\infty$, as quickly as possible.
As we show on concrete examples in \cref{subsec:cumulative-product}, the divergence speed of $r$ is directly linked to how quickly the process approximates a true e-process.
Specifically, when constructing an $r$-\gls{saep} from asymptotic e-variables, the faster these approximate true e-variables, the faster the sequence $r$ is allowed to increase.
The difference between $r$-\glspl{saep} and $r$-\glspl{aep} is that the latter require an additional (arbitrary) truncation to satisfy the asymptotic property.
This enables handling for instance processes that would be $r$-\glspl{saep} if it were not for a sequence of events with uniformly vanishing probability on which they are ill-behaved (e.g., grow unbounded, preventing integrability), as we illustrate in \cref{subsec:event partitioning}.
Intuitively, the relation between $r$-\glspl{aep} and $r$-\glspl{saep} mirrors that between uniformly asymptotic e-variables and their strong version; we refer to \citet{ignatiadisAsymptoticCompoundEvalues2024} for more details on this topic.
\par
In order to discuss the relation between \cref{def:asymptotic e-process} and the recent studies of \cite{waudby-smithDistributionuniformAnytimevalidSequential2026} and \citet{chuggPostHocLargeSampleStatistical2026}, the following characterizations are helpful.
\begin{restatable}{proposition}{characterizationUniformStoppingTime}\label{prop:characterization uniform in stopping time}
    Let $E = (E_{m,n})_{m,n\in\N}$ be a nonnegative process adapted to a filtration sequence $\filtration$ and $r = (r_m)_{m\in\N}$ be a sequence of extended integers.
    Then, $E$ is a uniformly $r$-\gls{saep} for $\subsetProbaMeasures$ and $\filtration$ if, and only if, \begin{equation*}
        \limsup_{m\to\infty}\sup_{\tau\in\rBoundedStoppingTimes(r_m,\filtration_{m,\bullet},\subsetProbaMeasures)}\sup_{P\in\subsetProbaMeasures}\E_P[E_{m,\tau}]\leq 1.
    \end{equation*}
    Furthermore, $E$ is a uniformly $r$-\gls{aep} for $\subsetProbaMeasures$ and $\filtration$ if, and only if, there exists a sequence $(t_m)_{m\in\N}\subset\Rnn$ such that $t_m\to\infty$ as $m\to\infty$ and \begin{equation}\label{eq:characterization r-AEP}
        \limsup_{m\to\infty}\sup_{\tau\in\rBoundedStoppingTimes(r_m,\filtration_{m,\bullet},\subsetProbaMeasures)}\sup_{P\in\subsetProbaMeasures}\E_P[E_{m,\tau}\pwMin t_m]\leq 1,
    \end{equation}
    that is, if, and only if, $(E_{m,n}\pwMin t_m)_{m,n\in\N}$ is a uniformly $r$-\gls{saep} for $\subsetProbaMeasures$ and $\filtration$.
    Such a sequence $t$ is called a \emph{truncation sequence} of $E$.
\end{restatable}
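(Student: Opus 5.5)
The plan is to reduce both equivalences to a single contradiction argument that turns a per-$m$ near-maximizing stopping time into an element of $\rBoundedStoppingTimes(r,\filtration,\subsetProbaMeasures)$. This works because $\rBoundedStoppingTimes(r,\filtration,\subsetProbaMeasures)$ is a product set: its elements are arbitrary sequences $(\tau_m)$ with $\tau_m\in\rBoundedStoppingTimes(r_m,\filtration_{m,\bullet},\subsetProbaMeasures)$, chosen independently for each $m$. Throughout, write
\[
a_m := \sup_{\tau\in\rBoundedStoppingTimes(r_m,\filtration_{m,\bullet},\subsetProbaMeasures)}\sup_{P\in\subsetProbaMeasures}\E_P[E_{m,\tau}]\in[0,\infty].
\]

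\emph{First equivalence ($r$-\gls{saep}).}
\begin{enumerate}[label=(\alph*)]
    \item The ``if'' direction is immediate. For any $\tau=(\tau_m)_{m\in\N}\in\rBoundedStoppingTimes(r,\filtration,\subsetProbaMeasures)$ we have $\sup_P\E_P[E_{m,\tau_m}]\le a_m$ for every $m$, so taking $\limsup_{m\to\infty}$ gives the defining inequality.
    \item For the ``only if'' direction, argue by contradiction and suppose $\limsup_m a_m>1$. Then there are $c>1$ and an infinite set $J\subset\N$ with $a_m>c$ for all $m\in J$; this includes the case $a_m=\infty$.
    \item For each $m\in J$, the definition of the supremum yields some $\tau_m\in\rBoundedStoppingTimes(r_m,\filtration_{m,\bullet},\subsetProbaMeasures)$ with $\sup_P\E_P[E_{m,\tau_m}]>c$.
    \item For $m\notin J$, set $\tau_m\equiv 0$. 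This is a stopping time, and it lies in $\rBoundedStoppingTimes(r_m,\filtration_{m,\bullet},\subsetProbaMeasures)$ since $0\le r_m$.
    \item The resulting sequence $\tau$ belongs to $\rBoundedStoppingTimes(r,\filtration,\subsetProbaMeasures)$, yet $\limsup_m\sup_P\E_P[E_{m,\tau_m}]\ge c>1$. This contradicts the $r$-\gls{saep} property.
\end{enumerate}

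\emph{Second equivalence ($r$-\gls{aep}).} First note that the final reformulation, ``$(E_{m,n}\pwMin t_m)_{m,n\in\N}$ is an $r$-\gls{saep}'', is just the first equivalence applied to the process $E_{m,n}\pwMin t_m$. That process is nonnegative and adapted to $\filtration$ because $t_m$ is deterministic. So it suffices to prove that the $r$-\gls{aep} property is equivalent to \cref{eq:characterization r-AEP}.
\begin{enumerate}[label=(\alph*)]
    \item For the ``if'' direction, fix $t\in\Rnn$ and $\tau\in\rBoundedStoppingTimes(r,\filtration,\subsetProbaMeasures)$. Since $t_m\to\infty$, we have $t_m\ge t$ for all large $m$, hence $E_{m,\tau_m}\pwMin t\le E_{m,\tau_m}\pwMin t_m$.
    \item Bounding the right-hand side of (a) by its supremum over stopping times and over $P$, and then applying \cref{eq:characterization r-AEP}, gives $\limsup_m\sup_P\E_P[E_{m,\tau_m}\pwMin t]\le 1$.
    \item For the ``only if'' direction, set $b_m(t):=\sup_{\tau}\sup_P\E_P[E_{m,\tau}\pwMin t]$. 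For each fixed $t$, the process $E\pwMin t$ is an $r$-\gls{saep} by definition of $E$ being an $r$-\gls{aep}. The first part then gives $\limsup_m b_m(t)\le 1$ for every $t$.
    \item Construct $t_m$ by a diagonal argument, using that $b_m(t)$ is nondecreasing in $t$.
    \begin{itemize}
        \item For each $k\in\Np$, choose $M_k$ with $b_m(k)\le 1+1/k$ for all $m\ge M_k$, taking $M_1<M_2<\dots$ strictly increasing.
        \item Set $t_m:=k$ for $M_k\le m<M_{k+1}$, and $t_m:=0$ for $m<M_1$.
    \end{itemize}
    \item With this choice, $t_m\to\infty$. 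Moreover, for $M_k\le m<M_{k+1}$ we have $b_m(t_m)=b_m(k)\le 1+1/k$, so $\limsup_m b_m(t_m)\le 1$, which is exactly \cref{eq:characterization r-AEP}. Alternatively, this step is precisely the kind of statement of \cref{clry:rate of uniform boundedness 2}, which could be invoked directly with $b_m(\cdot)$ in place of a single expectation.
\end{enumerate}

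\emph{Main obstacle.} The only genuinely non-routine steps are the two extractions.
\begin{itemize}
    \item In the first part, one must build a sequence of stopping times from per-$m$ near-maximizers. This requires the product structure of $\rBoundedStoppingTimes(r,\filtration,\subsetProbaMeasures)$ and a valid filler ($\tau_m\equiv0$) for the indices outside $J$.
    \item In the second part, one must pass from ``for every fixed $t$'' to a single divergent truncation sequence. The diagonal construction handles this, and it relies on monotonicity of $b_m$ in $t$ only to keep the bound valid along each block.
\end{itemize}
I expect no integrability issues to arise: all expectations are of nonnegative variables, with values in $[0,\infty]$, so every inequality remains valid with infinite values.
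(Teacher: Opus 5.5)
Your proposal is correct and follows essentially the same route as the paper. For the first equivalence, both build a sequence of per-$m$ near-maximizing stopping times, exploiting the product structure of $\rBoundedStoppingTimes(r,\filtration,\subsetProbaMeasures)$. For the second, both use the monotonicity of $t\mapsto\sup_\tau\sup_P\E_P[E_{m,\tau}\pwMin t]$ together with the diagonal argument of \cref{clry:rate of uniform boundedness 2}. Your contradiction formulation is slightly more careful than the paper's $1/(m+1)$-near-maximizer selection, because it also covers indices where the supremum is infinite. Note also that your explicit diagonal construction never uses monotonicity; it is needed only in the ``if'' direction.
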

We are now equipped to discuss how \cref{def:asymptotic e-process} relates to the recent works of \citet{waudby-smithDistributionuniformAnytimevalidSequential2026} and \citet{chuggPostHocLargeSampleStatistical2026}, which introduce related concepts.
Specifically, ``asymptotic e-processes'' as defined in \citet[Definition 4.2]{chuggPostHocLargeSampleStatistical2026} correspond to what \cref{def:asymptotic e-process} calls $r$-\glspl{saep} with $r_m=\infty$ for all $m\in\N$, up to a reindexing convention on the second index and the fact that we allow a dependency of the stopping time on $m$, which is necessary in our more general setting where the filtration itself depends on $m$.
Furthermore, the $r$-\gls{saep} that \citet{chuggPostHocLargeSampleStatistical2026} construct in Proposition 4.4 is based on an $r$-\gls{aep} thresholded by an appropriate sequence $(t_m)_{m\in\N}$.
\Cref{prop:characterization uniform in stopping time} supports the generality of this construction.
Similarly, \citet{waudby-smithDistributionuniformAnytimevalidSequential2026} introduce $\subsetProbaMeasures$-uniform anytime p-values; we see in \cref{subsec:anytime p-values} that they relate to $r$-\glspl{aep} with $r_m=\infty$ for all $m\in\N$ via p-to-e calibration.
Summarizing, \cref{def:asymptotic e-process} is a strict generalization of the concepts already introduced in the literature, in two aspects: it allows general sequences $r$ instead of imposing $r_m=\infty$ for all $m\in\N$ in the definition, and it introduces the distinction between the different strengths of the asymptotic regime based on the analysis of \citet{ignatiadisAsymptoticCompoundEvalues2024}.
The contribution of this work then resides in analyzing the properties and construction methods of processes satisfying \cref{def:asymptotic e-process}.
In particular, one of our main findings is that allowing $r_m<\infty$ for all $m\in\N$ does not result in weaker \emph{asymptotic} guarantees as long as $r_m\to\infty$ as $m\to\infty$.
In fact, \cref{sec:limiting behavior} shows that this is the necessary and sufficient property capturing ``convergence to an e-process'' in the $m$-index.
\par
This discussion highlights the special role played by $r$-\glspl{aep} and $r$-\glspl{saep} with $r_m=\infty$ for all $m\in\N$.
For that reason, by abuse of notation, we denote them as $\infty$-\glspl{aep} and $\infty$-\glspl{saep}, where it is understood that the symbol $\infty$ stands in lieu of the sequence $r$ constant equal to $\infty$.
Furthermore, given the announced relevance of the case $r_m\to\infty$ as $m\to\infty$ for asymptotic properties, we provide the following simple characterization.
\begin{restatable}{proposition}{horizon}
    \label{prop:horizon of uniformly asymptotic e-processes}
    Let $E = (E_{m,n})_{m,n\in\N}$ be a nonnegative process adapted to a filtration sequence $\filtration = (\filtration_{m,n})_{m,n\in\N}$.
    The following statements are equivalent:
    \begin{enumerate}[label=(\roman*)]
        \item \label{item:horizon of uniformly asymptotic e-processes:bounded horizon} For any bounded integer sequence $r$, $E$ is a uniformly $r$-\gls{aep} (resp., $r$-\gls{saep}) for $\subsetProbaMeasures$ and $\filtration$.
        \item \label{item:horizon of uniformly asymptotic e-processes:diverging horizon} There exists an integer sequence $r = (r_m)_{m\in\N}$ such that $r_m\to\infty$ as $m\to\infty$ and $E$ is a uniformly $r$-\gls{aep} (resp., $r$-\gls{saep}) for $\subsetProbaMeasures$ and $\filtration$.
    \end{enumerate}
\end{restatable}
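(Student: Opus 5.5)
The plan is to prove both implications by monotonicity in $r$, plus a diagonal argument, with one template handling both the $r$-\gls{aep} and the $r$-\gls{saep} versions. The key tool is \cref{prop:characterization uniform in stopping time}, which turns the property into a statement about the sequence of worst-case quantities. For the strong case set
\begin{equation*}
    a_m(\rho) := \sup_{\tau\in\rBoundedStoppingTimes(\rho,\filtration_{m,\bullet},\subsetProbaMeasures)}\sup_{P\in\subsetProbaMeasures}\E_P[E_{m,\tau}],
\end{equation*}
which is nondecreasing in $\rho$ because $\rBoundedStoppingTimes(\rho,\cdot,\cdot)\subset\rBoundedStoppingTimes(\rho',\cdot,\cdot)$ for $\rho\leq\rho'$. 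In this notation, $E$ is an $r$-\gls{saep} if and only if $\limsup_m a_m(r_m)\leq1$.

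For \labelcref{item:horizon of uniformly asymptotic e-processes:diverging horizon}$\Rightarrow$\labelcref{item:horizon of uniformly asymptotic e-processes:bounded horizon}, let $r'$ be bounded by some $K$. Since $r_m\to\infty$, we have $r'_m\leq K\leq r_m$ for all $m\geq m_0$. Hence $a_m(r'_m)\leq a_m(r_m)$ for those $m$, and the limsup bound transfers. In the \gls{aep} case, apply the same monotonicity to the truncated quantities $\E_P[E_{m,\tau}\pwMin t]$ for each fixed $t$, working directly with the definition. Alternatively, reuse the truncation sequence of $E$ for $r$ as a truncation sequence for $r'$.

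For \labelcref{item:horizon of uniformly asymptotic e-processes:bounded horizon}$\Rightarrow$\labelcref{item:horizon of uniformly asymptotic e-processes:diverging horizon}, first treat the strong case. Applying (i) with the constant sequence $r\equiv k$ gives $\limsup_m a_m(k)\leq 1$ for every $k\in\N$. So there exist indices $M_k$, which can be chosen strictly increasing, such that $a_m(k)\leq 1+1/k$ for all $m\geq M_k$. Define $r_m := k$ for $M_k\leq m<M_{k+1}$, and $r_m:=0$ for $m<M_0$. Then $r_m\to\infty$, and for $m\in[M_k,M_{k+1})$ we have $a_m(r_m)=a_m(k)\leq 1+1/k$. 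Since $k\to\infty$ as $m\to\infty$, this gives $\limsup_m a_m(r_m)\leq 1$, and \cref{prop:characterization uniform in stopping time} concludes.

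The \gls{aep} case is the main obstacle, because the condition holds for every $t$ rather than for a single quantity, so a double diagonalization is needed. I would define
\begin{equation*}
    b_m(k) := \sup_{\tau\in\rBoundedStoppingTimes(k,\filtration_{m,\bullet},\subsetProbaMeasures)}\sup_{P\in\subsetProbaMeasures}\E_P[E_{m,\tau}\pwMin k],
\end{equation*}
coupling the horizon and the threshold. Statement (i) with $r\equiv k$, via \cref{prop:characterization uniform in stopping time}, gives a truncation sequence $t^{(k)}\to\infty$ with $\limsup_m \sup\sup\E_P[E_{m,\tau}\pwMin t^{(k)}_m]\leq1$. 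Eventually $t^{(k)}_m\geq k$, and since $x\mapsto x\pwMin t$ is nondecreasing in $t$, it follows that $\limsup_m b_m(k)\leq 1$. Diagonalizing as above yields $r_m=k$ on $[M_k,M_{k+1})$ with $b_m(r_m)\leq 1+1/r_m$.

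Then set $t_m := r_m$. Since $\sup_{\tau\leq r_m}\sup_P\E_P[E_{m,\tau}\pwMin t_m] = b_m(r_m)$, the sequence $t$ is a truncation sequence for the horizon $r$, and the converse direction of \cref{prop:characterization uniform in stopping time} gives that $E$ is an $r$-\gls{aep}. The only care needed is to check that the elementary "eventually below $1+1/k$" choices are uniform in $P$ and $\tau$; this holds because the suprema sit inside $a_m$ and $b_m$.
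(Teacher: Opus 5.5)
Your proof is correct and follows essentially the same route as the paper. For (ii)$\Rightarrow$(i) both use monotonicity of the worst-case quantity in the horizon. For (i)$\Rightarrow$(ii) both apply a diagonal argument (the paper's \cref{lemma:rate of uniform boundedness}, which you re-derive inline) to the quantity coupling horizon and threshold via $t=n$, and then take $t_m:=r_m$ as the truncation sequence in \cref{prop:characterization uniform in stopping time}. The only cosmetic difference is how you get $\limsup_m b_m(k)\le 1$: you go through a truncation sequence $t^{(k)}$ and monotonicity in $t$, whereas the paper obtains it directly from the definition at $t=k$ together with the strong part of \cref{prop:characterization uniform in stopping time} applied to $E\pwMin k$.
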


\subsection{Characterizations with finite stopping times or nonnegative partial supermartingales}

Two important theoretical results in the theory of e-processes are (i) that it suffices to verify the defining inequality over finite stopping times (this is equivalent to allowing stopping times that may take the value $\infty$), and (ii) that e-processes are precisely the nonnegative processes that are dominated by a suitably normalized nonnegative supermartingale \citep[Lemma 6]{ramdasAdmissibleAnytimevalidSequential2022}.
In this section, we show in what sense these useful characterizations extend to the asymptotic setting, beginning with the case of $r$-\glspl{saep}.
The proof of this result is similar to that of the nonasymptotic case \cite[Lemma 6]{ramdasAdmissibleAnytimevalidSequential2022}.
\begin{restatable}{theorem}{characterizationSAEP}\label{thm:characterizations r-SAEP}
    Let $\filtration$ be a filtration sequence, $E = (E_{m,n})_{m,n\in\N}$ be a nonnegative process adapted to $\filtration$, and $r$ be an extended integer sequence.
    The following statements are equivalent:\begin{enumerate}[label=(\roman*)]
        \item \label{item:characterizations r-SAEP:i} $E$ is a uniformly $r$-\gls{saep} for $\subsetProbaMeasures$ and $\filtration$;
        \item \label{item:characterizations r-SAEP:ii} For any $\tau = (\tau_m)_{m\in\N}\in\rBoundedStoppingTimes_\frm(r,\filtration,\subsetProbaMeasures)$, the process $(E_{m,\tau_m})_{m\in\N}$ is a uniformly strongly asymptotic e-variable;
        \item \label{item:characterizations r-SAEP:iii} There exists a family $L=(L_{m,n}^P)_{(m,n,P)\in\N\times\N\times\subsetProbaMeasures}$ of nonnegative variables with the following properties:
        \begin{enumerate}
            \item \label{item:characterizations r-SAEP:npsms upper bound}for all $m,n\in\N$ and $P\in\subsetProbaMeasures$, $L_{m,n}^P\geq E_{m,n}$, $P$-\gls{as},
            \item \label{item:characterizations r-SAEP:npsms integrability} there exists $m_0\in\N$ such that $L_{m,n}^P$ is $P$-integrable for all $m\geq m_0$, $n\in\integers{r_{m}}$, and $P\in\subsetProbaMeasures$,
            \item \label{item:characterizations r-SAEP:npsms supermartingale} for all $m\geq m_0$, $L_{m,\bullet}^P$ is a supermartingale until index $r_m-1$; that is, \begin{equation*}
                \forall n\in\integers{r_m-1},\quad\E_P[L_{m,n+1}^P\mid\filtration_{m,n}] \leq L_{m,n}^P,\quad P\text{-\gls{as}}
            \end{equation*}
            \item \label{item:characterizations r-SAEP:npsms calibration} $\limsup_{m\to\infty}\sup_{P\in\subsetProbaMeasures}\E_{P}[L_{m,0}^P]\leq 1$.
        \end{enumerate}
    \end{enumerate}
\end{restatable}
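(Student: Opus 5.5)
We prove the cycle \ref{item:characterizations r-SAEP:i}$\Rightarrow$\ref{item:characterizations r-SAEP:ii}$\Rightarrow$\ref{item:characterizations r-SAEP:iii}$\Rightarrow$\ref{item:characterizations r-SAEP:i}. The first implication is immediate, because $\rBoundedStoppingTimes_\frm(r,\filtration,\subsetProbaMeasures)\subset\rBoundedStoppingTimes(r,\filtration,\subsetProbaMeasures)$.

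For \ref{item:characterizations r-SAEP:ii}$\Rightarrow$\ref{item:characterizations r-SAEP:iii} we follow \citet[Lemma 6]{ramdasAdmissibleAnytimevalidSequential2022} and take $L$ to be a Snell envelope. Fix $m$ and $P$. For $n\in\integers{r_m}$ with $n<\infty$, define
\[
L_{m,n}^P:=\operatorname*{ess\,sup}_{\tau}\E_P[E_{m,\tau}\mid\filtration_{m,n}],
\]
where the essential supremum runs over finite $\filtration_{m,\bullet}$-stopping times $\tau$ with $n\le\tau\le r_m$. For indices beyond $r_m$ (when $r_m<\infty$), set $L_{m,n}^P:=\infty\cdot\indicator_{\{E_{m,n}>0\}}+E_{m,n}$, or simply $\infty$; no integrability or supermartingale property is required there. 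Property \ref{item:characterizations r-SAEP:npsms upper bound} holds because $\tau=n$ is admissible.

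The supermartingale property \ref{item:characterizations r-SAEP:npsms supermartingale} follows from the standard lattice property: the family $\{\E_P[E_{m,\tau}\mid\filtration_{m,n}]\}$ is upward directed, since two stopping times can be pasted on a $\filtration_{m,n}$-measurable set. Hence the essential supremum is an increasing limit along a sequence $\tau_k$. Monotone convergence of conditional expectations then gives $\E_P[L_{m,n+1}^P\mid\filtration_{m,n}]\le L_{m,n}^P$, because stopping times that are $\ge n+1$ are also $\ge n$.

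For the calibration \ref{item:characterizations r-SAEP:npsms calibration} and the integrability \ref{item:characterizations r-SAEP:npsms integrability}, apply the same upward-directedness at $n=0$. Together with triviality-free monotone convergence, this gives
\[
\E_P[L^P_{m,0}]=\sup_{\tau\in\rBoundedStoppingTimes_\frm(r_m,\filtration_{m,\bullet},\subsetProbaMeasures)}\E_P[E_{m,\tau}].
\]
We would like to conclude with \cref{prop:characterization uniform in stopping time}, but it is stated with $\rBoundedStoppingTimes$ rather than $\rBoundedStoppingTimes_\frm$. So we first redo the uniformity argument in \ref{item:characterizations r-SAEP:ii}: if $\limsup_m\sup_{\tau,P}\E_P[E_{m,\tau}]>1$, pick for each $m$ (along a subsequence) a near-maximizing finite $\tau_m$ and $P_m$. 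Along the complementary indices take $\tau_m=0$. This yields a sequence in $\rBoundedStoppingTimes_\frm(r,\filtration,\subsetProbaMeasures)$ violating \ref{item:characterizations r-SAEP:ii}.

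Hence there are $m_0$ and $\epsilon<\infty$ with $\sup_{\tau,P}\E_P[E_{m,\tau}]\le 1+\epsilon$ for $m\ge m_0$. This gives integrability of $L^P_{m,0}$, and then of $L^P_{m,n}$ for $n\in\integers{r_m}$ by the supermartingale inequality. It also gives $\limsup_m\sup_P\E_P[L^P_{m,0}]\le1$. One subtlety is that $r_m$ must be attained by finite times; for $r_m=\infty$, finite stopping times suffice in the essential supremum.

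For \ref{item:characterizations r-SAEP:iii}$\Rightarrow$\ref{item:characterizations r-SAEP:i}, take $\tau\in\rBoundedStoppingTimes(r,\filtration,\subsetProbaMeasures)$ and $m\ge m_0$. If $r_m<\infty$, optional stopping for the bounded stopping time $\tau_m\le r_m$ applied to the integrable supermartingale $L^P_{m,\bullet}$ on $\integers{r_m}$ gives $\E_P[E_{m,\tau_m}]\le\E_P[L^P_{m,\tau_m}]\le\E_P[L^P_{m,0}]$.

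If $r_m=\infty$, $L^P_{m,\bullet}$ is a nonnegative supermartingale, so it converges $P$-a.s. On $\{\tau_m=\infty\}$ we have $E_{m,\infty}=\limsup_n E_{m,n}\le\lim_n L^P_{m,n}$. The optional sampling theorem for nonnegative supermartingales \citep[Theorem 10.11]{klenke2020probability}, or Fatou applied to $\tau_m\wedge N$, gives the same bound. Taking the supremum over $P$ and the $\limsup$ in $m$ concludes.

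The main obstacle is the measure-theoretic care in the Snell-envelope step: upward directedness, and handling $r_m=\infty$ where the envelope is taken over unbounded finite times. There we would rely on Fatou via truncations $\tau\wedge N$ to justify both the supermartingale inequality and the identity for $\E_P[L^P_{m,0}]$.
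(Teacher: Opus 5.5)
Your proposal is correct and follows the paper's proof step for step. The shared ingredients are: the Snell envelope truncated to finite stopping times in $[n,r_m]$; closure under pasting on an $\filtration_{m,n}$-measurable set, combined with monotone convergence, for the supermartingale and calibration properties; a separate uniformity argument over finite stopping times to obtain a $P$-independent $m_0$ (the paper isolates this as an auxiliary lemma mirroring \cref{prop:characterization uniform in stopping time}); and optional sampling via Fatou on $\tau\wedge N$ for (iii)$\Rightarrow$(i).

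The remaining differences are cosmetic. The paper sets $L^P_{m,n}=E_{m,n}$ outside the range $m\ge m_0$, $n\le r_m$, which avoids extended-valued conditional expectations. It also handles finite and infinite $r_m$ in one stroke via the stopped process $L^P_{m,n\wedge r_m}$ rather than splitting into cases.
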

\begin{remark}
    In the nonasymptotic case, processes similar to $L$ as in \cref{thm:characterizations r-SAEP}\labelcref{item:characterizations r-SAEP:iii} are introduced in \cite{ramdasAdmissibleAnytimevalidSequential2022} as \emph{$Q$-nonnegative supermartingales}, with $Q\in\subsetProbaMeasures$, though with stronger properties that relate them to e-processes. Processes satisfying \cref{thm:characterizations r-SAEP}\labelcref{item:characterizations r-SAEP:iii} are their counterparts for asymptotic e-processes, and could thus be named ``$(Q,r)$-nonnegative partial supermartingale sequences'', with $Q\in\subsetProbaMeasures$.
\end{remark}
It follows immediately from the definition that the characterization of $r$-\glspl{saep} by finite stopping times also holds for $r$-\glspl{aep} by applying \cref{thm:characterizations r-SAEP} to the $r$-\gls{saep} $E\pwMin t := (E_{m,n}\pwMin t)_{m,n\in\N}$ for all fixed $t\in\Rnn$.
This results in the associated family $(L_{m,n}^P)_{(m,n,P)\in\N\times\N\times\subsetProbaMeasures}$ depending on the value of $t\in\Rnn$, making it less useful than in the case of $r$-\glspl{saep}.
We summarize this in the following result.
\begin{restatable}{corollary}{characterizationAEP}\label{clry:characterizations r-AEP}
    Let $\filtration$ be a filtration sequence, $E = (E_{m,n})_{m,n\in\N}$ be a nonnegative process adapted to $\filtration$, and $r$ be an extended integer sequence.
    The following statements are equivalent:\begin{enumerate}[label=(\roman*)]
        \item \label{item:characterizations r-AEP:i} $E$ is a uniformly $r$-\gls{aep} for $\subsetProbaMeasures$ and $\filtration$;
        \item \label{item:characterizations r-AEP:ii} For any $\tau = (\tau_m)_{m\in\N}\in\rBoundedStoppingTimes_\frm(r,\filtration,\subsetProbaMeasures)$, the process $(E_{m,\tau_m})_{m\in\N}$ is a uniformly asymptotic e-variable.
    \end{enumerate}
\end{restatable}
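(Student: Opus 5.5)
The plan is to reduce the corollary to \cref{thm:characterizations r-SAEP} applied to the truncated processes $E\pwMin t := (E_{m,n}\pwMin t)_{m,n\in\N}$ for fixed $t\in\Rnn$. The implication \labelcref{item:characterizations r-AEP:i}$\Rightarrow$\labelcref{item:characterizations r-AEP:ii} is immediate: $\rBoundedStoppingTimes_\frm(r,\filtration,\subsetProbaMeasures)\subset\rBoundedStoppingTimes(r,\filtration,\subsetProbaMeasures)$, and the defining inequality of an $r$-\gls{aep} evaluated along a finite sequence $\tau$ is exactly the statement that $(E_{m,\tau_m})_{m\in\N}$ is a uniformly asymptotic e-variable.

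For \labelcref{item:characterizations r-AEP:ii}$\Rightarrow$\labelcref{item:characterizations r-AEP:i}, fix $t\in\Rnn$. The first step is to note that $E\pwMin t$ is a nonnegative process adapted to $\filtration$, since truncation by a constant preserves measurability. The second step is to observe that $(E\pwMin t)_{m,\tau_m} = E_{m,\tau_m}\pwMin t$ for every sequence $\tau$. This identity also holds when $\tau_m=\infty$, because under the convention \cref{eq:convention evaluation at infinity} we have $\limsup_n (E_{m,n}\pwMin t) = (\limsup_n E_{m,n})\pwMin t$, as $x\mapsto x\pwMin t$ is continuous and nondecreasing.

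Hypothesis \labelcref{item:characterizations r-AEP:ii} then gives, for every $\tau\in\rBoundedStoppingTimes_\frm(r,\filtration,\subsetProbaMeasures)$,
\[
\limsup_{m\to\infty}\sup_{P\in\subsetProbaMeasures}\E_P[(E\pwMin t)_{m,\tau_m}]\leq 1.
\]
This is precisely statement \labelcref{item:characterizations r-SAEP:ii} of \cref{thm:characterizations r-SAEP} for the process $E\pwMin t$. By that theorem, $E\pwMin t$ is a uniformly $r$-\gls{saep}, so the same bound holds for all $\tau\in\rBoundedStoppingTimes(r,\filtration,\subsetProbaMeasures)$. Applying the identity from the second step again, this yields $\limsup_{m}\sup_P\E_P[E_{m,\tau_m}\pwMin t]\leq 1$. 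Since $t$ was arbitrary, $E$ is a uniformly $r$-\gls{aep}.

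The only point that needs care is the commutation of truncation with evaluation at $\tau_m=\infty$. This is where the passage from finite to possibly infinite stopping times actually takes place, but all the substantive work is already contained in \cref{thm:characterizations r-SAEP}, so no real obstacle is expected.
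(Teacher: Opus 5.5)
Your proposal is correct and follows the paper's proof: both apply \cref{thm:characterizations r-SAEP} to the truncated process $(E_{m,n}\pwMin t)_{m,n\in\N}$ for each fixed $t\in\Rnn$. You also check explicitly that $\limsup_n (E_{m,n}\pwMin t) = (\limsup_n E_{m,n})\pwMin t$, so that truncation commutes with evaluation at $\tau_m=\infty$; the paper's ``by definition'' step uses this without stating it, and your argument for it is correct.
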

\begin{remark}[$\infty$-\glspl{saep}]\label{rmk:infinity-asymptotic e-processes}
An interesting consequence of \cref{thm:characterizations r-SAEP} is that $\infty$-\glspl{saep} are precisely those processes that are upper-bounded by nonnegative supermartingales that satisfy the asymptotic calibration condition \cref{thm:characterizations r-SAEP}\labelcref{item:characterizations r-SAEP:npsms calibration}.
In other words, leveraging \citet[Lemma 6]{ramdasAdmissibleAnytimevalidSequential2022}, such processes $E$ can be re-normalized to be true e-processes for all $m\in\N$ sufficiently large for the integrability of \cref{thm:characterizations r-SAEP}\labelcref{item:characterizations r-SAEP:npsms integrability} to occur.
This strongly supports the relevance of allowing general sequences $r$ in \cref{def:asymptotic e-process} for a genuinely asymptotic notion, since $\infty$-\glspl{saep} are e-processes up to normalization. 
\end{remark}

\subsection{Limiting behavior}
\label{sec:limiting behavior}

We conclude this section by showing that \cref{def:asymptotic e-process} generalizes the idea of ``converging to an e-process'' in the following sense:
for bi-indexed processes that converge in $L_1$ along the first index, the notions of $r$-\gls{saep} and of $r$-\gls{aep} coincide, and are equivalent to the limit being an e-process.
This further justifies the relevance of allowing for general sequences $r$ other than $r\equiv\infty$ in \cref{def:asymptotic e-process}.
\begin{restatable}{theorem}{aepWithConvergence}\label{thm:asymptotic e-processes converge to e-processes}
    Let $\filtration$ be a filtration array, $E = (E_{m,n})_{m,n\in\N}$ be a nonnegative process adapted to $\filtration$, and $F = (F_n)_{n\in\N}$ be a process adapted to $\filtration_{\infty,\bullet}$.
    Assume that $E$ converges to $F$ in $L_1$ uniformly in $\subsetProbaMeasures$ along the first index.
    The following statements are equivalent: \begin{enumerate}[label=(\roman*)]
        \item \label{item:asymptotic e-processes converge to e-processes:r-AEP} there exists an extended integer sequence $r=(r_m)_{m\in\N}$ with $r_m\to\infty$ as $m\to\infty$ such that $E$ is a uniformly $r$-\gls{aep} for $\filtration$ and $\subsetProbaMeasures$;
        \item \label{item:asymptotic e-processes converge to e-processes:r-SAEP} there exists an extended integer sequence $r=(r_m)_{m\in\N}$ with $r_m\to\infty$ as $m\to\infty$ such that $E$ is a uniformly $r$-\gls{saep} for $\filtration$ and $\subsetProbaMeasures$;
        \item \label{item:asymptotic e-processes converge to e-processes:e-process} $F$ is an e-process for $\filtration_{\infty,\bullet}$ and $\subsetProbaMeasures$.
    \end{enumerate}
    If one of these statements holds (and thus, all hold), then $E$ is a uniformly $r$-\gls{saep} (and, thus, also a uniformly $r$-\gls{aep}) for any extended integer sequence $r=(r_m)_{m\in\N}\subset\N\cup\{\infty\}$ that satisfies \begin{equation}\label{eq:r of converging asymptotic e-process}
        \lim_{m\to\infty}\sum_{n=0}^{r_m}\sup_{P\in\subsetProbaMeasures}\E_P[\lvert E_{m,n}-F_{n}\rvert] = 0,
    \end{equation}
    and there exists at least one such sequence with $r_m\to\infty$ as $m\to\infty$.
\end{restatable}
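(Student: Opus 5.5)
The plan is to prove the cycle \labelcref{item:asymptotic e-processes converge to e-processes:r-SAEP}$\Rightarrow$\labelcref{item:asymptotic e-processes converge to e-processes:r-AEP}$\Rightarrow$\labelcref{item:asymptotic e-processes converge to e-processes:e-process}$\Rightarrow$\labelcref{item:asymptotic e-processes converge to e-processes:r-SAEP}. The final direction is proved in the quantitative form stated at the end of the theorem: any $r$ satisfying \cref{eq:r of converging asymptotic e-process} works, and such an $r$ with $r_m\to\infty$ exists. The first implication is immediate, since an $r$-\gls{saep} is always an $r$-\gls{aep}.

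\textbf{Step 1: \labelcref{item:asymptotic e-processes converge to e-processes:e-process} implies the sufficient condition.} Fix $r$ satisfying \cref{eq:r of converging asymptotic e-process}. By \cref{thm:characterizations r-SAEP}\labelcref{item:characterizations r-SAEP:ii} it suffices to consider finite stopping times $\tau_m\in\rBoundedStoppingTimes_\frm(r_m,\filtration_{m,\bullet},\subsetProbaMeasures)$. Since $\filtration$ is a filtration array, $\filtration_{m,n}\subset\filtration_{\infty,n}$, so each $\tau_m$ is also an $\filtration_{\infty,\bullet}$-stopping time, and therefore $\E_P[F_{\tau_m}]\leq 1$. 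Pointwise we have $E_{m,\tau_m}\leq F_{\tau_m}+\sum_{n=0}^{r_m}\lvert E_{m,n}-F_n\rvert$, $P$-\gls{as} on $\{\tau_m\leq r_m\}$. Taking expectations gives $\sup_P\E_P[E_{m,\tau_m}]\leq 1+\sum_{n=0}^{r_m}\sup_P\E_P[\lvert E_{m,n}-F_n\rvert]$, and the sum vanishes as $m\to\infty$. This yields the $r$-\gls{saep} property.

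\textbf{Step 2: existence of a diverging admissible $r$.} Set $\delta_{m,n}:=\sup_P\E_P[\lvert E_{m,n}-F_n\rvert]$; for each fixed $n$, $\delta_{m,n}\to 0$ as $m\to\infty$. Use a diagonal choice: let $r_m$ be the largest $k\leq m$ such that $\sum_{n=0}^{k}\delta_{m,n}\leq 1/(k+1)$, with $r_m=0$ if no such $k$ exists. For each fixed $K$, the finite sum $\sum_{n\leq K}\delta_{m,n}$ eventually falls below $1/(K+1)$, so $r_m\geq K$ eventually, hence $r_m\to\infty$. The admissibility condition \cref{eq:r of converging asymptotic e-process} still has to be checked, for instance by defining $r_m$ through a threshold $\eta_k\downarrow 0$ and verifying that the sum tends to $0$. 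Steps 1 and 2 together give \labelcref{item:asymptotic e-processes converge to e-processes:e-process}$\Rightarrow$\labelcref{item:asymptotic e-processes converge to e-processes:r-SAEP}.

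\textbf{Step 3: \labelcref{item:asymptotic e-processes converge to e-processes:r-AEP} implies \labelcref{item:asymptotic e-processes converge to e-processes:e-process}.} This is the main obstacle: a stopping time for $\filtration_{\infty,\bullet}$ need not be a stopping time for any single $\filtration_{m,\bullet}$.
\begin{enumerate}[label=(\alph*)]
\item \emph{Reduction.} By \citet[Lemma 6]{ramdasAdmissibleAnytimevalidSequential2022} it suffices to treat finite stopping times. By Fatou's lemma, applied to $F_{\tau\pwMin N}\to F_\tau$ pointwise, it then suffices to treat bounded stopping times $\tau\leq N$.
\item \emph{Approximation.} Fix $P$ and $\varepsilon>0$. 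Each event $\{\tau=n\}\in\filtration_{\infty,n}=\sigma(\bigcup_j\filtration_{j,n})$, and the union $\bigcup_j\filtration_{j,n}$ is an increasing union, hence an algebra. The standard approximation lemma therefore gives $B_n\in\filtration_{j,n}$, for a common $j$, with $P(B_n\triangle\{\tau=n\})\leq\varepsilon$. Define $\tau':=\min\{n\leq N:\omega\in B_n\}$, with the convention $\tau'=N$ if the set is empty. Then $\tau'$ is an $\filtration_{m,\bullet}$-stopping time for every $m\geq j$, $\tau'\leq N$, and $P(\tau\neq\tau')\leq(N+1)\varepsilon$.
\item \emph{Applying the $r$-\gls{aep} property.} Set $\tau'_m=\tau'$ for $m$ large enough that $m\geq j$ and $r_m\geq N$, which is possible because $r_m\to\infty$, and $\tau'_m=0$ otherwise. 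This sequence lies in $\rBoundedStoppingTimes(r,\filtration,\subsetProbaMeasures)$, so $\limsup_m\E_P[E_{m,\tau'}\pwMin t]\leq 1$ for every $t$.
\item \emph{Passing to the limit in $m$.} Because $\tau'\leq N$, we have $\lvert E_{m,\tau'}\pwMin t-F_{\tau'}\pwMin t\rvert\leq\sum_{n\leq N}\lvert E_{m,n}-F_n\rvert$, so $E_{m,\tau'}\pwMin t\to F_{\tau'}\pwMin t$ in $L_1(P)$. Hence $\E_P[F_{\tau'}\pwMin t]\leq 1$.
\item \emph{Conclusion.} We get $\E_P[F_\tau\pwMin t]\leq 1+t(N+1)\varepsilon$. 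Let $\varepsilon\to 0$, then let $t\to\infty$ by monotone convergence, to obtain $\E_P[F_\tau]\leq 1$.
\end{enumerate}
Uniformity in $P$ is not needed in this direction, since the approximation is done for each fixed $P$.
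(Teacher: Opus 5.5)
Your proposal is correct. Steps 1 and 2 coincide with the paper's proof of (iii)$\Rightarrow$(ii); the paper obtains the diverging $r$ from the diagonal \cref{lemma:rate of uniform boundedness}. Your explicit choice of $r_m$ needs no further check. Once $\delta_{m,0}\le 1$, some $k$ qualifies, and then $\sum_{n=0}^{r_m}\delta_{m,n}\le 1/(r_m+1)\to 0$ by the very definition of $r_m$. No auxiliary threshold sequence is required.

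For (i)$\Rightarrow$(iii) you take a genuinely different route from the paper.

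The paper works directly with a finite, possibly unbounded $\filtration_{\infty,\bullet}$-stopping time $\tau$. It builds $m$-dependent stopping times $\theta_m=\inf\{k:\E_P[\indicator_{\{\tau\le k\}}\mid\filtration_{m,k}]>1/2\}\pwMin r_m$, which converge to $\tau$ $P$-almost surely by L\'evy's upward theorem. Since $\theta_m$ ranges over $\integers{r_m}$, controlling $\E_P[\lvert F_{\theta_m}-E_{m,\theta_m}\rvert]$ forces a preliminary shrinking of $r$ so that $(r_m+1)\sup_{P}\E_P[\sup_{n\le r_m}\lvert E_{m,n}-F_n\rvert]\to 0$. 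Fatou in $m$ then gives $\E_P[F_\tau\pwMin t]\le 1$.

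You instead first reduce to $\tau\le N$ by Fatou in $N$. You then replace $\tau$ by a single stopping time $\tau'$ of a fixed $\filtration_{j,\bullet}$ that agrees with $\tau$ outside a set of probability at most $(N+1)\varepsilon$, using density of the algebra $\bigcup_j\filtration_{j,n}$ in $\filtration_{\infty,n}$. Because $\tau'$ is bounded and does not move with $m$, you need only:
\begin{itemize}
\item $L_1$ convergence at the finitely many indices $n\le N$;
\item $r_m\to\infty$ only to guarantee $r_m\ge N$ eventually;
\item the truncation at $t$, which absorbs the mismatch at cost $t(N+1)\varepsilon$.
\end{itemize}

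Your argument is more elementary: there is no martingale convergence theorem and no rate-matching of $r$ against the $L_1$ errors. It also shows that eventually constant, bounded stopping-time sequences already suffice. The paper's argument avoids your double approximation ($\tau\pwMin N$, then $\tau'$) by producing one sequence that converges almost surely to $\tau$ itself. Both arguments rely essentially on the monotonicity of $\filtration$ in the first index.
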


\section{Asymptotic supermartingales}
\label{sec:asymptotic-supermartingales}
We now proceed to generalize the
theory of test supermartingales in an asymptotic
context and relate it to \cref{def:asymptotic e-process}.
The general goal is to find sufficient criteria for $r$-\glspl{saep} that can be verified from simple assumptions and used to find suitable sequences $r$, similarly to the fact that, in the nonasymptotic case, test supermartingales are e-processes.
In particular, we are after conditions that are easier to verify than \cref{thm:characterizations r-SAEP}\labelcref{item:characterizations r-SAEP:iii}.
We focus on \glspl{saep}, and leave the extension to \glspl{aep} for future work.
\begin{definition}[Asymptotic supermartingale in $L_1$]\label{def:asymptotic supermartingale}
    Let $\filtration = (\filtration_{m,n})_{m,n\in\N}$ be a filtration array and $E = (E_{m,n})_{m,n\in\N}$ be a nonnegative process adapted to $\filtration$.
    We say that $E$ is an \emph{\glsfirst{asm}} (in $L_1$, for $\filtration$, uniformly in $\subsetProbaMeasures$) if there exists a nonnegative supermartingale $(S_n)_{n\in\N}$ for $\subsetProbaMeasures$ with respect to $\filtration_{\infty,\bullet}$ such that $E$ converges to $S$ in $L_1$ uniformly in $\subsetProbaMeasures$.
\end{definition}
We briefly discuss the relation between $S$, $E_{\infty,\bullet}$, and the question of whether this last process is a supermartingale.
To simplify, we reason for a fixed $P\in\subsetProbaMeasures$ in the discussion.
\par
The convergence required in \cref{def:asymptotic supermartingale} is in $L_1$, and not \gls{as}, and there is no relation in general between $S$ and $E_{\infty,\bullet}$, which we defined pointwise for $\omega\in\Omega$ as 
\begin{equation*}
    E_{\infty,n}(\omega) = \limsup_{m\to\infty}E_{m,n}(\omega).
\end{equation*}
We may have in general $E_{\infty,n}\neq S_n$ on events of positive measure.
The additional assumption that $E_{\infty,n}$ is an actual limit suffices to enforce $E_{\infty,n} = S_n$ \gls{as}, however.
Indeed, this follows from the fact that both \gls{as} and $L_1$ convergences imply convergence in probability to their respective limits, and thus the limits must coincide \gls{as}
In what follows, we do not make the assumption that $E_{\infty,n}$ is a true
\gls{as} limit, and thus stick to the distinct notation 
$S$ to denote the limit in $L_1$.
\par
Conversely, it does not suffice that $E_{\infty,\bullet}$ is a true limit and is a supermartingale for $E$ to be \pgls{asm} with $S = E_{\infty,\bullet}$, as the convergence may fail in $L_1$.
In fact, convergence in $L_1$ of $E_{\bullet,n}$, $n\in\N$ fixed, is equivalent to its \emph{uniform integrability}, since it converges in probability by assumption \citep[Theorem 4.6.3]{durrett2019probability}.
In other words, convergence in $L_1$ is a strong condition that may be tedious to verify.
Fortunately, it is also not necessary for our later results; all that is required is that $E$ behaves ``like'' a supermartingale as $m\to\infty$.
This is handled in the next definition.
\begin{definition}[Asymptotic supermartingale property]
\label{def:asp}
    Let $\filtration = (\filtration_{m,n})_{m,n\in\N}$ be a filtration sequence and $E = (E_{m,n})_{m,n\in\N}$ be a $\subsetProbaMeasures$-integrable\footnote{In consistency with the discussion after \cref{def:l1 convergence of bi-indexed process}, the results of this section generalize to the case where integrability holds only for $m\geq m_0$, with $m_0\in\N$ independent of $P\in\subsetProbaMeasures$.} nonnegative process adapted to $\filtration$.
    For all $m,n\in\N$ and $P\in\subsetProbaMeasures$, define \begin{equation*}
        \eta_{m,n} = \E_P[E_{m,n+1}\mid\filtration_{m,n}] - E_{m,n}
    \end{equation*}
    where the dependency of $\eta_{m,n}$ on $P$ is omitted to 
    simplify notation.
    We say that $E$ has the \emph{\glsfirst{asp}} (in $L_1$, for $\filtration$, uniformly in $\subsetProbaMeasures$) if  \begin{equation*}
        \forall n\in\N,~\lim_{m\to\infty}\sup_{P\in\subsetProbaMeasures}\E_P[\eta_{m,n}^+] = 0.
    \end{equation*}
\end{definition}
In the discussion, we omit for conciseness the qualifiers ``in $L_1$'', ``for $\filtration$'', and ``uniformly in $\subsetProbaMeasures$'' when talking about \glspl{asm} or the \gls{asp}.
Naturally, \glspl{asm} have the \gls{asp}.
\begin{restatable}{theorem}{asmImpliesAsp}\label{thm:asm implies asp}
    Let $E = (E_{m,n})_{m,n\in\N}$ be a nonnegative process and $\filtration = (\filtration_{m,n})_{m,n\in\N}$ be a filtration array.
    Assume that $E$ is \pgls{asm} in $L_1$ for $\filtration$ uniformly in $\subsetProbaMeasures$.
    Then, $E$ has the \gls{asp} in $L_1$ for $\filtration$ uniformly in $\subsetProbaMeasures$.
\end{restatable}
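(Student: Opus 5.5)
The approach is to compare $\eta_{m,n}$ with the corresponding (nonpositive) quantity for the limit supermartingale $S$, and to control the difference by the $L_1$ errors $\E_P[\lvert E_{m,k}-S_k\rvert]$, $k\in\{n,n+1\}$. Fix $n\in\N$ and $P\in\subsetProbaMeasures$. Since $E$ and $S$ are $P$-integrable, all the expressions below are \gls{as} finite. We write
\begin{equation*}
    \eta_{m,n} = \E_P[E_{m,n+1}-S_{n+1}\mid\filtration_{m,n}] + \bigl(\E_P[S_{n+1}\mid\filtration_{m,n}] - S_n\bigr) + (S_n - E_{m,n}).
\end{equation*}
Because $(x+y+z)^+\leq x^++y^++z^+$, it suffices to bound the expectation of the positive part of each of the three terms, uniformly in $P$.

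The first and third terms are handled by the $L_1$ convergence. By conditional Jensen and the tower property, the first term has positive part with expectation at most $\E_P[\lvert E_{m,n+1}-S_{n+1}\rvert]$. The third term is bounded by $\E_P[\lvert E_{m,n}-S_n\rvert]$. Taking $\sup_P$, both bounds tend to $0$ by the uniform $L_1$ convergence in \cref{def:asymptotic supermartingale}.

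The middle term is the main obstacle, since $S$ is a supermartingale only for $\filtration_{\infty,\bullet}$, not for $\filtration_{m,\bullet}$. Because $\filtration$ is a filtration array, we have $\filtration_{m,n}\subset\filtration_{\infty,n}$. The tower property then gives
\begin{equation*}
    \E_P[S_{n+1}\mid\filtration_{m,n}] = \E_P\bigl[\E_P[S_{n+1}\mid\filtration_{\infty,n}]\mid\filtration_{m,n}\bigr]\leq \E_P[S_n\mid\filtration_{m,n}].
\end{equation*}
So the middle term is at most $\E_P[S_n\mid\filtration_{m,n}]-S_n$, and $S_n$ need not be $\filtration_{m,n}$-measurable. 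We fix this using $E_{m,n}$, which is $\filtration_{m,n}$-measurable by adaptedness:
\begin{equation*}
    \E_P[S_n\mid\filtration_{m,n}]-S_n = \E_P[S_n-E_{m,n}\mid\filtration_{m,n}] + (E_{m,n}-S_n).
\end{equation*}
Applying the same Jensen argument, the middle term has positive-part expectation at most $2\E_P[\lvert E_{m,n}-S_n\rvert]$.

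Summing the three bounds gives
\begin{equation*}
    \sup_{P\in\subsetProbaMeasures}\E_P[\eta_{m,n}^+]\leq \sup_{P}\E_P[\lvert E_{m,n+1}-S_{n+1}\rvert]+3\sup_P\E_P[\lvert E_{m,n}-S_n\rvert],
\end{equation*}
which tends to $0$ as $m\to\infty$ for every $n$. This is the \gls{asp}.
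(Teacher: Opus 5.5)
Your proof is correct and follows essentially the same route as the paper. The paper uses the same decomposition of $\eta_{m,n}$, only splitting your middle term into $\E_P[S_{n+1}-S_n\mid\filtration_{m,n}]\leq 0$ and $\E_P[S_n\mid\filtration_{m,n}]-S_n$. It likewise uses the tower property through $\filtration_{m,n}\subset\filtration_{\infty,n}$ and the $\filtration_{m,n}$-measurability of $E_{m,n}$ to bound the latter term by $2\E_P[\lvert E_{m,n}-S_n\rvert]$, which gives the same final bound as yours.
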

The converse implication does not hold; that is, not every process with the \gls{asp} is an asymptotic supermartingale.
The reason is that the \gls{asp} only constrains the drift part of the Doob decomposition.
More specifically, fix $P\in\subsetProbaMeasures$ and introduce $(M,A)$ the Doob decomposition of $E$ along the $n$-index; that is, for all $m\in\N$ and $n\in\N$,\begin{equation*}
    E_{m,n} = M_{m,n} + A_{m,n},
\end{equation*}with $M_{m,\bullet}$ an $\filtration_{m,\bullet}$-martingale and $A_{m,\bullet}$ being $\filtration_{m,\bullet}$-predictable \citep[Theorem 12.11]{williams1991martingales}.
It holds that $A_{m,n} = \sum_{i=0}^{n} \eta_{m,i}$ for $m,n\in\N$
and $\eta_{m,n} = \E_P[E_{m,n+1}\mid\filtration_{m,n}] - E_{m,n}$.
Introduce now $M^\prime$ another bi-indexed process such that $M^\prime_{m,\bullet}$ is a martingale and such that $E^\prime_{m,n} := M^\prime_{m,n} + A_{m,n}$ is nonnegative, $m,n\in\N$.
It is immediate to see that we also have $\eta_{m,n} = \E_P[E^\prime_{m,n+1}\mid\filtration_{m,n}] - E^\prime_{m,n}$, and thus $E$ has the \gls{asp} if, and only if, $E^\prime$ does.
This enables constructing processes with the \gls{asp} that do not converge along the $m$-axis to integrable processes.
For instance, assuming that $E$ is such that $A$ is bounded \gls{as} by some constant $B\in\Rp$, taking $M_{m,n}^\prime = B+m$ above certainly defines a martingale along the $n$-index such that $E^\prime_{m,n}$ is nonnegative, and $E^\prime_{m,n}\to\infty$ as $m\to\infty$, \gls{as}, and thus $E_{\bullet,n}^\prime$ does not converge in $L_1(\Omega,\sigAlg,P)$.
Another example is the process $E_{m,n} = 2+\sin(m)$, which is such that $E_{m,\bullet}$ is a nonnegative martingale for all $m\in\N$ (and, thus, has the \gls{asp}, since $\eta_{m,n} = 0$ for all $m,n\in\N$), but $E_{\bullet,n}$ does not converge in $L_1$ for any $n\in\N$.

\subsection{A sufficient condition for uniformly strongly \texorpdfstring{$r$}{r}-asymptotic e-processes}
We recall the standard 
result that test supermartingales are e-processes.
A similar result holds in the asymptotic case: the \gls{asp} is sufficient for being an $r$-\gls{saep} for some $r=(r_m)_{m\in\N}$ with $r_m\to\infty$, under an additional calibration requirement.
The proof begins with the following observation.
\begin{restatable}{lemma}{rSummableIncrements}\label{lemma:r-summable increments}
    Let $\filtration = (\filtration_{m,n})_{m,n\in\N}$ be a filtration sequence and $E = (E_{m,n})_{m,n\in\N}$ be a nonnegative process adapted to $\filtration$.
    Assume that $E$ has the \gls{asp} in $L_1$ for $\filtration$ uniformly in $\subsetProbaMeasures$.
    Then, there exists $r=(r_m)_{m\in\N}\subset\N$ with $r_m\to\infty$ as $m\to\infty$ such that \begin{equation}\label{eq:r-summable increments}
        \lim_{m\to\infty}\sup_{P\in\subsetProbaMeasures}\E_P\left[\sum_{n=0}^{r_m-1}\eta_{m,n}^+\right] = 0,
    \end{equation}
    where $\eta_{m,n}$ is given in \cref{def:asp}.
\end{restatable}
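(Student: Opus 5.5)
This is a diagonal argument. First set, for all $m,n\in\N$,
\begin{equation*}
    a_{m,n} := \sup_{P\in\subsetProbaMeasures}\E_P[\eta_{m,n}^+]\in[0,\infty].
\end{equation*}
By the \gls{asp}, $a_{m,n}\to 0$ as $m\to\infty$ for every fixed $n$. Since $\E_P[\sum_{n<k}\eta_{m,n}^+] = \sum_{n<k}\E_P[\eta_{m,n}^+]$ by linearity and nonnegativity, we have
\begin{equation*}
    \sup_{P}\E_P\Bigl[\sum_{n=0}^{k-1}\eta_{m,n}^+\Bigr]\leq \sum_{n=0}^{k-1}a_{m,n}=:b_{m,k}.
\end{equation*}
It therefore suffices to construct $r_m\to\infty$ with $b_{m,r_m}\to0$. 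Each $b_{\bullet,k}$ is a finite sum of sequences tending to $0$, so $b_{m,k}\to 0$ as $m\to\infty$ for every fixed $k$. Moreover, $b_{m,k}$ is nondecreasing in $k$.

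For each $k\in\Np$, I choose $M_k\in\N$ such that $b_{m,k}\leq 1/k$ for all $m\geq M_k$. I also impose $M_{k+1}>M_k$ (replacing $M_{k+1}$ by $\max(M_{k+1},M_k+1)$ if needed), so that $M_k\to\infty$. Then I define $r_m := \max\{k\in\Np\mid M_k\leq m\}$, with $r_m:=0$ if this set is empty (an empty sum, giving $b_{m,0}=0$). This maximum is finite because $M_k\to\infty$. Since each $M_k$ is finite, $r_m\geq k$ for all $m\geq M_k$, hence $r_m\to\infty$. Finally, for $m\geq M_1$ we have $m\geq M_{r_m}$ by definition, so $b_{m,r_m}\leq 1/r_m\to 0$. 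Combined with the bound above, this gives \eqref{eq:r-summable increments}.

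The main point of care is finiteness of $a_{m,n}$. The \gls{asp} only gives $a_{m,n}\to0$ as $m\to\infty$, so individual values may be $+\infty$ for small $m$. The construction handles this automatically, since it only uses $m\geq M_k$. The $\subsetProbaMeasures$-integrability of $E$ also makes each $\eta_{m,n}$ well defined a.s.
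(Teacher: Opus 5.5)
Your proof is correct and follows the paper's approach. The paper sets $x_{m,n} = \sup_{P\in\subsetProbaMeasures}\E_P[\sum_{k=0}^{n-1}\eta_{m,k}^+]$, notes that $x_{m,n}\to 0$ for each fixed $n$, and invokes the diagonal argument of \cref{lemma:rate of uniform boundedness}; your inline construction of $M_k$ and $r_m$ is that same argument, and it is slightly more careful in that it gives $b_{m,r_m}\leq 1/r_m\to 0$ directly and explicitly allows infinite values for small $m$.
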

We are now equipped to state the sufficient condition on the sequence $r$.
We begin by naming the calibration condition we require.
\begin{definition}[Asymptotic calibration]\label{def:asymptotic calibration}
    A nonnegative process $E=(E_{m,n})_{m,n\in\N}$ is said to be \emph{asymptotically calibrated} for $\subsetProbaMeasures$ if \begin{equation}\label{eq:calibration}
        \limsup_{m \to \infty}
        \sup_{P\in\subsetProbaMeasures}\E_P[E_{m,0}]\leq 1.
    \end{equation}
\end{definition}
\begin{restatable}{theorem}{aspImpliesSaep}\label{thm:asp implies asymptotic e-process}
    Let $\filtration$ be a filtration sequence and $E = (E_{m,n})_{m,n\in\N}$ be a process adapted to $\filtration$ that has the \gls{asp} in $L_1$ for $\filtration$ uniformly over $\subsetProbaMeasures$.
    If $E$ is asymptotically calibrated for $\subsetProbaMeasures$, then $E$ is a uniformly $r$-\gls{saep} for $\subsetProbaMeasures$ and $\filtration$ for any sequence $r\subset\N\cup\{\infty\}$ that satisfies \cref{eq:r-summable increments}.
\end{restatable}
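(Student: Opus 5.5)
The plan is to build, for each $m$ and $P$, an explicit compensated process that is a genuine supermartingale up to time $r_m$ and dominates $E_{m,\bullet}$. We then control the expectation at a bounded stopping time by optional sampling. Fix $P\in\subsetProbaMeasures$ and $m\in\N$. Define
\begin{equation*}
    L^P_{m,n} := E_{m,n} + \E_P\Bigl[\sum_{i=n}^{r_m-1}\eta_{m,i}^+\Bigm|\filtration_{m,n}\Bigr],\qquad n\in\integers{r_m},
\end{equation*}
with the empty sum equal to $0$. Since \cref{eq:r-summable increments} makes $\E_P[\sum_{i<r_m}\eta_{m,i}^+]$ finite for $m$ large, say $m\geq m_0$ uniformly in $P$, these variables are nonnegative, $P$-integrable, and dominate $E_{m,n}$. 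We check the supermartingale property directly:
\begin{equation*}
    \E_P[L^P_{m,n+1}\mid\filtration_{m,n}] = E_{m,n}+\eta_{m,n} + \E_P\Bigl[\sum_{i=n+1}^{r_m-1}\eta_{m,i}^+\Bigm|\filtration_{m,n}\Bigr]\leq L^P_{m,n},
\end{equation*}
because $\eta_{m,n}\leq\eta_{m,n}^+$ and $\eta_{m,n}^+$ is $\filtration_{m,n}$-measurable. Calibration follows as well: $\E_P[L^P_{m,0}]=\E_P[E_{m,0}]+\E_P[\sum_{i<r_m}\eta_{m,i}^+]$. Taking $\limsup_m\sup_P$ and using asymptotic calibration together with \cref{eq:r-summable increments} gives the bound $\leq 1$. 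Conditions (a)--(d) of \cref{thm:characterizations r-SAEP}\labelcref{item:characterizations r-SAEP:iii} then hold, and that theorem yields the claim.

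The main obstacle is the case $r_m=\infty$, where the sum is infinite. There, \cref{eq:r-summable increments} has to be read as the monotone limit: $\E_P[\sum_{n\geq0}\eta_{m,n}^+]$ is finite for $m\geq m_0$. Consequently the series converges in $L_1$, its conditional expectations are well defined, and the one-step computation above carries over without change. The supermartingale property is then needed for all $n\in\N$, which \cref{thm:characterizations r-SAEP} asks for through ``until index $r_m-1$'' with $r_m-1=\infty$.

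If invoking the characterization feels too indirect, a fallback is a direct argument via optional sampling. For finite $\tau_m\leq r_m$, the stopped nonnegative supermartingale gives $\E_P[E_{m,\tau_m}]\leq\E_P[L^P_{m,\tau_m}]\leq\E_P[L^P_{m,0}]$; for finite $r_m$ this is bounded optional stopping, and otherwise it follows from Fatou. This settles \cref{thm:characterizations r-SAEP}\labelcref{item:characterizations r-SAEP:ii}. We then upgrade to possibly infinite stopping times either through the equivalence in that theorem or by applying Fatou to $\tau_m\pwMin k$ together with the $\limsup$ convention \cref{eq:convention evaluation at infinity}.

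A final detail is integrability of $E_{m,n}$ itself. The \gls{asp} presupposes that $E$ is $\subsetProbaMeasures$-integrable, so $\eta$ is well defined and nothing more is required.
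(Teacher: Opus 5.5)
Your proof is correct, but it takes a different route from the paper's. The paper does not build the witness of \cref{thm:characterizations r-SAEP}\labelcref{item:characterizations r-SAEP:iii}. It writes the Doob decomposition $E_{m,n}=M_{m,n}+A_{m,n}$ and bounds the drift by $\Delta_{m,n}=\sum_{k<n}\eta_{m,k}^+$. It then observes that on $\integers{r_m}$ the martingale $M_{m,\bullet}$ is bounded below by $-\Delta_{m,r_m}$, which is integrable for $m\geq m_0$. Optional stopping for supermartingales with an integrable lower bound (\cref{clry:optional sampling lower bounded supermartingales}) then gives $\E_P[E_{m,\tau_m}]\leq\E_P[E_{m,0}]+\E_P[\Delta_{m,r_m}]$ for finite $\tau_m$. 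The paper concludes with \labelcref{item:characterizations r-SAEP:ii}$\Rightarrow$\labelcref{item:characterizations r-SAEP:i}.

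Your process satisfies $L^P_{m,n}=E_{m,n}+\E_P[\Delta_{m,r_m}-\Delta_{m,n}\mid\filtration_{m,n}]=M_{m,n}+\E_P[\Delta_{m,r_m}\mid\filtration_{m,n}]-\sum_{k<n}\eta_{m,k}^-$. It fuses these two ingredients into a single nonnegative supermartingale, and its initial expectation is exactly the paper's bound.

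What your route buys is economy of machinery. You exhibit the partial supermartingale of \labelcref{item:characterizations r-SAEP:iii} explicitly, so the ASP criterion is visibly an instance of that characterization. You also only need the elementary direction \labelcref{item:characterizations r-SAEP:iii}$\Rightarrow$\labelcref{item:characterizations r-SAEP:i}, which is optional sampling for nonnegative supermartingales. The paper's final step \labelcref{item:characterizations r-SAEP:ii}$\Rightarrow$\labelcref{item:characterizations r-SAEP:i} instead passes through the essential-supremum (Snell-envelope) construction of \labelcref{item:characterizations r-SAEP:ii}$\Rightarrow$\labelcref{item:characterizations r-SAEP:iii}. It additionally relies on \cref{clry:optional sampling lower bounded supermartingales}, and hence on \cref{thm:optional sampling levy martingale}. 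The paper's version, in turn, is a direct two-term estimate that avoids verifying a supermartingale property.

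Two minor points. First, condition (a) is required for all $m,n$, so set $L^P_{m,n}:=E_{m,n}$ for $n>r_m$, and also for $m<m_0$ if you prefer to avoid extended-valued conditional expectations. Second, in your fallback, Fatou cannot be applied to $E_{m,\tau_m\pwMin k}$ on $\{\tau_m=\infty\}$, because $E_{m,\infty}$ is a $\limsup$ and Fatou only controls the $\liminf$. You must apply it to $L^P_{m,\tau_m\pwMin k}$ instead. That process converges a.s.\ by Doob's theorem to a limit dominating $E_{m,\infty}$, exactly as in the paper's proof of \labelcref{item:characterizations r-SAEP:iii}$\Rightarrow$\labelcref{item:characterizations r-SAEP:i}.
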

\paragraph*{Discussion\preprintdot} The criterion \cref{eq:r-summable increments} is essentially a formalization of the idea mentioned in the introduction to truncate the diverging series \cref{eq:diverging series}.
Specifically, the quantity \begin{equation}\label{eq:summable increments}
    \E_P\left[\sum_{n=0}^\infty \eta_{m,n}^+\right]
\end{equation}
may be infinite for every fixed $m\in\N$ for some (or all) $P\in\subsetProbaMeasures$.
This directly results from the fact that each $E_{m,\bullet}$ may not be a supermartingale for any finite $m\in\N$; if it were, every summand would be \gls{as} $0$.
Requiring this quantity to be finite is also a strong property, however.
Indeed, it is easy to see that if \cref{eq:summable increments} is finite for some $m\in\N$ and $E_{m,0}$ is integrable, then $E_{m,\bullet}$ is an \emph{almost} supermartingale, in the sense that it satisfies the assumptions of \citet[Theorem 1]{robbinsConvergenceTheoremNonnegative1971}.
In contrast, allowing a sequence $r$ of (finite) numbers provides flexibility: $r$ needs to increase slowly enough for the sum in \cref{eq:r-summable increments} to vanish, and it is always possible to find such a sequence under the condition that each summand $\eta^+_{m,n}$ vanishes as $m\to \infty$, which is precisely the \gls{asp}.
This gives an intuitive interpretation of the sequence $r$: it should increase slowly enough so that the errors incurred by the fact that $E_{m,\bullet}$ is not a supermartingale do not accumulate over the considered horizon.

\subsection{Uniformly strongly \texorpdfstring{$r$}{r}-asymptotic e-processes without the ASP}\label{subsec:asymptotic e-processes without the asp}

The \gls{asp}, in combination with calibration, is only a sufficient condition for $r$-\glspl{saep};
it is not necessary.
This mirrors the fact that, in the nonasymptotic regime, there are e-processes that are not supermartingales.
This observation gives a general recipe to construct $r$-\glspl{saep} that do not have the \gls{asp}: it suffices that they converge in $L_1$ to an e-process that is not a supermartingale.
This is based on the following result, which can be seen as a converse result to \cref{thm:asm implies asp} and guarantees that the limit in $L_1$ of a process that has the \gls{asp} must be a supermartingale when it exists.
\begin{restatable}{theorem}{aspAndConvergenceImplyAsm}\label{thm:asp and convergence implies asm}
    Let $\filtration$ be a filtration array, $E = (E_{m,n})_{m,n\in\N}$ be a nonnegative process adapted to $\filtration$, and $S = (S_n)_{n\in\N}$ be a process adapted to $\filtration_{\infty,\bullet}$.
    If $E$ has the \gls{asp} in $L_1$ for $\filtration$ uniformly in $\subsetProbaMeasures$ and converges to $S$ in $L_1$ uniformly in $\subsetProbaMeasures$, then $S$ is a supermartingale for $\subsetProbaMeasures$ with respect to $\filtration_{\infty,\bullet}$.
\end{restatable}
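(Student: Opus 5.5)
We need to show that for each $P\in\subsetProbaMeasures$ and $n\in\N$, $S_n$ is $P$-integrable (given by $L_1$ convergence) and $\E_P[S_{n+1}\mid\filtration_{\infty,n}]\leq S_n$, $P$-\gls{as} By a standard characterization, it suffices to show that for every bounded nonnegative $\filtration_{\infty,n}$-measurable $Z$ (equivalently, indicators $\indicator_A$ with $A\in\filtration_{\infty,n}$),
\begin{equation*}
\E_P[(S_{n+1}-S_n)\indicator_A]\leq 0 .
\end{equation*}
The plan is to prove this first for $A$ in the $\pi$-system/algebra $\bigcup_{j}\filtration_{j,n}$, then extend to $\filtration_{\infty,n}$ by approximation.

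\textbf{Step 1 (sets from a fixed level).} Fix $j\in\N$ and $A\in\filtration_{j,n}$. Because $\filtration$ is a filtration array, $A\in\filtration_{m,n}$ for all $m\geq j$. By the definition of $\eta_{m,n}$ and the tower property,
\begin{equation*}
\E_P[(E_{m,n+1}-E_{m,n})\indicator_A] = \E_P[\eta_{m,n}\indicator_A]\leq \E_P[\eta_{m,n}^+].
\end{equation*}
Next, $\lvert\E_P[(E_{m,n+1}-E_{m,n})\indicator_A]-\E_P[(S_{n+1}-S_n)\indicator_A]\rvert\leq \E_P\lvert E_{m,n+1}-S_{n+1}\rvert+\E_P\lvert E_{m,n}-S_n\rvert\to0$ as $m\to\infty$, and the \gls{asp} gives $\E_P[\eta_{m,n}^+]\to0$. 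Letting $m\to\infty$ yields $\E_P[(S_{n+1}-S_n)\indicator_A]\leq 0$. Uniformity in $P$ is not needed here; pointwise-in-$P$ convergence suffices.

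\textbf{Step 2 (extension to $\filtration_{\infty,n}$).} This is the main obstacle, though a routine one. The collection $\mathcal C=\bigcup_j\filtration_{j,n}$ is an algebra, because it is an increasing union of $\sigma$-algebras, and it generates $\filtration_{\infty,n}$. Consider the finite signed measure $\nu(A)=\E_P[(S_{n+1}-S_n)\indicator_A]$, which is well defined since $S_n,S_{n+1}\in L_1(P)$. I would argue via the approximation lemma: for $A\in\filtration_{\infty,n}$ and $\varepsilon>0$, there is $B\in\mathcal C$ with $P(A\triangle B)<\delta$. Absolute continuity of the integral of $\lvert S_{n+1}-S_n\rvert$ then gives $\lvert\nu(A)-\nu(B)\rvert<\varepsilon$, so $\nu(A)\leq\varepsilon$ for every $\varepsilon$, hence $\nu(A)\leq0$. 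An alternative is a monotone class argument: the class $\{A:\nu(A)\leq 0\}$ need not be closed under differences, so the approximation route is cleaner.

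\textbf{Step 3 (conclusion).} Set $D=\E_P[S_{n+1}\mid\filtration_{\infty,n}]-S_n$, which is $\filtration_{\infty,n}$-measurable because $S$ is adapted to $\filtration_{\infty,\bullet}$. Taking $A=\{D>0\}$ gives $\E_P[D\indicator_{\{D>0\}}]=\nu(A)\leq0$, so $D\leq0$ $P$-\gls{as} This holds for all $n$ and $P\in\subsetProbaMeasures$, so $S$ is a supermartingale for $\subsetProbaMeasures$ with respect to $\filtration_{\infty,\bullet}$.
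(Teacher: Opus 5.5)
Your proof is correct, but it takes a different route from the paper's.

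\textbf{The paper's route.} It works at the level of random variables. With $\sigma_n=\E_P[S_{n+1}\mid\filtration_{\infty,n}]-S_n$, it writes $\eta_{m,n}-\sigma_n$ as a sum of three terms and shows each vanishes in $L_1(\Omega,\sigAlg,P)$. The two outer terms vanish by $L_1$ convergence together with contraction of conditional expectation. The middle term $\E_P[S_{n+1}\mid\filtration_{m,n}]-\E_P[S_{n+1}\mid\filtration_{\infty,n}]$ vanishes by L\'evy's upward theorem. Hence $\eta_{m,n}\to\sigma_n$ in $L_1$, and $\E_P[\sigma_n^+]=\lim_m\E_P[\eta_{m,n}^+]=0$.

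\textbf{Your route.} You work weakly, testing against sets. You pass to the limit in $\E_P[\eta_{m,n}\indicator_A]$ only for $A$ in the algebra $\bigcup_j\filtration_{j,n}$, where no conditional expectation with respect to a varying $\sigma$-algebra has to be controlled. You then replace L\'evy's upward theorem by the approximation lemma for algebras plus absolute continuity of the integral.

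\textbf{What each buys.} Your argument is more elementary: it uses no martingale convergence theorem and in effect inlines the identification step underlying L\'evy's theorem. The paper's argument yields the stronger fact that the drifts themselves converge, $\eta_{m,n}\to\sigma_n$ in $L_1$, so that $\lim_m\E_P[\eta_{m,n}^+]=\E_P[\sigma_n^+]$. Combined with the converse result (ASMs have the ASP), this shows transparently that, under $L_1$ convergence, the ASP is exactly the statement that the limit has nonpositive drift. Your approach gives only $\E_P[\sigma_n^+]\le\liminf_m\E_P[\eta_{m,n}^+]$, which is all this theorem needs.

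\textbf{A small side remark.} You dismiss the monotone class route too quickly. The class $\{A:\nu(A)\le 0\}$ is closed under increasing unions and decreasing intersections, by countable additivity of the finite signed measure $\nu$, and it contains the algebra $\bigcup_j\filtration_{j,n}$. The monotone class theorem for algebras, which unlike Dynkin's $\pi$-$\lambda$ theorem needs no closure under differences, would therefore also finish your Step 2.
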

\Cref{thm:asp and convergence implies asm} enables constructing $r$-\glspl{saep} that do not have the \gls{asp}.
Indeed, for processes that converge in $L_1$ along the first axis, having the \gls{asp} is equivalent to the limit process being a supermartingale, by \cref{thm:asm implies asp,thm:asp and convergence implies asm}.
Therefore, if one has at hand an e-process that is \emph{not} a supermartingale, any process that converges to it in $L_1$ along the first axis is an $r$-\gls{saep} for a sequence $r$ chosen appropriately as a function of the speed of convergence in $L_1$, by \cref{thm:asymptotic e-processes converge to e-processes}, and it does not have the \gls{asp}.
Relevant examples are processes that converge in $L_1$ along the first axis to \emph{time mixture e-processes} \cite[Section 7.9]{ramdasHypothesisTestingEvalues2025}.
We discuss this in more detail in \cref{subsec:time mixture}.

\section{Asymptotic Ville's inequality}
\label{sec:asymp-ville}
We now present an essential result of this work that relates asymptotic e-processes and asymptotic \gls{savi} to asymptotic testing:
an asymptotic analogue of Ville's inequality for $r$-\glspl{aep} and $r$-\glspl{saep}.
It reflects the core idea of \cref{def:asymptotic e-process} by providing an asymptotic bound for the excursion probabilities of $E_{m,\bullet}$.
Crucially, the bound only concerns a horizon up to $r_m$, establishing $r$ as the time horizon that evolves with the approximation index $m$ and until which $E$ can be monitored with asymptotic guarantees.
Intuitively, this ties $r$ to how close $E$ is to being a true e-process for testing purposes.
\begin{restatable}{theorem}{ville}
    \label{thm:asymptotic-ville}
    Let $E = (E_{m,n})_{m,n\in\N}$ be a nonnegative process adapted to a filtration sequence $\filtration$, and $r = (r_m)_{m\in\N}\subset\N\cup\{\infty\}$ be a sequence of extended integers.
    If $E$ is a uniformly $r$-\gls{saep}, it holds for all $\alpha\in(0,1)$ that 
    \begin{equation*}
        \limsup_{m\to\infty} \sup_{P\in\subsetProbaMeasures}P\left[\sup_{n\in\integers{r_m}} E_{m,n} \geq \frac1\alpha\right]\leq\alpha.
    \end{equation*}
    If, instead, $E$ is a uniformly $r$-\gls{aep}, it holds for all $\alpha\in(0,1)$ that \begin{equation*}
        \limsup_{m\to\infty} \sup_{P\in\subsetProbaMeasures}P\left[\sup_{n\in\integers{r_m}} E_{m,n}\pwMin t_m \geq \frac1\alpha\right]\leq\alpha,
    \end{equation*}
    where $(t_m)_{m\in\N}\subset\Rnn$ is any sequence that satisfies \cref{eq:characterization r-AEP}.
    In particular, it holds with $t_m := t$ for all $m\in\N$, where $t\in\Rnn$ is fixed.
\end{restatable}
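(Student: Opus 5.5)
The plan is to reduce to Markov's inequality at a well-chosen stopping time, exactly as in the classical proof of Ville's inequality via optional stopping. Fix $\alpha\in(0,1)$ and, for each $m\in\N$, define the first-passage time
\begin{equation*}
    \tau_m := \inf\left\{n\in\integers{r_m}\mid E_{m,n}\geq \tfrac1\alpha\right\},
\end{equation*}
with the convention that $\tau_m := r_m$ when the set is empty (if $r_m=\infty$ this means $\tau_m=\infty$). Since $E_{m,\bullet}$ is adapted to $\filtration_{m,\bullet}$, $\tau_m$ is an $\filtration_{m,\bullet}$-stopping time, and by construction $\tau_m\leq r_m$ pointwise. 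Hence $\tau=(\tau_m)_{m\in\N}\in\rBoundedStoppingTimes(r,\filtration,\subsetProbaMeasures)$.

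The key pointwise fact is that on the event $A_m:=\{\sup_{n\in\integers{r_m}}E_{m,n}\geq 1/\alpha\}$ we have $E_{m,\tau_m}\geq 1/\alpha$. When $r_m<\infty$ the supremum is a maximum over finitely many indices, so the infimum is attained and this is immediate. When $r_m=\infty$ there is a subtlety: the supremum may reach $1/\alpha$ only in the limit without any $E_{m,n}$ reaching it. I would handle this as in the standard proof: replace $1/\alpha$ by $c<1/\alpha$. The event $A_m$ is then contained in $\{\exists n: E_{m,n}\geq c\}$, on which the first passage above $c$ is finite and satisfies $E_{m,\tau_m}\geq c$. Markov's inequality then gives $P[A_m]\leq \E_P[E_{m,\tau_m}]/c$, and we let $c\uparrow 1/\alpha$ at the end, after taking $\limsup_m\sup_P$. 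Since $c$ is fixed while $m\to\infty$, the order of limits is harmless.

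For the $r$-\gls{saep} case, Markov's inequality gives $\sup_P P[A_m]\leq c^{-1}\sup_P\E_P[E_{m,\tau_m}]$. Taking $\limsup_m$ and applying \cref{def:asymptotic e-process} to $\tau$ yields the bound $1/c$, hence $\alpha$ as $c\uparrow1/\alpha$.

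For the $r$-\gls{aep} case I would apply the same argument to the process $E_{m,n}\pwMin t_m$. By \cref{prop:characterization uniform in stopping time}, this process is a uniformly $r$-\gls{saep} for any truncation sequence satisfying \cref{eq:characterization r-AEP}, and the first part then applies verbatim. For constant $t_m=t$, the process $E\pwMin t$ is an $r$-\gls{saep} directly from the definition. The only care is the case $t<1/\alpha$: there the event is empty and the bound is trivial.

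I expect the main obstacle to be the $r_m=\infty$ case, namely the non-attainment of the supremum, which the $c<1/\alpha$ device resolves.
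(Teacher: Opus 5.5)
Your proposal is correct and follows essentially the paper's proof: the first-passage time at level $1/\alpha$ restricted to $\integers{r_m}$ (set to $r_m$ when no index qualifies), then Markov's inequality, then the reduction of the $r$-\gls{aep} case to the $r$-\gls{saep} $(E_{m,n}\pwMin t_m)_{m,n\in\N}$. The only difference is your $c<1/\alpha$ device, which is valid but unnecessary: under the paper's convention $E_{m,\infty}=\limsup_{n}E_{m,n}$, a supremum equal to $1/\alpha$ that is never attained forces $\limsup_n E_{m,n}=1/\alpha$, so $E_{m,\tau_m}\geq 1/\alpha$ on that event, and the exact identity $\{\sup_{n\in\integers{r_m}}E_{m,n}\geq 1/\alpha\}=\{E_{m,\tau_m}\geq 1/\alpha\}$ used in the paper already holds.
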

\section{Construction}
\label{sec:construction}

In this section, we investigate general methods for the construction of asymptotic e-processes.
We begin with the cumulative product, which is the cornerstone construction in the nonasymptotic case and immediately runs into difficulties if one tries to avoid the double-indexing we introduce (\cref{subsec:counterexample}).
We show in \cref{subsec:cumulative-product} that the cumulative product of uniformly strongly asymptotic conditional e-variables yields an $r$-\gls{saep}, with an explicit and simple criterion on the sequence $r$ hinging on \cref{thm:asp implies asymptotic e-process}.
We also discuss in \cref{subsec:time mixture} how the time-mixture construction of \citet[Section 7.9]{ramdasHypothesisTestingEvalues2025} carries to the asymptotic case.
After that, we investigate in \cref{subsec:event partitioning} how $r$-\glspl{aep} appear by following the \emph{event partitioning} method introduced in \cite{chuggPostHocLargeSampleStatistical2026}.
Finally, we identify in \cref{subsec:anytime p-values} how the distribution-uniform anytime p-values introduced in \cite{waudby-smithDistributionuniformAnytimevalidSequential2026} are precisely \emph{uniformly $\infty$-asymptotic p-processes}, up to reindexing, which leads us to generalize them with uniformly (strongly) $r$-asymptotic p-processes.
Crucially, calibrating such processes yields $r$-\glspl{aep} and $r$-\glspl{saep}, extending the existing duality between e- and p-variables to the asymptotic setting.
In the context of the previous works of \cite{waudby-smithDistributionuniformAnytimevalidSequential2026} and \cite{chuggPostHocLargeSampleStatistical2026}, our work introduces a layer of abstraction that simplifies
the overall analysis and generalizes constructions.
\par
In what follows, we consider a nonnegative process $(e_{m,n})_{m,n\in\N}$, which represents the asymptotic e-variables used in constructing 
corresponding asymptotic e-processes.
We also define $\filtration = (\filtration_{m,n})_{m,n\in\N}$, the filtration array generated by $e$.
Each subsection imposes individual assumptions, and formal statements redefine all symbols in a self-contained way.

\subsection{Counterexample: diagonal cumulative product}\label{subsec:counterexample}
The first example we investigate is the one that arises naturally when trying the usual construction of e-processes from sequential e-variables, which are e-variables $(\bar e_n)_{n\in\N}$ such that $\E_P[\bar e_{n+1}\mid \bar e_0,\dots, \bar e_n]\leq 1$, \gls{as} and for all $n\in\N$ and $P\in\subsetProbaMeasures$ \citep[Definition 7.19]{ramdasHypothesisTestingEvalues2025}.
Given such a sequence, the cumulative product \begin{equation*}
    \bar E_n = \prod_{i=0}^n \bar e_i
\end{equation*}
is a test supermartingale and, thus, is an e-process.
\par
In the case where only asymptotic e-variables $e = (e_{m,n})_{m,n\in\N}$ are available, there are many options to account for the second indexing dimension.
We consider first the following option: \begin{equation*}
    E_{m,n} = \prod_{i=0}^{m+n}e_{i,i}.
\end{equation*}
The choice of $m+n$ as the limit to the cumulative product is not central and other options are possible; we make this choice to ground the discussion.
For an example illustrating the relevance of this general form for $E$, one may consider the standard case where an underlying data set $(X_d)_{d\in\N}$ is revealed sequentially.
Given $d+1\in\N$ observed samples, one reserves the most recently revealed sample and computes an estimator based on the other $d$ observations.
Then, one computes a statistic denoted as $f_{d+1}$ involving the reserved sample and the estimator.
Mirroring the nonasymptotic construction $\bar E$, one is lead to consider the cumulative product $F_n = \prod_{d=0}^n f_d$ as a candidate signal to monitor.
This is for instance one of the examples in \cite{berrettConditionalPermutationTest2020}, though in a context without e-processes.
The construction of $E$ above simply frames this in the double-indexed framework considered in this work, by defining $e_{i,j} := f_{i+j}$ and $E_{m,n} = F_{m+n}$.
\par
In general, the process $E$ is not an asymptotic e-process without strong assumptions on the variables $e$.
For instance, consider the case where for all $i\in\N$ and $P\in\subsetProbaMeasures$, $\E_P[e_{i+1,i+1}\mid \filtration_{i,i}] \geq 1 + \varepsilon_i$, with $\varepsilon_i>0$ a non-random sequence (possibly vanishing asymptotically).
Then, for all $m,n\in\N$, \begin{equation*}
    \E_P[E_{m,n}] \geq \E_P[e_{0,0}]\cdot\prod_{i=0}^{m+n -1}(1+\varepsilon_i).
\end{equation*}
Assuming $\E_P[e_{0,0}] = 1$, this shows that $\E_P[E_{m,n}]$ is certainly not less than $1$ in limit superior for any $n\in\N$, from which it follows that $E$ is not an $r$-\gls{saep} for any sequence $r$.
A similar reasoning shows that $E$ is not an $r$-\gls{aep} for any sequence $r$ if for all $i\in\N$, $P\in\subsetProbaMeasures$, and $t\in\R$, $\E_P[e_{i+1,i+1}\pwMin t\mid \filtration_{i,i}] \geq 1 + \varphi_i(t)$, where $\varphi_i:\Rnn\to\Rnn$ is nonzero.
\par
An intuitive explanation for this negative result is the observation that the expectations $\E_P[E_{m,n}]$ and $\E_P[E_{m,n}\pwMin t]$ are heavily influenced by the properties of variables $e_{j,i}$ for low values of $i+j$.
These variables may be poor approximations of true e-variables and their (conditional) expectations may exceed $1$; such past excesses are never ``forgotten'' due to the cumulative product construction.
This hints that the issue lies in the cumulation of the approximation errors
$\varepsilon_m$, 
and that a possible fix is to allow a form of forgetting along that index.
There are many options for such forgetting; the next section explores a simple one.

\subsection{Cumulative product}
\label{subsec:cumulative-product}
We consider the process given by 
\begin{equation*}
    E_{m,n} = \prod_{i = 0}^n e_{m,i}.
\end{equation*}
In light of the observations of \cref{subsec:counterexample}, this corresponds to a total forgetting; $E_{m,n}$ only involves the variables $(e_{m,i})_{i\in\N}$.
Under the assumption that $e$ satisfies a conditional version of the defining property of uniformly strongly asymptotic e-variables, then $E$ is an $r$-\gls{saep} for a suitable sequence $r$.
\begin{restatable}{theorem}{cumulativeProduct}\label{thm:cumulative product}
    Let $(e_{m,n})_{m,n\in\N}$ be a nonnegative process adapted to a filtration sequence $\filtration = (\filtration_{m,n})_{m,n\in\N}$, and define \begin{equation}
        \quad E_{m,n} = \prod_{i=0}^ne_{m,i}.
    \end{equation}
    Assume that, for all $m\in\N$, there exists $\varepsilon_m\in\Rnn$ with \begin{equation}\label{eq:conditionally uniformly SAEV}
        \forall P\in\subsetProbaMeasures,\forall n\in\N,\quad\E_P[e_{m,n+1}\mid\filtration_{m,n}]\leq 1 + \varepsilon_m,\quad P\text{-\gls{as}}
    \end{equation}
    If $\sup_{P\in\subsetProbaMeasures}\E_{P}[e_{\bullet,0}]$ is bounded and $\varepsilon_m\to0$ as $m\to\infty$, then $E$ has the \gls{asp} for $\filtration$ uniformly in $\subsetProbaMeasures$.
    If, additionally, $e_{\bullet,0}$ is a uniformly strongly asymptotic e-variable, then so is $e_{\bullet,n}$ for all $n\in\N$, and $E$ is asymptotically calibrated for $\subsetProbaMeasures$.
    In particular, $E$ is a uniformly $r$-\gls{saep} for $\subsetProbaMeasures$ and $\filtration$ for any sequence $r = (r_m)_{m\in\N}\subset\N$ such that $r_m\cdot \varepsilon_m \to 0$ as $m\to\infty$.
\end{restatable}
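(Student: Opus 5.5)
The plan is to compute the drift $\eta_{m,n}$ of the cumulative product explicitly, bound its positive part, and then obtain the $r$-\gls{saep} conclusion from \cref{thm:asp implies asymptotic e-process}.

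\textbf{Drift of the product.} Since $E_{m,n}$ is $\filtration_{m,n}$-measurable and nonnegative, the tower property gives, $P$-\gls{as},
\begin{equation*}
\E_P[E_{m,n+1}\mid\filtration_{m,n}] = E_{m,n}\,\E_P[e_{m,n+1}\mid\filtration_{m,n}]\leq (1+\varepsilon_m)E_{m,n}.
\end{equation*}
Hence $\eta_{m,n}\leq\varepsilon_m E_{m,n}$ and $\eta_{m,n}^+\leq \varepsilon_m E_{m,n}$.

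\textbf{Integrability and the \gls{asp}.} Iterating the tower property yields $\E_P[E_{m,n}]\leq \E_P[e_{m,0}](1+\varepsilon_m)^n$. Let $C:=\sup_m\sup_{P\in\subsetProbaMeasures}\E_P[e_{m,0}]<\infty$. This bound shows that $E$ is $\subsetProbaMeasures$-integrable, which is the standing assumption of \cref{def:asp}. It also gives
\begin{equation*}
\sup_P\E_P[\eta_{m,n}^+]\leq C\varepsilon_m(1+\varepsilon_m)^n.
\end{equation*}
For fixed $n$ the right-hand side tends to $0$ as $m\to\infty$, because $\varepsilon_m\to0$. This is exactly the \gls{asp}.

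For the second claim, the same iterated bound gives $\sup_P\E_P[e_{m,n}]\leq \sup_P\E_P[e_{m,0}](1+\varepsilon_m)^n$, hence also $\sup_P\E_P[E_{m,n}]\leq \sup_P\E_P[e_{m,0}](1+\varepsilon_m)^n$. The factor $\E_P[e_{m,n}]$ is bounded by $1+\varepsilon_m$ via the conditional assumption for $n\geq1$, and by the limsup hypothesis on $e_{\bullet,0}$ for $n=0$. Taking $\limsup_m$ then shows that each $e_{\bullet,n}$ is a uniformly strongly asymptotic e-variable. Asymptotic calibration is immediate because $E_{m,0}=e_{m,0}$.

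\textbf{Verifying \cref{eq:r-summable increments}.} This is the step where care is needed, since the compounding factor has to be controlled. Summing the bound on $\eta^+_{m,n}$,
\begin{equation*}
\sup_P\E_P\Bigl[\sum_{n=0}^{r_m-1}\eta_{m,n}^+\Bigr]\leq C\varepsilon_m\sum_{n=0}^{r_m-1}(1+\varepsilon_m)^n = C\bigl((1+\varepsilon_m)^{r_m}-1\bigr)\leq C\bigl(e^{r_m\varepsilon_m}-1\bigr).
\end{equation*}
The last expression tends to $0$ when $r_m\varepsilon_m\to0$. The exchange of $\sup_P$ and the sum is harmless because each term is bounded uniformly in $P$. \Cref{thm:asp implies asymptotic e-process} then yields that $E$ is a uniformly $r$-\gls{saep}.

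The main subtlety is ensuring the bound uses the geometric sum, not a crude $r_m\cdot\sup_n$ estimate. It is also what makes $r_m\varepsilon_m\to0$ exactly the right condition, including the edge case where $\varepsilon_m=0$ and $r_m$ may be arbitrary.
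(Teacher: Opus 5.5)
Your proof is correct and follows the paper's argument: the bound $\eta_{m,n}^+\le\varepsilon_m E_{m,n}$, the inductive estimate $\E_P[E_{m,n}]\le\E_P[e_{m,0}](1+\varepsilon_m)^n$, the geometric sum $(1+\varepsilon_m)^{r_m}-1$, and an appeal to \cref{thm:asp implies asymptotic e-process}. One small slip is that the claim $\sup_{P}\E_P[e_{m,n}]\le\sup_{P}\E_P[e_{m,0}](1+\varepsilon_m)^n$ does not follow from the iteration, which concerns $E_{m,n}$, and it can fail (e.g.\ $e_{m,0}=0$, $e_{m,n}=1$ for $n\ge1$); the direct bound $\E_P[e_{m,n}]\le 1+\varepsilon_m$ for $n\ge1$ that you give immediately afterwards is the correct justification.
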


\paragraph*{Discussion of other forms of forgetting\preprintdot}
The example presented in this section can be seen as an ``extreme'' form of forgetting along the $m$-axis: the value of the candidate e-process at index $(m,n)\in\N^2$ only involves the variables $(e_{m,i})_{i\in\N}$.
One may consider milder options, such as \begin{align*}
    E_{m+1,n} = E_{m,n}^{\lambda_m}\cdot\prod_{i=0}^{n} e_{m,i},\quad\text{or}\quad
    E_{m+1,n} = C_{m}\left[\lambda_m E_{m,n} + \prod_{i=0}^{n} e_{m,i}\right],
\end{align*}
where $\lambda = (\lambda_m)_{m\in\N}\subset[0,1)$ is a sequence of forgetting parameters, and $C = (C_m)_{m\in\N}$ is a sequence of normalization constants that depends only on $\lambda$ (ensuring for instance that $E_{m,\bullet}$ is indeed an e-process for all $m\in\N$ if $e$ consists of mutually independent, exact e-variables).
We expect that introducing such a recurrence leads to more constraining requirements on $e$ for such processes to have the \gls{asp} compared to the condition that $\varepsilon_m\to 0$ in \cref{thm:cumulative product}; indeed, the conditions should probably impose that $d$ decays faster than a speed parameterized by $\lambda$.
In turn, such asymptotic e-processes may have desirable properties in the specific application at hand (such as power in statistical testing, or improved stability upon increasing the index $m$).
In this work, we limit ourselves to illustrating simple ways of constructing asymptotic e-processes, and leave such explorations for future work.

\subsection{Weighted average}\label{subsec:time mixture}
We extend the construction of \citet[Section 7.9]{ramdasHypothesisTestingEvalues2025} to the asymptotic case.
Assume that $e$ consists of uniformly strongly asymptotic
e-variables and let, for all $P \in \subsetProbaMeasures$ and $m,n \in \N$,
$\E_P[ e_{m,n}] \leq 1 + \varepsilon_{m,n}^P$, with $\varepsilon_{m,n}^P \geq 0 $ and
$\varepsilon^P_{m,n} \to 0$ as $m \to \infty$ for every $n \in \N$.
For a collection of deterministic
nonnegative weights $(w_{m,n})_{m,n\in\N}$
such that $\limsup_{m\to\infty}\sum_{i =0}^\infty w_{m,i} \leq 1,$
we consider the weighted average process
(also called \emph{time mixture process},
\citealp{ramdasHypothesisTestingEvalues2025}) given by
\begin{equation*}
    E_{m,n} := \sum_{i=0}^n w_{m,i}\, e_{m,i}.
\end{equation*}
For every integer sequence $r=(r_m)_{m\in\N}$ and stopping time $\tau
\in \rBoundedStoppingTimes(r,\filtration,\subsetProbaMeasures)$, we have
\begin{align*}
\mathbb{E}_P[E_{m,\tau}]
&= \mathbb{E}_P\!\left[\sum_{t=0}^\infty \indicator_{\{\tau=t\}}
   \sum_{i=0}^t w_{m,i} e_{m,i}\right] \\
&= \mathbb{E}_P\!\left[\sum_{i=0}^\infty w_{m,i} e_{m,i}
   \sum_{t=i}^\infty \indicator_{\{\tau=t\}}\right] \\
&= \mathbb{E}_P\!\left[\sum_{i=0}^\infty w_{m,i} e_{m,i}\indicator_{\{\tau\ge i\}}\right] \\
&\le \sum_{i=0}^{r_m} w_{m,i}\,\mathbb{E}_P[e_{m,i}] \\
&\le \sum_{i=0}^{r_m} w_{m,i}(1+\varepsilon_{m,i}^P) \\
&\le 1+\sum_{i=0}^{r_m} w_{m,i}\varepsilon_{m,i}^P.
\end{align*}
Hence, 
this process is an $r$-\gls{saep}
whenever 
\begin{equation*}\lim_{m \to \infty}  \sup_{P \in \subsetProbaMeasures}
\sum_{i=0}^{r_m} w_{m,i} \varepsilon^P_{m,i} = 0.\end{equation*}
\par
Additionally we may assume that, for every $n \in \N$,
each asymptotic e-variable $(e_{m,n})_{m\in\N}$ converges in $L_1$ to an e-variable $e_n$ uniformly over $\subsetProbaMeasures$, 
in the sense that \begin{equation*}
    \lim_{m\to\infty}\sup_{P\in\subsetProbaMeasures}\E_P[\lvert e_{m,n} - e_n\rvert] = 0.
\end{equation*}
Also assume that $w$ converges to a probability mass function on $\N$, in the sense that $w_n = \lim_{m\to\infty} w_{m,n}$ exists for all $n\in\N$, and $\sum_{n=0}^\infty w_n = 1$.
Then, one can define the limit process \begin{equation*}
    S_n = \sum_{i=0}^n w_{i}e_i, \forall n\in\N,
\end{equation*}
which is an e-process by the same reasoning as above
\citep[see also][Proposition 7.23]{ramdasHypothesisTestingEvalues2025}.
Yet, it is also nondecreasing, and is not a supermartingale without additional assumptions.
This makes $E$ an $r$-\gls{saep} that does not have the \gls{asp}, illustrating the discussion in \cref{subsec:asymptotic e-processes without the asp}.
\par
The overall reasoning carries over to $r$-\glspl{aep}.
If $e$ consists only of uniformly asymptotic e-variables, define for all $P \in \subsetProbaMeasures$ and $m,n \in \N$ constants $\delta_{m,n}^P\in\Rnn$ and $\varepsilon_{m,n}^P\in\Rnn$ such that, for all $t\in\Rnn$,
$\E_P[e_{m,n}\pwMin t] \leq 1 + \varepsilon_{m,n}^P + \delta_{m,n}^P\cdot t$, with $\varepsilon_{m,n}^P \geq 0 $, $\delta_{m,n}^P \geq 0$, and
$\varepsilon^P_{m,n} \to 0$ and $\delta^P_{m,n} \to 0$ as $m \to \infty$ for every $n \in \N$.
Taking a collection of weights $w$ as above, similar computations show that for every integer sequence $r=(r_m)_{m\in\N}$, $\tau\in\rBoundedStoppingTimes(r,\filtration,\subsetProbaMeasures)$, and $t\in\Rnn$, \begin{align*}
    \E_{P}[E_{m,\tau}\pwMin t] 
        &\leq \E_P\left[\left(\sum_{i=0}^{r_m}w_{m,i}\cdot e_{m,i}\right)\pwMin t\right]\\
        &\leq \sum_{i=0}^{r_m}\E_P\left[w_{m,i}\cdot e_{m,i}\pwMin t\right]\\
        &\leq \sum_{i=0}^{r_m}w_{m,i} (1+\varepsilon_{m,i}^P + \delta_{m,i}^P\cdot t),
\end{align*}
where the second inequality follows from $(a+b)\pwMin t\leq a\pwMin t + b\pwMin t$ for all $a,b\geq 0$.
As a result, it suffices that \begin{equation*}
\lim_{m \to \infty}  \sup_{P \in \subsetProbaMeasures}
\sum_{i=0}^{r_m} w_{m,i} \varepsilon^P_{m,i} = 0,\quad\text{and}\quad\lim_{m \to \infty}  \sup_{P \in \subsetProbaMeasures}
\sum_{i=0}^{r_m} w_{m,i} \delta^P_{m,i} = 0,
\end{equation*}
for this process to be an $r$-\gls{aep}.

\subsection{Domination, event partitioning, and burn-in}\label{subsec:event partitioning}

To motivate the next construction and understand its guarantees, let us introduce a motivating example.
Consider the case where a (single-indexed) data stream $X = (X_n)_{n\in\N}$ adapted to a filtration $\mathcal G = (\mathcal G_n)_{n\in\N}$ is available, and one has identified a measurable function $\mathcal E$ such that, for some parameter $\theta^\ast\in\R$, the process $E^\ast := (\mathcal E(X_{0:n},\theta^\ast))_{n\in\N}$ is an e-process (for $\mathcal G$ and $\subsetProbaMeasures$).
Yet, the parameter $\theta^\ast$ is assumed to be unknown, such that $E^\ast$ cannot be computed.
Such a setup explicitly appears in \citet[Section 3.4]{chuggPostHocLargeSampleStatistical2026}.
\par
A natural idea is to investigate what happens when one plugs in an \emph{estimator} of $\theta^\ast$ in $\mathcal E$, and   whether this results in an approximate or asymptotic e-process, in some sense.
In other words, one constructs the process $E := (\mathcal E(X_{0:n},\hat\theta_m))_{m,n\in\N}$, where $\hat\theta := (\Theta(X_{0:m}))_{m\in\N}$ is an estimator of $\theta^\ast$, for some measurable function $\Theta$.
This procedure can be reasonably described as a ``burn-in'' method, since monitoring $E_{m,\bullet}$ involves first collecting $m$ samples to evaluate $\hat \theta_m$; the process $E_{m,\bullet}$ is adapted to $\mathcal G_{m\pwMax\bullet}$.
As a result, the question is whether $E$ is an asymptotic e-process for $\subsetProbaMeasures$ and the filtration array defined as $\filtration_{m,n} = \mathcal G_{m\pwMax n}$, for $m,n\in\N$.
A consequence of the results of this section is that, under a monotonicity condition on $\mathcal E$, the process $\hat \theta$ can be modified such that $E$ is an $\infty$-\gls{aep} as long as $\hat \theta$ converges in probability.
The proof method involves altering $\hat\theta$ to construct, for every $m\in\N$, an \emph{overestimation} of $\theta^\ast$, and then leveraging monotonicity to upper bound the modified $E$ with $E^\ast$ on high-probability sets.
\par
Before giving the result, we discuss an immediate objection to the above construction, which is the reason why it is \emph{not} used in \citet[Proposition 4.4]{chuggPostHocLargeSampleStatistical2026} despite the reference motivating asymptotic e-processes from a burn-in perspective.
Indeed, at index $(m,n)\in\N^2$, with $m\leq n$ so that $E_{m,n}$ can be computed from $X_{0:n}$, the variable $E_{m,n}$ defined above leverages the estimator $\hat\theta_m$, but could in principle rely on $\hat\theta_n$ instead, since all of the data required for its computation is available (it is $\filtration_{m,n} = \mathcal G_n$-measurable).
That is, one is restricting the amount of data used in the estimation of $\theta^\ast$ and, hereby, the quality of the approximation.
Using instead all available data for the estimator leads to monitoring the process $F = (\mathcal E(X_{0:n},\hat\theta_n))_{n\in\N}$.
However, it is not directly an e-process in general, and one may wonder in what sense $F$ is an asymptotic e-process.
In particular, asymptotic e-processes are an inherently bi-indexed notion as per the preceding developments, and yet $F$ is single-indexed.
The key resides in understanding the dual role of the single index $n\in\N$; it controls both how well $\hat\theta$ approximates $\theta^\ast$ and the time along which the operator monitors.
The question asked by the statistician is then: ``when monitored from index $n\in\N$ on, how far is $F$ from being an e-process?''
Denoting the index at which monitoring starts as $m\in\N$, the previous question can be rephrased as whether the bi-indexed process $(F_{m+n})_{m,n\in\N}$ is an $\infty$-\gls{aep} or an $\infty$-\gls{saep}.
The results of this section answer this question positively in the case where $\mathcal E$ satisfies again the monotonicity condition.
It involves, however, modifying $\hat\theta$ such that the following stronger property holds: for all $m\in\N$, $\hat\theta_n$ overestimates $\theta^\ast$ for all $n\geq m$ (as opposed to only for $n=m$ previously), with high probability.
This leads to requiring the \emph{almost sure} convergence of $\hat\theta$ to $\theta^\ast$ instead of its convergence in probability.
\par
Let us summarize the above construction and identify its key arguments to formalize it.
The first core idea is that of an \emph{asymptotically almost-sure domination uniformly in $\subsetProbaMeasures$}: a process $E = (E_{m,n})_{m,n\in\N}$ that is upper-bounded by an $r$-\gls{aep} $E^\ast=(E^\ast_{m,n})_{m,n\in\N}$ on sets of asymptotically full probability (uniformly in $P\in\subsetProbaMeasures$) is itself an $r$-\gls{aep}.
The construction above uses for the upper-bound a true e-process independent of $m$, which is why its conclusions involve $\infty$-\glspl{aep}, but the generalization need not be restricted to those choices.
The second main argument is an immediate consequence: if one is able to construct an $\filtration_{m,n}$-measurable overapproximation of a parameter involved in an $r$-\gls{aep} and the function defining the $r$-\gls{aep} is nonincreasing in that parameter, the result is again an $r$-\gls{aep}.
The last argument is based on identifying when such overapproximations are possible in the presence of estimators that converge, in probability or almost surely.
\par
The first idea is formalized in the next result.
It can be seen as a generalization of \citet[Observation 3.7]{chuggPostHocLargeSampleStatistical2026} and of \citet[Proposition 3.2]{ignatiadisAsymptoticCompoundEvalues2024} to $r$-\glspl{aep}, and is used on a specific example in the proof of \citet[Proposition 4.4]{chuggPostHocLargeSampleStatistical2026}.
The latter reference calls this construction \emph{event partitioning}; we favor the terminology of domination by another $r$-\gls{aep} to emphasize the role played by the process serving as an upper bound.
\begin{restatable}[Asymptotically almost-sure domination]{theorem}{domination}\label{thm:domination}
    Let $E^\ast = (E^\ast_{m,n})_{m,n\in\N}$ and $E = (E_{m,n})_{m,n\in\N}$ be nonnegative processes adapted to a filtration sequence $\filtration$.
    Assume that there exists an extended integer sequence $r=(r_{m})_{m\in\N}$ such that $E^\ast$ is a uniformly $r$-\gls{aep}, and that \begin{equation*}
        \lim_{m\to\infty}\inf_{P\in\subsetProbaMeasures}P\left[A_m\right] = 1,\quad\text{where}\quad A_m := \left\{\forall n\in\integers{r_m},\quad E_{m,n}\leq E^\ast_{m,n}\right\}\in\sigAlg,\quad\forall m\in\N.
    \end{equation*}
    Then $E$ is a uniformly $r$-\gls{aep} for $\filtration$ and $\subsetProbaMeasures$.
    Furthermore, any truncation sequence\footnote{We remind the reader that this term is defined in \cref{prop:characterization uniform in stopping time}.} $t = (t_m)_{m\in\N}\subset\Rnn$ of $E^\ast$ such that $t_m\cdot\sup_{P\in\subsetProbaMeasures}P[A_m^\complement] \to 0$ as $m\to\infty$ is a truncation sequence of $E$, and there exists at least one such sequence.
\end{restatable}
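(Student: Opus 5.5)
The plan is to split the truncated expectation on $A_m$ and its complement. Fix $\tau=(\tau_m)_{m\in\N}\in\rBoundedStoppingTimes(r,\filtration,\subsetProbaMeasures)$ and $t\in\Rnn$. For every $P\in\subsetProbaMeasures$, the set $\{\tau_m\leq r_m\}$ has full measure. On $A_m\cap\{\tau_m\leq r_m\}$ we have $E_{m,\tau_m}\leq E^\ast_{m,\tau_m}$; when $r_m=\infty$ and $\tau_m=\infty$, this uses the convention $E_{m,\infty}=\limsup_n E_{m,n}$, which is monotone under pointwise domination for all $n\in\N$. Hence, $P$-\gls{as},
\begin{equation*}
    E_{m,\tau_m}\pwMin t \leq (E^\ast_{m,\tau_m}\pwMin t)\indicator_{A_m} + t\,\indicator_{A_m^\complement}\leq E^\ast_{m,\tau_m}\pwMin t + t\,\indicator_{A_m^\complement}.
\end{equation*}
Taking expectations and the supremum over $P$ gives
\begin{equation*}
    \sup_{P}\E_P[E_{m,\tau_m}\pwMin t]\leq \sup_P\E_P[E^\ast_{m,\tau_m}\pwMin t] + t\sup_P P[A_m^\complement].
\end{equation*}
The hypothesis that $\tau$ is a stopping-time sequence for $\filtration$, and thus admissible for $E^\ast$ (both processes being adapted to the same $\filtration$), together with the $r$-\gls{aep} property of $E^\ast$, bounds the limit superior of the first term by $1$. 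The second term vanishes because $t$ is fixed and $\inf_P P[A_m]\to1$. This proves that $E$ is a uniformly $r$-\gls{aep}.

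For the truncation-sequence claim, I would repeat the same computation with $t$ replaced by $t_m$ and with $\sup_\tau$ taken inside, using the characterization of \cref{prop:characterization uniform in stopping time}. The first term is then controlled by \cref{eq:characterization r-AEP} for $E^\ast$. The second is $t_m\sup_P P[A_m^\complement]\to0$ by assumption, so \cref{eq:characterization r-AEP} holds for $E$ with the same $t$.

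It remains to show that such a sequence exists. Let $\delta_m:=\sup_P P[A_m^\complement]\to0$, and let $s$ be any truncation sequence of $E^\ast$, which exists by \cref{prop:characterization uniform in stopping time}. Define $t_m:=s_m\pwMin \delta_m^{-1/2}$, with $\delta_m^{-1/2}:=\infty$ when $\delta_m=0$. Then $t_m\to\infty$ and $t_m\delta_m\leq\delta_m^{1/2}\to0$. Since $t_m\leq s_m$, we have $E^\ast\pwMin t_m\leq E^\ast\pwMin s_m$, so $t$ remains a truncation sequence of $E^\ast$.

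The main obstacle is minor bookkeeping. One point is the case $\tau_m=\infty$, which is handled via the limsup convention; alternatively, \cref{clry:characterizations r-AEP} reduces everything to finite stopping times. The other point is that the null sets where $\tau_m>r_m$ depend on $P$, which is harmless because the bound is applied separately for each $P$.
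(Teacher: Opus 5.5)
Your proof is correct and follows the paper's argument: you split $\E_P[E_{m,\tau_m}\pwMin t]$ over $A_m$ and $A_m^\complement$, then reapply the bound with $t=t_m$ to get the truncation-sequence claim. The only difference is the existence step, where the paper just invokes a diagonal argument akin to \cref{clry:rate of uniform boundedness 2}; your explicit choice $t_m=s_m\pwMin\delta_m^{-1/2}$, built from an existing truncation sequence $s$ of $E^\ast$, is more direct and easy to check, and you also handle the case $\tau_m=\infty$ via the $\limsup$ convention, which the paper leaves implicit.
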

As announced, it follows immediately that overestimating a parameter on which the process depends nonincreasingly yields an $r$-\gls{aep}.
The following result uses the notation $\bar\theta$ for the overestimation to distinguish it from the consistent estimator $\hat\theta$ mentioned in the introductory paragraph; clarifying the relationship between the two is the purpose of a subsequent result.
\begin{restatable}[Domination via monotonicity and overestimation]{corollary}{dominationViaMonotonicity}\label{clry:domination via monotonicity}
    Let $\filtration = (\filtration_{m,n})_{m,n\in\N}$ be a filtration sequence and $E = (E_{m,n}(\theta))_{(m,n,\theta)\in\N\times\N\times\R}$ be a nonnegative process such that the map $(\omega,\theta)\in\Omega\times\R\mapsto E_{m,n}(\theta)(\omega)$ is $\filtration_{m,n}\otimes\mathcal B(\R)$-measurable, for $m,n\in\N$, where $\mathcal B(\R)$ denotes the Borel $\sigma$-algebra on $\R$.
    Let $\bar\theta = (\bar\theta_{m,n})_{m,n\in\N}$ be a finite real process adapted to $\filtration$, let $\theta^\ast\in\R$, and let $r$ be an extended integer sequence.
    Assume that the following holds: \begin{enumerate}[label=(\roman*)]
        \item \label{item:true aep} $E(\theta^\ast)$ is a uniformly $r$-\gls{aep} for $\subsetProbaMeasures$ and $\filtration$,
        \item \label{item:domination} for any $m,n\in\N$ and $\omega\in\Omega$, the function $E_{m,n}(\cdot)(\omega)$ is nonincreasing on $[\theta^\ast,\infty)$,
        \item \label{item:overestimation} it holds that \begin{equation}\label{eq:overestimation}
            \lim_{m\to\infty}\inf_{P\in\subsetProbaMeasures} P[D_m] = 1, \quad\text{where}\quad D_m := \left\{\forall n\in\integers{r_m}, \bar\theta_{m,n}\geq\theta^\ast\right\}\in\sigAlg,\quad\forall m\in\N.
        \end{equation}
    \end{enumerate}
    Then, the process $E := (E_{m,n}(\bar\theta_{m,n}))_{m,n\in\N}$ is a uniformly $r$-\gls{aep}.
    Furthermore, any truncation sequence $t = (t_m)_{m\in\N}\subset\Rnn$ of $E(\theta^\ast)$ such that $t_m\cdot\sup_{P\in\subsetProbaMeasures}P[D_m^\complement] \to 0$ as $m\to\infty$ is a truncation sequence of $E$, and there exists at least one such sequence.
\end{restatable}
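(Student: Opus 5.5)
The plan is to reduce the statement directly to \cref{thm:domination}, taking $E^\ast := E(\theta^\ast) = (E_{m,n}(\theta^\ast))_{m,n\in\N}$ as the dominating process. By assumption \labelcref{item:true aep}, $E^\ast$ is a uniformly $r$-\gls{aep} for $\subsetProbaMeasures$ and $\filtration$. It therefore suffices to check three things: that the plugged-in process $E := (E_{m,n}(\bar\theta_{m,n}))_{m,n\in\N}$ is a nonnegative process adapted to $\filtration$, and that the domination events $A_m$ of \cref{thm:domination} have asymptotically full probability uniformly in $\subsetProbaMeasures$.

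\textbf{Adaptedness.} The map $\omega\mapsto(\omega,\bar\theta_{m,n}(\omega))$ is $\filtration_{m,n}$-to-$\filtration_{m,n}\otimes\mathcal B(\R)$ measurable, because $\bar\theta_{m,n}$ is finite and $\filtration_{m,n}$-measurable. Composing it with the jointly measurable map $(\omega,\theta)\mapsto E_{m,n}(\theta)(\omega)$ shows that $E_{m,n}(\bar\theta_{m,n})$ is $\filtration_{m,n}$-measurable. Nonnegativity is inherited from $E$.

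\textbf{Inclusion of events.} Fix $m$ and $\omega\in D_m$. For every $n\in\integers{r_m}$ we have $\bar\theta_{m,n}(\omega)\geq\theta^\ast$. By monotonicity \labelcref{item:domination}, this gives $E_{m,n}(\bar\theta_{m,n}(\omega))(\omega)\leq E_{m,n}(\theta^\ast)(\omega)$. Hence $D_m\subset A_m := \{\forall n\in\integers{r_m},\ E_{m,n}\leq E^\ast_{m,n}\}$. One should confirm that $A_m\in\sigAlg$; this holds because it is a countable intersection of measurable events. Consequently $\inf_P P[A_m]\geq\inf_P P[D_m]\to1$ by \labelcref{item:overestimation}, and \cref{thm:domination} yields that $E$ is a uniformly $r$-\gls{aep}.

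\textbf{Truncation sequences.} Since $A_m^\complement\subset D_m^\complement$, we have $t_m\sup_P P[A_m^\complement]\leq t_m\sup_P P[D_m^\complement]\to0$. So any truncation sequence of $E(\theta^\ast)$ satisfying the stated condition also satisfies the hypothesis in \cref{thm:domination}, and is therefore a truncation sequence of $E$.

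For existence, I would not invoke \cref{thm:domination}'s existence claim directly, since that claim concerns $A_m$ rather than $D_m$. Instead I would construct the sequence by hand. Start from any truncation sequence $t'$ of $E^\ast$, which exists by \cref{prop:characterization uniform in stopping time}. Define $t_m := t'_m\pwMin\left(\sup_P P[D_m^\complement]\right)^{-1/2}$, with the convention that the second term is $\infty$ when the supremum vanishes. This $t_m$ still diverges, because both terms diverge. It also satisfies $t_m\sup_P P[D_m^\complement]\to0$.

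The main (mild) obstacle is checking that a smaller but still divergent sequence remains a truncation sequence of $E^\ast$. This follows from monotonicity of truncation: $E^\ast_{m,\tau}\pwMin t_m\leq E^\ast_{m,\tau}\pwMin t'_m$, so \eqref{eq:characterization r-AEP} is preserved when $t'$ is replaced by $t$.
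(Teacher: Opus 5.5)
Your proposal is correct and follows the paper's proof: you reduce to \cref{thm:domination} with $E^\ast = E(\theta^\ast)$, using joint measurability for adaptedness of $E$ and monotonicity for the inclusion $D_m\subset A_m$. Your explicit construction $t_m = t'_m \pwMin \bigl(\sup_{P\in\subsetProbaMeasures} P[D_m^\complement]\bigr)^{-1/2}$ for the existence claim supplies a detail the paper leaves implicit: the paper only cites \cref{thm:domination}, whose existence statement is phrased in terms of $A_m$ rather than $D_m$.
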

For practical purposes, there is a simple way to consistently overestimate scalar parameters if one has at hand a consistent estimator.
Depending on the strength of the convergence at hand, this results in a variety of possible overestimations.
\begin{restatable}[Overestimation via convergence]{lemma}{overestimationViaConvergence}\label{lemma:overestimation via convergence}
    Let $\hat\theta = (\hat\theta_m)_{m\in\N}$ be a finite real process and let $\theta^\ast\in\R$.
    If $\hat\theta_m\to\theta^\ast$ as $m\to\infty$ in probability uniformly in $\subsetProbaMeasures$; that is, if \begin{equation}\label{eq:uniform convergence in probability}
        \forall \epsilon>0,\quad\lim_{m\to\infty} a_m(\epsilon) = 0,\quad\text{where}\quad a_m(\epsilon) := \sup_{P\in\subsetProbaMeasures} P[\lvert \hat\theta_m-\theta^\ast\rvert>\epsilon],\quad\forall m\in\N,
    \end{equation}
    then there exist nonnegative sequences $(\epsilon_m)_{m\in\N}$ and $t=(t_m)_{m\in\N}$ such that $\epsilon_m\to0$, $t_m\to\infty$, and $t_m\cdot a_m(\epsilon_m)\to 0$ as $m\to\infty$.
    In particular, the process $\bar\theta$ defined as $\bar\theta_{m,n} = \hat\theta_m + \epsilon_m$, $m,n\in\N$, satisfies \cref{clry:domination via monotonicity}\labelcref{item:overestimation}, and the sequence $t$ satisfies $t_m\cdot\sup_{P\in\subsetProbaMeasures}P[D_m^\complement]\to0$ as $m\to\infty$, where $(D_m)_{m\in\N}$ is defined in \cref{eq:overestimation}.
    \par
    If $\hat\theta_m\to\theta^\ast$ as $m\to\infty$ \gls{as} uniformly in $\subsetProbaMeasures$; that is, if \begin{equation}\label{eq:uniform as convergence}
        \forall \epsilon>0,\quad\lim_{m\to\infty} a_m^\prime(\epsilon) = 0,\quad\text{where}\quad a_m^\prime(\epsilon) := \sup_{P\in\subsetProbaMeasures} P[\lvert \sup_{k\geq m}\hat\theta_k-\theta^\ast\rvert>\epsilon],\quad\forall m\in\N,
    \end{equation}
    then there exist nonnegative sequences $(\epsilon_m)_{m\in\N}$ and $t=(t_m)_{m\in\N}$ such that $\epsilon_m\to0$, $t_m\to\infty$, and $t_m\cdot a_m^\prime(\epsilon_m)\to 0$ as $m\to\infty$.
    In particular, for every integer-valued process $b = (b_{m,n})_{m,n\in\N}$, the process $\bar\theta$ defined as $\bar\theta_{m,n} = \hat\theta_{m+b_{m,n}} + \epsilon_m$, $m,n\in\N$, satisfies \cref{clry:domination via monotonicity}\labelcref{item:overestimation}, and the sequence $t$ satisfies $t_m\cdot\sup_{P\in\subsetProbaMeasures}P[D_m^\complement]\to0$ as $m\to\infty$, where $(D_m)_{m\in\N}$ is defined in \cref{eq:overestimation}.
\end{restatable}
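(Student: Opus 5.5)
The plan is to prove both parts with the same device: a diagonal construction. The sequence $\epsilon_m$ will decrease slowly enough that $a_m(\epsilon_m)$ still vanishes, and $t_m$ will be chosen as a slowly diverging function of the resulting rate. The overestimation claims then follow from a direct inclusion of events.

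\textbf{Convergence in probability.} Set $\epsilon^{(k)} := 2^{-k}$. Since $a_m(\epsilon^{(k)})\to0$ for each fixed $k$, we can pick a strictly increasing sequence of integers $M_1<M_2<\dots$ such that $a_m(\epsilon^{(k)})\leq 4^{-k}$ for all $m\geq M_k$. Define $k(m) := \max\{k\mid M_k\leq m\}$, with $k(m)=0$ and $\epsilon_m=1$, $t_m=0$ when $m<M_1$. Then set $\epsilon_m := 2^{-k(m)}$ and $t_m := 2^{k(m)}$. Because $k(m)\to\infty$, we get $\epsilon_m\to0$ and $t_m\to\infty$. Moreover $t_m a_m(\epsilon_m)\leq 2^{k(m)}4^{-k(m)} = 2^{-k(m)}\to0$, since $m\geq M_{k(m)}$.

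For the overestimation $\bar\theta_{m,n}=\hat\theta_m+\epsilon_m$, observe that
\[
\{\lvert\hat\theta_m-\theta^\ast\rvert\leq\epsilon_m\}\subset\{\hat\theta_m+\epsilon_m\geq\theta^\ast\}\subset D_m,
\]
because $\bar\theta_{m,n}$ does not depend on $n$. Hence $\sup_P P[D_m^\complement]\leq a_m(\epsilon_m)$. This gives \cref{eq:overestimation} and $t_m\sup_P P[D_m^\complement]\to0$. One remark on measurability: $D_m$ is an intersection of countably many measurable sets, so it lies in $\sigAlg$.

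\textbf{Almost-sure convergence.} The choice of $(\epsilon_m)$ and $(t_m)$ is identical, using $a_m'$ in place of $a_m$. The only extra care is the definition of $a_m'$. As written, $\lvert\sup_{k\geq m}\hat\theta_k-\theta^\ast\rvert$ does not quite control downward deviations. I will read the quantity as $\sup_{k\geq m}\lvert\hat\theta_k-\theta^\ast\rvert$, which is the intended uniform almost-sure convergence.

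Under that reading, on the event $\{\sup_{k\geq m}\lvert\hat\theta_k-\theta^\ast\rvert\leq\epsilon_m\}$ we have $\hat\theta_{m+b_{m,n}}+\epsilon_m\geq\theta^\ast$ for every $n$, whatever the values of $b_{m,n}\geq0$. This holds pointwise in $\omega$ even if $b$ is random, because $m+b_{m,n}(\omega)\geq m$. That event is therefore contained in $D_m$, and the bound $\sup_P P[D_m^\complement]\leq a_m'(\epsilon_m)$ follows as before.

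If one insists on the literal definition $\lvert\sup_{k\geq m}\hat\theta_k-\theta^\ast\rvert$, the needed lower bound on each $\hat\theta_k$ is not implied. In that case I would state the reinterpretation explicitly.

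The main obstacle is this diagonal selection, in particular making $k(m)$ well defined and nondecreasing; it is handled by taking the $M_k$ strictly increasing. The remaining steps are set inclusions.
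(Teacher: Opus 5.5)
Your proof is correct and follows the paper's route. The paper picks $(\epsilon_m)$ and then $(t_m)$ by applying \cref{clry:rate of uniform boundedness 2} twice, whereas your dyadic construction picks both at once; either way the argument ends by including the good event in $D_m$. Your reading of $a_m^\prime$ via $\sup_{k\geq m}\lvert\hat\theta_k-\theta^\ast\rvert$ is necessary, not cosmetic: under the literal definition the paper's inclusion $C_m\subset B_m$ fails. For example, take deterministic $\hat\theta_k$ equal to $\theta^\ast$ for even $k$ and $\theta^\ast-1$ for odd $k$, with $b\equiv0$. Then $a_m^\prime\equiv0$, yet $D_m=\emptyset$ for every large odd $m$ once $\epsilon_m<1$, so your interpretation is what makes both the paper's argument and the statement hold.
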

The characterization of distribution-uniform \gls{as} convergence we provide in \cref{eq:uniform as convergence} comes from the characterization \citep[Corollary 20.8]{bauerMeasureIntegrationTheory2001} that a finite real process $(Z_m)_{m\in\N}$ converges $P$-\gls{as} to a variable $Z$ if, and only if, \begin{equation*}
    \forall\epsilon>0,\quad \lim_{m\to\infty}P\left[\lvert\sup_{k\geq m} Z_k - Z\rvert>\epsilon\right] = 0.
\end{equation*}
The condition \cref{eq:uniform as convergence} is a distribution-uniform version of this characterization, and is extensively discussed in \cite{waudby-smithDistributionuniformAnytimevalidSequential2026}.
\par
The combination of \cref{clry:domination via monotonicity} and of \cref{lemma:overestimation via convergence} enables creating a wide variety of $r$-\glspl{aep}.
The only conditions to verify to that end are finding an appropriate sequence $\epsilon$, and the fact that the resulting overestimation process $\bar\theta$ needs to be adapted to the filtration sequence at hand.
A case of particular interest is when the processes at hand all result from an underlying data stream represented by its filtration.
We state this explicitly below.
\begin{restatable}{theorem}{burnIn}\label{thm:r-aep from burn in of single data stream}
    Let $\mathcal G = (\mathcal G_n)_{n\in\N}$ be a filtration, and $E = (E_{n}(\theta))_{(n,\theta)\in\N\times\R}$ be a nonnegative process such that $E(\theta) := (E_n(\theta))_{n\in\N}$ is adapted to $\mathcal G$ for all $\theta\in\R$.
    Assume that $E_n(\cdot)$ is nonincreasing, for all $n\in\N$, and that $(\omega,\theta)\in\Omega\times\R\mapsto E_n(\theta)(\omega)$ is $\mathcal G_n\otimes\mathcal B(\R)$-measurable.
    Let $\theta^\ast\in\R$, and assume that $E(\theta^\ast)$ is an e-process.
    Let $\hat\theta = (\hat\theta_m)_{m\in\N}$ be a finite real process adapted to $\mathcal G$.
    \begin{itemize}
    \item  
    If $\hat\theta_m\to\theta^\ast$ as $m\to\infty$ in probability uniformly in $\subsetProbaMeasures$, then the process 
        \begin{equation*}
            E^\mathrm{prob} = (E_{n}(\hat\theta_m + \epsilon_m))_{m,n\in\N}
        \end{equation*}
    is an $\infty$-\gls{aep} for $\subsetProbaMeasures$ and the filtration array $(\mathcal G_{m\pwMax n})_{m,n\in\N}$, for any sequence $(\epsilon_m)_{m\in\N}$ such that $\epsilon_m\to 0$ and $a_m(\epsilon_m)\to 0$ as $m\to\infty$, where $a_m$ is introduced in \cref{eq:uniform convergence in probability}.
    Furthermore, any nonnegative sequence $(t_m)_{m\in\N}$ such that $t_m\to\infty$ and $t_m\cdot a_m(\epsilon_m)\to 0$ as $m\to\infty$ is a truncation sequence for $E^\mathrm{prob}$.
    \item
    If $\hat\theta_m\to\theta^\ast$ as $m\to\infty$ \gls{as} uniformly in $\subsetProbaMeasures$, then the process 
    \begin{equation*}
        E^\mathrm{a.s.} = (E_{n}(\hat\theta_{m\pwMax n} + \epsilon_m))_{m,n\in\N}
    \end{equation*}
    is an $\infty$-\gls{aep} for $\subsetProbaMeasures$ and the filtration array $(\mathcal G_{m\pwMax n})_{m,n\in\N}$, for any sequence $(\epsilon_m)_{m\in\N}$ such that $\epsilon_m\to 0$ and $a_m^\prime(\epsilon_m)\to 0$ as $m\to\infty$, where $a_m^\prime$ is introduced in \cref{eq:uniform as convergence}.
    Furthermore, any nonnegative sequence $(t_m)_{m\in\N}$ such that $t_m\to\infty$ and $t_m\cdot a_m^\prime(\epsilon_m)\to 0$ as $m\to\infty$ is a truncation sequence for $E^\mathrm{a.s.}$.
    \end{itemize}
\end{restatable}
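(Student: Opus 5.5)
The plan is to reduce both bullets to \cref{clry:domination via monotonicity} with $r\equiv\infty$, the filtration array $\filtration_{m,n}:=\mathcal G_{m\pwMax n}$, and the parameter-indexed process $E_{m,n}(\theta):=E_n(\theta)$. The overestimation condition will come from \cref{lemma:overestimation via convergence}.

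First, I would check hypothesis \labelcref{item:true aep}: that $(E_n(\theta^\ast))_{m,n\in\N}$, constant in $m$, is a uniformly $\infty$-\gls{aep} for $\filtration$. Fix $\tau=(\tau_m)_m$ with each $\tau_m$ an $\filtration_{m,\bullet}$-stopping time. Since $\filtration_{m,n}=\mathcal G_{m\pwMax n}$ is larger than $\mathcal G_n$, $\tau_m$ need not be a $\mathcal G$-stopping time, so the e-process property of $E(\theta^\ast)$ does not apply directly. The fix is to set $\sigma_m:=\tau_m\pwMax m$. For $k<m$ we have $\{\sigma_m\le k\}=\emptyset$, and for $k\ge m$ we have $\{\sigma_m\le k\}=\{\tau_m\le k\}\in\mathcal G_{k}$. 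Hence $\sigma_m$ is a $\mathcal G$-stopping time.

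This alone does not bound $\E_P[E_{\tau_m}(\theta^\ast)]$, because on $\{\tau_m<m\}$ the value $E_{\tau_m}$ is not $E_{\sigma_m}$. I would therefore use the characterization of \citet[Lemma 6]{ramdasAdmissibleAnytimevalidSequential2022}: $E_n(\theta^\ast)\le L^P_n$ for a nonnegative $P$-supermartingale $L^P$ with respect to $\mathcal G$ satisfying $\E_P[L^P_0]\le1$. Since $\tau_m\le\sigma_m$ and both are $\mathcal G_{m}$-measurable on $\{\tau_m<m\}$, I would decompose
\[
\E_P[E_{\tau_m}]\le \E_P\Bigl[L^P_{\tau_m}\indicator_{\{\tau_m\ge m\}}\Bigr]+\E_P\Bigl[\max_{n\le m}L^P_n\,\indicator_{\{\tau_m<m\}}\Bigr].
\]
The second term is not obviously bounded by $1$. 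This is the main obstacle: nonanticipation fails for the first $m$ coordinates, so the $\infty$-\gls{aep} property of the unestimated process is not immediate.

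I see two ways around it. The first is that the intended indexing is really $E_{m,n}=E_{m\pwMax n}$ or $E_{m+n}$, as in the surrounding discussion; then $E_{m,\tau_m}=E_{\sigma_m}$ exactly, and optional stopping gives $\E_P[\cdot]\le1$ for every $m$. The second is to read the filtration statement so that $\tau_m$ only uses $\mathcal G_n$-information, which again gives the bound $\le 1$. In writing the proof I would adopt whichever interpretation makes $E_{m,\tau_m}(\theta^\ast)=E_{\sigma}(\theta^\ast)$ for a $\mathcal G$-stopping time $\sigma$, and flag this point explicitly.

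Next, measurability and adaptedness. In the probability case, $\bar\theta_{m,n}=\hat\theta_m+\epsilon_m$ is $\mathcal G_m\subset\mathcal G_{m\pwMax n}$-measurable. In the a.s.\ case, $\bar\theta_{m,n}=\hat\theta_{m\pwMax n}+\epsilon_m$ is $\mathcal G_{m\pwMax n}$-measurable. It is of the form $\hat\theta_{m+b_{m,n}}$ with $b_{m,n}=(n-m)^+$, as required by \cref{lemma:overestimation via convergence}. Joint measurability of $(\omega,\theta)\mapsto E_n(\theta)(\omega)$ then makes $E_n(\bar\theta_{m,n})$ $\filtration_{m,n}$-measurable. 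Monotonicity \labelcref{item:domination} is assumed.

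Finally, the overestimation condition. On $\{|\hat\theta_m-\theta^\ast|\le\epsilon_m\}$ we have $\bar\theta_{m,n}\ge\theta^\ast$ for all $n$, so $P[D_m^\complement]\le a_m(\epsilon_m)\to0$ uniformly. Similarly, on $\{\sup_{k\ge m}|\hat\theta_k-\theta^\ast|\le\epsilon_m\}$ all $\bar\theta_{m,n}\ge\theta^\ast$, giving $P[D_m^\complement]\le a'_m(\epsilon_m)$. \cref{clry:domination via monotonicity} then yields the $\infty$-\gls{aep} property. For the truncation sequence I would use that $t_m\to\infty$ is a valid truncation for the constant-in-$m$ e-process, since $\E_P[E_\sigma\pwMin t_m]\le1$. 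Combined with $t_m\,a_m(\epsilon_m)\to0$ (resp.\ $t_m\,a'_m(\epsilon_m)\to0$), this shows that $t$ is a truncation sequence for $E^{\mathrm{prob}}$ (resp.\ $E^{\mathrm{a.s.}}$).
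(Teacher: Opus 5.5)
Your reduction is exactly the paper's proof. The paper checks that $E_n(\bar\theta_{m,n})$ is adapted to $(\mathcal G_{m\pwMax n})_{m,n\in\N}$ and then declares that the rest follows immediately from \cref{lemma:overestimation via convergence} and \cref{clry:domination via monotonicity}. It never verifies hypothesis \labelcref{item:true aep} of the corollary. The obstacle you isolate there is real, and it is a defect of the statement rather than of your argument: for $n<m$, a stopping time for $(\mathcal G_{m\pwMax n})_{n\in\N}$ may use $\mathcal G_m$-information, so the theorem as written is false. Take the following instance:
\begin{itemize}
\item $\subsetProbaMeasures=\{P\}$, with $X_1,X_2,\dots$ i.i.d.\ uniform on $\{-1,1\}$ and $\mathcal G_n=\sigma(X_1,\dots,X_n)$;
\item $E_n(\theta):=\prod_{i=1}^n(1+X_i/2)$ for every $\theta$, a test martingale that is trivially nonincreasing and jointly measurable in $\theta$;
\item $\hat\theta_m:=\theta^\ast$ and any $\epsilon_m\to0$.
\end{itemize}
Then $a_m(\epsilon_m)=a'_m(\epsilon_m)=0$ and $E^{\mathrm{prob}}_{m,n}=E^{\mathrm{a.s.}}_{m,n}=E_n$. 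For $m\ge1$, $\tau_m:=\indicator_{\{X_1=1\}}$ is a finite stopping time for $(\mathcal G_{m\pwMax n})_{n\in\N}$, because $\{\tau_m\le 0\}\in\mathcal G_1\subset\mathcal G_m$. Moreover $E_{m,\tau_m}=\max(E_0,E_1)$, so $\E_P[E_{m,\tau_m}\pwMin t]=5/4$ for all $m\ge1$ and $t\ge3/2$. Hence neither process is an $\infty$-AEP, nor an $r$-AEP for any $r$ with $r_m\ge1$ eventually. The failure already occurs in hypothesis \labelcref{item:true aep} for $E(\theta^\ast)$ itself, before any estimator enters.

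What your proposal can prove is therefore a corrected statement, and your first repair is the right one. Monitor $E_{m\pwMax n}(\bar\theta_{m,n})$ instead of $E_n(\bar\theta_{m,n})$; equivalently, work on the upper-triangular range $n\ge m$, which is how the paper actually treats the example of \citet{chuggPostHocLargeSampleStatistical2026} and what \cref{rmk:index shift} alludes to. Then $E^\ast_{m,\tau_m}=E_{\tau_m\pwMax m}(\theta^\ast)$, where $\tau_m\pwMax m$ is a $\mathcal G$-stopping time as you verified. The case $\tau_m=\infty$ is harmless, since $\limsup_n E_{m\pwMax n}(\theta^\ast)=\limsup_n E_n(\theta^\ast)$. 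So $E^\ast$ is an $\infty$-SAEP and every $t_m\to\infty$ is a truncation sequence for it. The rest of your argument then goes through unchanged: adaptedness, $b_{m,n}=(n-m)^+$, the bounds on $P[D_m^\complement]$, and the truncation claim.

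Drop your second repair. Restricting to $\mathcal G$-stopping times leaves the framework of the definition, because $E_n(\hat\theta_m+\epsilon_m)$ is not $\mathcal G_n$-measurable for $n<m$.

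One smaller point concerns the a.s.\ case. You bound $P[D_m^\complement]$ through $\sup_{k\ge m}\lvert\hat\theta_k-\theta^\ast\rvert$, whereas \cref{eq:uniform as convergence} places the absolute value outside the supremum. Your version is the one actually needed: $D_m$ requires $\hat\theta_k\ge\theta^\ast-\epsilon_m$ for all $k\ge m$, and the literal quantity only controls upward deviations. The inclusion $C_m\subset B_m$ in the paper's proof of \cref{lemma:overestimation via convergence} fails for the same reason. State explicitly that you use the corrected definition.
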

We emphasize that the choice of using $\hat\theta_{m\pwMax n}$ in the definition of $E^\mathrm{a.s.}$ is arbitrary; any choice of the form $\hat\theta_{m+b_{m,n}}$ is valid, at the price of adapting the filtration to preserve adaptedness.
\par
The construction of \citet[Proposition 4.4]{chuggPostHocLargeSampleStatistical2026} can be seen as an application of \cref{thm:r-aep from burn in of single data stream}, where the sequences $\epsilon$ and $t$ are made explicit.
Specifically, introducing the notation of the reference and renaming their $\theta$ as $\mu$ and their $\sigma^2$ as $\theta$ for consistency with our notation, since it is the unknown parameter, define for $m,n\in\N$\begin{align*}
    E_{n}(\theta) &= \exp\left[\lambda S_n(\mu) - \frac{\lambda^2}2\left(\frac23n\theta + \frac13\sum_{i=1}^n(X_i - \mu)^2\right)\right],\quad\text{and}\\
    \hat\theta_m &= \frac32\hat s_m^2 - \frac12m^{-1}\sum_{i=1}^m(X_i-\mu)^2.
\end{align*}
The reference argues that, when $\theta$ is set to $\theta^\ast$ the variance of the underlying i.i.d.\ data, $E(\theta^\ast)$ is an e-process.
The estimator $\hat\theta$ above is chosen such that, for $n\geq m$, \begin{equation}\label{eq:aep chugg et al}
    E_n(\theta_{m\pwMax n} + \epsilon_m) = \exp\left[\lambda S_n(\mu) - n\left(\hat s_n^2 + \frac23\epsilon_m\right)\frac{\lambda^2}2\right],
\end{equation}
which is the process defined in their Proposition 4.4 if $\epsilon_m = \frac32\log(m)^{-1}$, before truncation and neglecting the mixture on the parameter $\lambda$.
Our \cref{thm:r-aep from burn in of single data stream} guarantees that there exists a vanishing sequence $(\epsilon_m)_{m\in\N}$ such that \cref{eq:aep chugg et al} defines an $\infty$-\gls{aep}, by relying on the results of \cite{waudby-smithDistributionuniformAnytimevalidSequential2026}, and of \cite{chuggPostHocLargeSampleStatistical2026} to show the uniform \gls{as} convergence of $\hat\theta_m$ to $\theta^\ast$ when $\subsetProbaMeasures$ satisfies the moment conditions imposed in the reference.
\citet{chuggPostHocLargeSampleStatistical2026} have the more precise result that $\epsilon_m = \frac32\log(m)^{-1}$ is a suitable such sequence, and compute a corresponding truncation sequence.
\begin{remark}\label{rmk:index shift}
    The construction of \cref{thm:r-aep from burn in of single data stream} with an \gls{as} consistent estimator results in an $\infty$-\gls{aep} that is an upper-triangular array; specifically, via the index change $k := m+n$.
    This structure has been taken as a defining feature in the previous works of \cite{waudby-smithDistributionuniformAnytimevalidSequential2026} and \cite{chuggPostHocLargeSampleStatistical2026}, but our results show that this needs not be the case.
\end{remark}
\begin{remark}
    \cref{lemma:overestimation via convergence} highlights that the \emph{mode of convergence} of the estimator $\hat\theta$ (in probability, or almost sure) results in \emph{different \glspl{aep}}.
    The \gls{aep} obtained with convergence in probability involves the estimator evaluated at the current approximation index, $\hat\theta_m$.
    In contrast, the one obtained with \gls{as} convergence allows more general choices, $\hat\theta_{m+b_{m,n}}$ for any nonnegative $b_{m,n}$.
    In particular, the estimator involved in the \gls{aep} is allowed to evolve with $n\in\N$.
    One could imagine intermediate modes of convergence that allow some, but not all dependency with $n$.
    For instance, concluding on an estimator involving $\hat\theta_{m+b_{m,n}}$ where $b = (b_{m,n})_{m,n\in\N}$ is a process that is \emph{bounded} in $n\in\N$; say, $b_{m,n}\leq h_m$ for some $h_m\in\N$, is achieved by imposing the following mode of convergence: \begin{equation*}
        \forall \epsilon>0, \lim_{m\to\infty}\sup_{P\in\subsetProbaMeasures}P\left[\left\lvert \sup_{k\in\integers{m,m+h_{m}}}\hat\theta_k - \theta^\ast\right\rvert>\epsilon\right] = 0.
    \end{equation*}
    This mode of convergence of $\hat\theta$ to $\theta^\ast$ appears to be intermediate between convergence in probability and almost sure convergence as soon as $h_m\to\infty$ as $m\to\infty$.
    We do not investigate this in more detail, but point it out as an interesting avenue for future exploration.
\end{remark}

\subsection{Calibration of anytime p-values and asymptotic p-processes}\label{subsec:anytime p-values}
The recent work of \cite{waudby-smithDistributionuniformAnytimevalidSequential2026} proposes \emph{distribution-uniform anytime p-values} as a relevant object for asymptotic \gls{savi}.
In short, they are triangular arrays of variables that satisfy the condition defining anytime-valid p-values over one index, 
asymptotically in the other index.
In this section, we show how anytime p-values can be calibrated into $\infty$-\glspl{aep} by leveraging ideas from \cite{ignatiadisAsymptoticCompoundEvalues2024}.
More generally, this motivates the introduction of \glspl{app} and their strong version, which respectively result in $r$-\glspl{aep} and $r$-\glspl{saep} after calibration; the distribution-uniform anytime-valid p-values of \cite{waudby-smithDistributionuniformAnytimevalidSequential2026} correspond to $\infty$-\glspl{app}, up to the index shift described in \cref{rmk:index shift}.

\paragraph*{Distribution-uniform anytime p-values and asymptotic p-processes\preprintdot} We begin by recalling the definitions of p-variables, p-processes, and distribution-uniform anytime p-values.
We then introduce $r$-\glspl{app} and \glspl{sapp} as generalizations that fit into our framework.
\begin{definition}[p-variable, p-process]
    A p-variable for $\subsetProbaMeasures$ is a nonnegative, finite
    random variable $p$ such that, for all $\alpha\in(0,1)$ and $P\in\subsetProbaMeasures$, $P[p\leq \alpha]\leq\alpha$. 
    A p-process for $\subsetProbaMeasures$ and a filtration $\filtration$ is a nonnegative finite process $p = (p_n)_{n\in\N}$ such that, for all $\filtration$-stopping times $\tau$, the variable $p_\tau$ is a p-variable.
\end{definition}
To introduce distribution-uniform anytime p-values, we use the notation $I = \{(m,k)\in\N^2\mid k\geq m\}\subset\N^2$ for indexing upper-triangular arrays.
\begin{definition}[$\subsetProbaMeasures$-uniform anytime p-values, \citealp{waudby-smithDistributionuniformAnytimevalidSequential2026}]\label{def:anytime p-value}
    A finite nonnegative process $p = (p_{m,k})_{m,k\in I}$ is a \emph{$\subsetProbaMeasures$-uniform anytime p-value} if \begin{equation*}
        \limsup_{m\to\infty}\sup_{P\in\subsetProbaMeasures} P[\exists k\geq m, p_{m,k}\leq \alpha]\leq\alpha.
    \end{equation*}
\end{definition}
This requirement can be connected to the asymptotic p-variables introduced in \citet[Definition 3.5]{ignatiadisAsymptoticCompoundEvalues2024}.
We emphasize that we do not rely directly on the definition of the reference, but rather on an equivalent characterization that we present below as a definition.
The proof that this is indeed an equivalent characterization can be found in the supplementary materials.
\begin{definition}\label{def:uniformly asymptotic p-variables}
    Let $p = (p_m)_{m\in\N}$ be a stochastic process taking values in $\Rnn$.
    Then, $p$ is called a \emph{uniformly asymptotic p-variable} (for $\subsetProbaMeasures$) if
    \begin{equation}\label{eq:uniformly asymptotic p-variable}
\forall\alpha\in(0,1),~\limsup_{m\to\infty}\sup_{P\in\subsetProbaMeasures}P[p_m\leq\alpha]\leq \alpha.
    \end{equation}
    It is called a \emph{uniformly strongly asymptotic p-variable} (for $\subsetProbaMeasures$) if  
    \begin{equation}\label{eq:uniformly strongly asymptotic p-variable}
        \limsup_{m\to\infty}\sup_{P\in\subsetProbaMeasures}\sup_{\alpha\in(0,1)}\alpha^{-1} P[p_m\leq\alpha]\leq 1.
    \end{equation}
\end{definition}
This definition enables the following characterization.
\begin{restatable}{theorem}{anytimePValuesAreInftyApps}\label{thm:anytime p-values are infty-APPs}
    Let $p = (p_{m,k})_{m,k\in I}$ be a nonnegative finite process adapted to a triangular filtration sequence $\filtration = (\filtration_{m,k})_{m,k\in I}$.
    Introduce the shifted filtration $\bar\filtration = (\filtration_{m,m+n})_{m,n\in\N}$
    Then, $p$ is a $\subsetProbaMeasures$-uniform anytime-valid p-value if, and only if, for any sequence of stopping times $\tau = (\tau_m)_{m\in\N}\in\rBoundedStoppingTimes_\frm(\infty,\bar\filtration,\subsetProbaMeasures)$, the process $(p_{m,m+\tau_m})_{m\in\N}$ is a uniformly asymptotic p-variable.
\end{restatable}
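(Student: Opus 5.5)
We prove the two implications separately. The "if" direction uses a hitting-time construction. The "only if" direction rests on the observation that $\{p_{m,m+\tau_m}\leq\alpha\}\subseteq\{\exists k\geq m,\ p_{m,k}\leq\alpha\}$.

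\emph{Only if.} Assume $p$ is a $\subsetProbaMeasures$-uniform anytime p-value. Fix $\tau\in\rBoundedStoppingTimes_\frm(\infty,\bar\filtration,\subsetProbaMeasures)$ and $\alpha\in(0,1)$. Since $\tau_m$ is finite, the index $k=m+\tau_m$ satisfies $k\geq m$. Hence $\{p_{m,m+\tau_m}\leq\alpha\}\subseteq\{\exists k\geq m,\ p_{m,k}\leq\alpha\}$ pointwise on $\Omega$. Taking $\sup_P$ and then $\limsup_m$ gives \cref{eq:uniformly asymptotic p-variable} for $(p_{m,m+\tau_m})_{m}$. Measurability of $p_{m,m+\tau_m}$ is standard, since $\tau_m$ is a random index with countably many values.

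\emph{If.} Fix $\alpha\in(0,1)$. For each $m$, define the first-hitting time
\begin{equation*}
\sigma_m=\inf\{n\in\N\mid p_{m,m+n}\leq\alpha\}.
\end{equation*}
This is a $\bar\filtration_{m,\bullet}$-stopping time because $p$ is adapted. It may be infinite, so it is not admissible directly. We therefore truncate: for $N\in\N$ set $\tau^N_m=\sigma_m\pwMin N$, which is finite and bounded. On $\{\sigma_m\leq N\}$ we have $p_{m,m+\tau^N_m}\leq\alpha$, so
\begin{equation*}
P[\exists k\in\{m,\dots,m+N\},\ p_{m,k}\leq\alpha]\leq P[p_{m,m+\tau^N_m}\leq\alpha].
\end{equation*}
To pass from a fixed $N$ to all $k\geq m$, we need a truncation level that may depend on $m$ and $P$.

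\emph{Main obstacle: the diagonal argument.} The difficulty is that the definition requires a single sequence $\tau=(\tau_m)_m$, not one sequence per $N$, and the limit in $N$ must be uniform over $P$. The approach is to argue by contradiction. Suppose the anytime p-value property fails at level $\alpha$. Then there exist $\delta>0$, a subsequence $m_j$, and measures $P_j\in\subsetProbaMeasures$ with
\begin{equation*}
P_j[\exists k\geq m_j,\ p_{m_j,k}\leq\alpha]>\alpha+2\delta.
\end{equation*}
By continuity from below in $N$ for the fixed measure $P_j$, we can choose $N_j$ such that
\begin{equation*}
P_j[\exists k\in\{m_j,\dots,m_j+N_j\},\ p_{m_j,k}\leq\alpha]>\alpha+\delta.
\end{equation*}
Now define $\tau_{m_j}=\sigma_{m_j}\pwMin N_j$, and set $\tau_m=0$ for all other $m$. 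This is a finite stopping-time sequence in $\rBoundedStoppingTimes_\frm(\infty,\bar\filtration,\subsetProbaMeasures)$, and along the subsequence it gives
\begin{equation*}
\sup_P P[p_{m_j,m_j+\tau_{m_j}}\leq\alpha]>\alpha+\delta.
\end{equation*}
This contradicts the assumption that $(p_{m,m+\tau_m})_m$ is a uniformly asymptotic p-variable. The key point is that $N_j$ depends on $P_j$, but each $m$ receives only one $N$, so the resulting sequence is legitimate.
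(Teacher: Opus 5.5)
Your proof is correct. Your ``only if'' direction is the same as the paper's; your ``if'' direction takes a genuinely different route.

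\textbf{The paper's route.} The paper fixes $\alpha$ and introduces the indicator process $E_{m,n}=\alpha^{-1}\indicator_{\{p_{m,m+n}\leq\alpha\}}$. It observes that the hypothesis says exactly that $E$ satisfies \cref{thm:characterizations r-SAEP}\labelcref{item:characterizations r-SAEP:ii} with $r\equiv\infty$. It then invokes that theorem, whose proof relies on the Snell-envelope-type process $L^P$ and on optional sampling at possibly infinite stopping times, to extend the bound to possibly infinite stopping times. Finally it evaluates at the untruncated hitting time $\inf\{n\mid p_{m,m+n}\leq\alpha\}$. Under the convention $E_{m,\infty}=\limsup_n E_{m,n}$, which vanishes on the event that the hitting time is infinite, this gives $\E_P[E_{m,\tau_m}]=\alpha^{-1}P[\exists k\geq m,\ p_{m,k}\leq\alpha]$.

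\textbf{Your route.} You argue by contradiction. For each offending $P_j$ you use continuity from below to choose a truncation level $N_j$. You then assemble a single sequence with $\tau_{m_j}=\sigma_{m_j}\pwMin N_j$ along the subsequence and $\tau_m=0$ elsewhere. Letting $N_j$ depend on $P_j$ is harmless: the resulting $\tau$ is one fixed sequence independent of $P$, and the supremum over $P$ is taken only afterwards.

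\textbf{Comparison.} Your argument is self-contained and elementary: it needs no essential suprema, no optional sampling at infinite times, and no convention at $\infty$. It also proves slightly more, since your $\tau_m$ are all bounded (by $N_j$, though not uniformly in $m$). So it suffices to test sequences of individually bounded stopping times. The paper's route, in exchange, makes the structural link explicit. Level by level, an anytime p-value is an $\infty$-SAEP of indicator type, and the result becomes a corollary of the general equivalence between finite and infinite stopping times in \cref{thm:characterizations r-SAEP}.
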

This characterization strongly motivates a definition mirroring \cref{def:asymptotic e-process}, but with p-variables.
In particular, we abandon the triangular structure, and allow general horizon sequences $r$.
\begin{definition}[Uniformly $r$-asymptotic p-process]
    \label{def:asymptotic p-process}
    Let $p = (p_{m,n})_{m,n\in\N}$ be a nonnegative process adapted to a filtration sequence $\filtration$ and $r$ be a sequence of extended integers.
    We say that $p$ is a \begin{enumerate}[label=(\roman*)]
        \item \emph{uniformly $r$-asymptotic p-process ($r$-\glsps{app}\glsunset{app})} (for $\subsetProbaMeasures$ and for $\filtration$) if for all $\tau=(\tau_m)_{m\in\N}\in\rBoundedStoppingTimes_\frm(r,\filtration,\subsetProbaMeasures)$, the process $(p_{m,\tau_m})_{m\in\N}$ is a uniformly asymptotic p-variable for $\subsetProbaMeasures$,
        \item \emph{uniformly strongly $r$-asymptotic p-process ($r$-\glsps{sapp}\glsunset{sapp})} (for $\subsetProbaMeasures$ and for $\filtration$) if for all $\tau=(\tau_m)_{m\in\N}\in\rBoundedStoppingTimes_\frm(r,\filtration,\subsetProbaMeasures)$, the process $(p_{m,\tau_m})_{m\in\N}$ is a uniformly strongly asymptotic p-variable for $\subsetProbaMeasures$.
    \end{enumerate}
\end{definition}
Similarly as with \glspl{aep} and \glspl{saep}, we omit the qualifier ``uniformly'' in the discussion.
It follows that \cref{thm:anytime p-values are infty-APPs} identifies distribution-uniform anytime p-values with $\infty$-\glspl{app}, up to an index shift.
We do not provide a systematic study of $r$-\glspl{app} and $r$-\glspl{sapp}, but we expect that they can be characterized equivalently with finite or possibly infinite stopping times, similarly as $r$-\glspl{aep} and $r$-\glspl{saep}, by an application of \cref{thm:characterizations r-SAEP} similarly as in the proof of \cref{thm:anytime p-values are infty-APPs}.

\paragraph*{Calibration of asymptotic p-processes\preprintdot}
We now address the construction of $r$-\glspl{aep} and $r$-\glspl{saep} from $r$-\glspl{app} and $r$-\glspl{sapp} via calibration.
We begin by recalling the definition of a p-to-e calibrator.
\begin{definition}[p-to-e calibrator]
    A \emph{p-to-e calibrator} is a nonincreasing function $f:[0,\infty)\to[0,\infty]$ satisfying $f=0$ on $(1,\infty)$ and such that $f(p)$ is an e-variable for $\subsetProbaMeasures$ for any p-variable $p$.
\end{definition}
There exist many p-to-e calibrators, and they are well-studied; we refer to \citet[Section 2.3]{ramdasHypothesisTestingEvalues2025} for an extensive discussion.
We now state the main result on calibration.
\begin{restatable}[Calibration of asymptotic p-processes]{theorem}{calibration}\label{thm:calibration}
    Let $p = (p_{m,n})_{m,n\in \N}$ be a finite nonnegative process adapted to a filtration sequence $\filtration$, and $r$ be an extended integer sequence.
    Let $f:[0,\infty)\to[0,\infty]$ be a p-to-e calibrator, and define $E_{m,n} = f(p_{m,n})$, for all $m,n\in\N$.
    If $p$ is a uniformly $r$-\gls{app} for $\subsetProbaMeasures$ and $\filtration$, then $E$ is a uniformly $r$-\gls{aep} for $\subsetProbaMeasures$ and $\filtration$.
    If, additionally, $f$ is bounded or $p$ is a uniformly $r$-\gls{sapp} for $\subsetProbaMeasures$ and $\filtration$, then $E$ is a uniformly $r$-\gls{saep} for $\subsetProbaMeasures$ and $\filtration$.
\end{restatable}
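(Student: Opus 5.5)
We reduce the claim to a statement about a single sequence of variables and then apply the scalar calibration argument of \citet{ignatiadisAsymptoticCompoundEvalues2024} uniformly in $P$.

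\textbf{Reduction to finite stopping times.} By \cref{clry:characterizations r-AEP} for the $r$-\gls{aep} statement, and \cref{thm:characterizations r-SAEP} for the $r$-\gls{saep} statement, it suffices to consider $\tau=(\tau_m)_{m\in\N}\in\rBoundedStoppingTimes_\frm(r,\filtration,\subsetProbaMeasures)$. Finite stopping times are exactly the class over which the asymptotic p-process properties are defined. For such $\tau$, set $q_m := p_{m,\tau_m}$. Then $q$ is a uniformly asymptotic p-variable (resp.\ strongly asymptotic), and $E_{m,\tau_m} = f(q_m)$. The theorem therefore reduces to a purely scalar statement: calibrating a uniformly (strongly) asymptotic p-variable yields a uniformly (strongly) asymptotic e-variable.

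\textbf{Weak case.} Fix $t\in\Rnn$. Since $f$ is nonincreasing and vanishes on $(1,\infty)$, $f\pwMin t$ is nonincreasing and bounded. Use the layer-cake formula:
\begin{equation*}
\E_P[f(q_m)\pwMin t]=\int_0^t P[f(q_m)>s]\,\dd s .
\end{equation*}
Monotonicity gives $\{f(q_m)>s\}\subset\{q_m\le \alpha_s\}$, where $\alpha_s:=\sup\{x: f(x)>s\}\in[0,1]$. Away from a few boundary values, the asymptotic p-property gives $\limsup_m\sup_P P[q_m\le\alpha_s]\le\alpha_s$. To exchange $\limsup$ and $\sup_P$ with the integral, discretize: choose a finite grid $0=s_0<\dots<s_K=t$ and bound the integral above by a Riemann sum of the probabilities $P[q_m\le\alpha_{s_k}]$. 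Apply the asymptotic bound at each of the finitely many levels, then let the mesh go to zero. The limit is at most $\int_0^t U(s)\,\dd s$, where $U(s)=P_U[f(U)>s]$ is the tail of $f(U)$ for $U$ uniform on $[0,1]$. This quantity is $\le\E[f(U)]\le1$, because a uniform variable is a p-variable (and if $\subsetProbaMeasures$ admits none, I would check the standard calibrator characterization $\int_0^1 f\le 1$ directly from the definition).

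\textbf{Main obstacle.} The delicate point is the discontinuities of $f$ and the boundary cases $\alpha\in\{0,1\}$. At jumps, the inclusion $\{f(q_m)>s\}\subset\{q_m\le\alpha_s\}$ versus the strict inequality $\{q_m<\alpha_s\}$ matters. I would handle this by using right-shifted levels $\alpha_s+\delta$ and letting $\delta\downarrow0$. Since the grid is finite and the bound $\alpha+\delta$ is continuous in $\delta$, this costs only $O(\delta\,t)$.

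\textbf{Strong case.} If $f$ is bounded by some $B$, take $t=B$ in the weak case, which directly gives $\limsup_m\sup_P\E_P[f(q_m)]\le1$. If instead $q$ is strongly asymptotic, write $P[q_m\le\alpha]\le c_m\alpha$ for all $\alpha\in(0,1)$, with $c_m:=\sup_P\sup_\alpha\alpha^{-1}P[q_m\le\alpha]$ and $\limsup c_m\le1$. Applying the layer-cake formula over all of $s\in(0,\infty)$ gives
\begin{equation*}
\E_P[f(q_m)]\le \int_0^\infty \min\{1,c_m\alpha_s\}\,\dd s\le c_m\int_0^\infty \alpha_s\,\dd s=c_m\int_0^1 f,
\end{equation*}
with $\int_0^1 f\le1$ as above. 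Taking $\limsup$ concludes.
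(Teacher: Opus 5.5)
Your proof is sound up to one boundary slip, corrected below. It has the same skeleton as the paper's proof:
\begin{itemize}
\item reduce to finite stopping times via \cref{clry:characterizations r-AEP} and \cref{thm:characterizations r-SAEP}, where the $r$-APP (resp.\ $r$-SAPP) property says exactly that $q_m=p_{m,\tau_m}$ is a uniformly (resp.\ strongly) asymptotic p-variable;
\item handle bounded $f$ by truncating at a level $t\geq\sup f$.
\end{itemize}
The difference lies in the scalar step. The paper does not prove it; it cites \citet[Proposition 3.12]{ignatiadisAsymptoticCompoundEvalues2024} for the fact that calibrating a uniformly (strongly) asymptotic p-variable yields a uniformly (strongly) asymptotic e-variable. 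You prove it from scratch: the layer-cake formula, a finite grid of levels so that $\limsup_m\sup_P$ passes through a finite sum, and a Riemann-sum limit for the nonincreasing map $s\mapsto\alpha_s$. Your route is self-contained, gives an explicit bound in the strong case, and isolates $\int_0^1 f\leq 1$ as the only property of $f$ used. The citation buys brevity.

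There are two technical points to fix.

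First, in the strong case the inequality $\E_P[f(q_m)]\leq\int_0^\infty\min\{1,c_m\alpha_s\}\,\dd s$ fails at levels where $\{f>s\}=[0,1]$ whenever $c_m<1$. For $q_m\equiv 1$ and $f=\indicator_{[0,1]}$ one has $c_m=0$ but $\E_P[f(q_m)]=1$. At those levels use $P[q_m\leq 1]\leq 1=\alpha_s$, i.e.\ replace $c_m$ by $\max\{c_m,1\}$; the $\limsup$ conclusion is unchanged. Conversely, in the weak case your $\delta$-shift is unnecessary: $\limsup_m\sup_P P[q_m\leq\alpha]\leq\alpha$ holds for every $\alpha\in[0,1]$, trivially at $1$ and by monotonicity at $0$.

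Second, your fallback of deriving $\int_0^1 f\leq 1$ ``directly from the definition'' when the model admits no uniform p-variable cannot work. Take a one-point $\Omega$, with $\subsetProbaMeasures$ the Dirac mass.
\begin{itemize}
\item The p-variables for $\subsetProbaMeasures$ are the constants $c\geq 1$, so $f=2\cdot\indicator_{[0,1)}+\indicator_{\{1\}}$ is a calibrator in the paper's $\subsetProbaMeasures$-relative sense.
\item $p_{m,n}=1-1/(m+2)$ is an $r$-SAPP, since $\sup_{\alpha}\alpha^{-1}P[p_{m,n}\leq\alpha]=(m+2)/(m+1)\to 1$.
\item Yet $E_{m,n}=2$ for all $m,n$, so $E$ is not even an $r$-AEP.
\end{itemize}
The statement therefore needs the standard, model-free notion of calibrator, equivalently $\int_0^1 f\leq 1$. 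The paper relies on this implicitly through its citation, since the scalar result is false otherwise. You should assume it rather than try to derive it.
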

\section{Numerical Simulation}
\label{sec:numerical-experiments}

\begin{figure}
    \centering
    \includegraphics[width=0.99\linewidth]{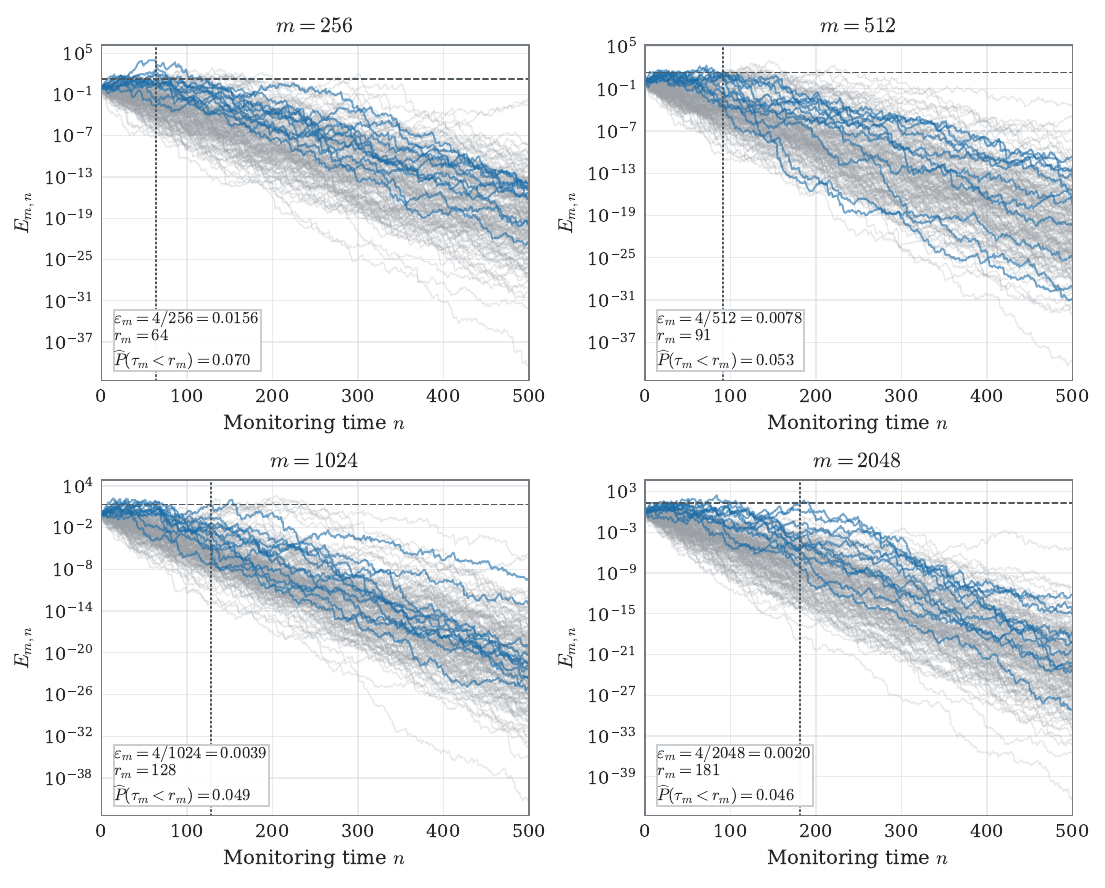}
    \caption{Trajectories of the
    simulated process described in \cref{sec:numerical-experiments}
    for a selection of different approximation indices $m \in \N$.
    The horizontal dashed line indicates the fixed threshold level
    $1/\alpha = 20$, the vertical dashed line
    indicates the time horizon sequence defined by 
    $r_m = 4 \lfloor m^{1/2} \rfloor$.
    The blue trajectories indicate a crossing of the threshold
    at a time $n \leq r_m$. 
    The estimated  probability $\widehat P [\tau_m  < r_m]$ is reported.
    }
    \label{fig:asymptotic-ville}
\end{figure}

\begin{figure}
    \centering
    \includegraphics[width=0.75\linewidth]{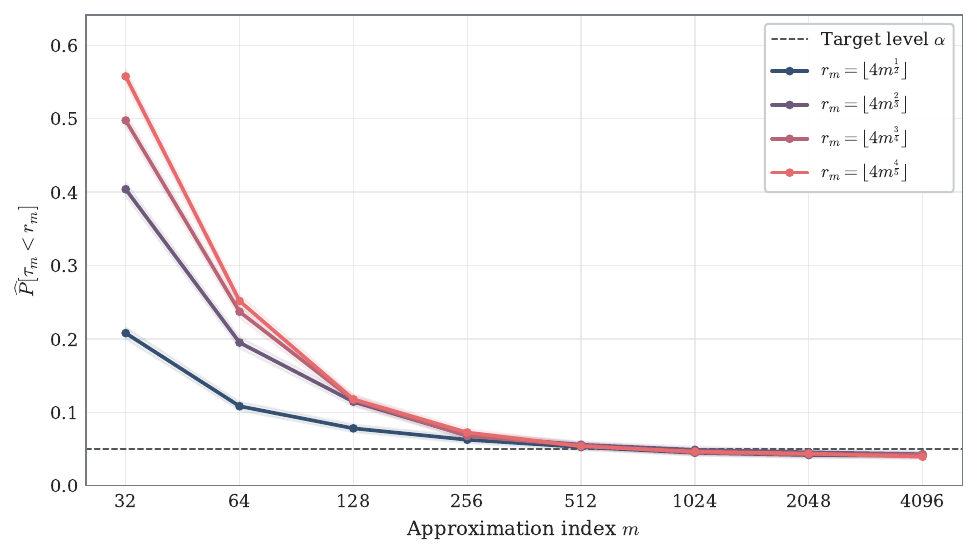}
    \caption{Empirical illustration of
    the asymptotic Ville's inequality given in 
    \cref{thm:asymptotic-ville}.
    The excursion probabilities $\widehat P [\tau_m  < r_m]$
    estimated from simulated trajectories are asymptotically
    bounded by $\alpha = 0.05$ for increasing $m$ and
    different choices of $r_m$ for which the process
    is an $r$-asymptotic e-process.}
    \label{fig:asymptotic-ville-convergence}
\end{figure}

We now simulate an $r$-\gls{saep} constructed from the
cumulative product as outlined in \cref{subsec:cumulative-product}.
In particular, we set $E_{m,0} = 1$ for all $m \in \N$
and
\begin{equation*}
    \forall n\in\Np,\quad E_{m,n} = \prod_{i = 1}^n e_{m,i}, 
\end{equation*}
where we define an array of 
asymptotic e-variables $(e_{m,i})_{(m,i) \in \N\times\Np}$ given by
\begin{equation*}
    \forall (m,i) \in \N\times\Np,\quad e_{m,i} := U_i + G_{m,i},
\end{equation*}
with independent random variables 
$
    U_i \sim \mathrm{Unif}\!\left[\tfrac12,\tfrac32\right],
$
and 
$
G_{m,i} \sim \mathcal{N}_{\geq b}(\varepsilon_m, \sigma^2),
$
that is, $G_{m,i}$ follow a normal distribution truncated from below
with mean $\varepsilon_m$, variance $\sigma^2$, and truncation threshold $b$.
The mean and variance parameters are those of the truncated distribution; see e.g. \citet{robert1995simulation} and \citet{barr1999truncated} for details regarding simulation and how the parameters of the original and truncated normal distributions are related.
We choose $\varepsilon_m = 4/m$, $\sigma^2 = 0.35^2$, and $b=-\frac12+10^{-6}$.
These choices guarantee that $e_{m,i} = U_i + G_{m,i} \geq 10^{-6}>0$ and $\E[e_{m,i}] = 1 + \varepsilon_m > 1$, making $e_{\bullet,i}$ a uniformly strongly asymptotic e-variable (since $\varepsilon_m\to 0$).
We now choose
$$
    r_m := \left\lfloor 4m^p \right\rfloor
$$
for some freely chosen $0<p<1$.
This choice matches the sufficient condition from
\cref{thm:cumulative product}. Indeed, for the filtration generated by the
process, independence gives
\begin{equation*}
    \E[e_{m,n+1}\mid\filtration_{m,n}]
    =
    \E[U_{n+1}] + \E[G_{m,n+1}]
    =
    1 + \varepsilon_m,
\end{equation*}
Moreover, $\E[e_{m,n}] = 1 + \varepsilon_m \to 1$,
so $(e_{m,n})_{m\in\N}$ is an asymptotic e-variable for each fixed $n$.
Finally, for the choices of $r_m$ above,
we have
\begin{equation*}
    r_m \cdot \varepsilon_m
    \to 0,
\end{equation*}
and therefore \cref{thm:cumulative product} implies that
$(E_{m,n})_{m,n\in\N}$ is an $r$-asymptotic e-process for this choice of
time horizon.

For the numerical simulations, 
we run $10,000$ independent trajectories each for 
approximation level
$m \in \{32,64,128,256,512,2048,4096\}$ for a 
total simulation
time of $n_{\text{end}} = 500$.
We fix the confidence level $\alpha = 0.05$ and
obtain empirical
estimates of the excursion probabilities
$\widehat P[\tau_m < r_m]$ with
$\tau_m = \inf\{ t \in \N \mid E_{m, t} \geq 1/\alpha\}$ by Monte Carlo integration over all simulated trajectories
for each scenario. 
We visualize a uniformly subsampled set of trajectories
for different choices of $m$ in \cref{fig:asymptotic-ville}.
In \cref{fig:asymptotic-ville-convergence}, we
empirically validate the behavior of
\cref{thm:asymptotic-ville} in the sense
that for increasing $m$, we confirm that
$\widehat P[\tau_m < r_m]$ is asymptotically
bounded by $\alpha$ for different choices of $p$.

\section{Conclusion}

We investigate two notions of asymptotic e-processes 
with respect to a fixed set of probability measures $\subsetProbaMeasures$
that naturally allow for constructions based on asymptotic e-variables. 
Contrary to existing notions, they rely on a horizon sequence $r$ characterizing the time-horizon of validity.
This sequence enables a genuinely asymptotic notion, where anytime-validity is obtained asymptotically as $r\to\infty$ but is not imposed at any finite approximation index.
We focus on properties and characterizations of asymptotic e-processes
that relate them to their non-asymptotic counterparts, 
as well as an explicit criterion based on asymptotic supermartingales.
We also provide several general methods for their construction by leveraging their abstract formulation, building on and generalizing existing ones.
In the context of hypothesis testing, our
version of Ville's inequality for asymptotic e-processes allows for
an asymptotic type-I error control when the set $\subsetProbaMeasures$
specifies the null hypothesis.
\par
Our work naturally opens up a range of related questions.
Most importantly, in order to obtain guarantees for sequential
testing with asymptotic e-processes,
their behavior under alternative hypotheses
and their corresponding type II errors need to be investigated.
Furthermore, from an application perspective, asymptotic 
\gls{savi} is still a recent framework and remains to be explored empirically.

\ifpreprint\section*{Acknowledgments}\else\begin{acks}[Acknowledgments]\fi
The authors thank Ben Chugg for pointing out
the recent preprint \citet{chuggPostHocLargeSampleStatistical2026}
which provided important context for the updated version of
this manuscript.
\ifpreprint\else\end{acks}\begin{funding}\fi
This work was partially funded by the German Federal Ministry of Research, Technology and Space as part of
the TRAICELL project (grant no. 03XP0636C).
\ifpreprint\else\end{funding}\fi

\ifpreprint\else
\begin{supplement}
    \stitle{Proofs}
    \sdescription{This section collects the proofs to the results presented in the main text.}
\end{supplement}
\begin{supplement}
    \stitle{Auxiliary results}
    \sdescription{This section contains auxiliary results used in the proofs}
\end{supplement}
\begin{supplement}
    \stitle{Characterization of uniformly asymptotic p-variables}
    \sdescription{This section shows that \cref{def:uniformly asymptotic p-variables} is indeed an equivalent characterization of uniformly (strongly) asymptotic p-variables.}
\end{supplement}
\fi

\ifpreprint
    \bibliographystyle{plainnat}
\else
    \bibliographystyle{imsart-nameyear}
\fi
\bibliography{references}

\clearpage
\renewcommand{\thesection}{S\arabic{section}}
\setcounter{section}{0}
\renewcommand{\theHsection}{S\arabic{section}} 
\crefalias{section}{appendix} % so that appendices are referred to as "Appendix A/B/..." instead of "Section A/B/..."
\section{Proofs}
This section collects the proofs to the results presented in the main text.
For the reader's convenience, we state again the formal result to be proven before stating each proof.
\ifpreprint\subsection{Proofs for Section 3}\fi
\characterizationUniformStoppingTime*
\begin{proof}[Proof of \cref{prop:characterization uniform in stopping time}]
    We begin with the characterization of $r$-\glspl{saep}.
    The converse implication is clear, and the direct implication follows from the fact that for all $m\in\N$, there exists $\tau_m\in\rBoundedStoppingTimes(r_m,\filtration_{m,\bullet},\subsetProbaMeasures)$ such that \begin{equation*}
        \sup_{P\in\subsetProbaMeasures}\E_P[E_{m,\tau_m}] + \frac{1}{m+1}\geq \sup_{\tau\in\rBoundedStoppingTimes(r_m,\filtration_{m,\bullet},\subsetProbaMeasures)}\sup_{P\in\subsetProbaMeasures}\E_P[E_{m,\tau}].
    \end{equation*}
    Taking the limit superior over $m$ concludes the proof of the direct implication.
    \par
    We move on to the characterization of $r$-\glspl{aep}.
    By definition, $E$ is an $r$-\gls{aep} if, and only if, $(E_{m,n}\pwMin t)_{m,n\in\N}$ is an $r$-\gls{saep} for all $t\in\Rnn$.
    Introduce then \begin{equation*}
        f_m:t\in\Rnn\mapsto \sup_{\tau\in\rBoundedStoppingTimes(r_m,\filtration_{m,\bullet},\subsetProbaMeasures)}\sup_{P\in\subsetProbaMeasures}\E_P[E_{m,\tau}\pwMin t].
    \end{equation*}
    Clearly, $f_m$ is nondecreasing for all $m\in\N$, and $E$ is an $r$-\gls{aep} if, and only if, $\limsup_{m\to\infty} f_m(t)\leq 1$ for all $t\in\Rnn$, by the result on $r$-\glspl{saep} that precedes.
    It follows from \cref{clry:rate of uniform boundedness 2} that this is equivalent to the existence of a sequence $(t_m)_{m\in\N}\subset\Rnn$ with $t_m\to\infty$ as $m\to\infty$ and such that $\limsup_{m\to\infty}f_m(t_m)\leq 1$, concluding the proof.
\end{proof}

\horizon*
\begin{proof}[Proof of \cref{prop:horizon of uniformly asymptotic e-processes}]
    We focus on the case of $r$-\glspl{aep}, as the case of $r$-\glspl{saep} follows similarly by \cref{prop:characterization uniform in stopping time}.
    The implication \labelcref{item:horizon of uniformly asymptotic e-processes:diverging horizon} $\Rightarrow$
    \labelcref{item:horizon of uniformly asymptotic e-processes:bounded horizon}
    follows immediately from \cref{prop:characterization uniform in stopping time}.
    Indeed, if $s = (s_m)_{m\in\N}$ is a bounded integer sequence, then there exists $m_0\in\N$ such that $r_m\geq s_m$ for all $m\geq m_0$, in which case it follows that \begin{equation*}
        \sup_{\tau\in\rBoundedStoppingTimes(s_m,\filtration_{m,\bullet},\subsetProbaMeasures)}\sup_{P\in\subsetProbaMeasures}\E_P[E_{m,\tau}\pwMin t_m] \leq \sup_{\tau\in\rBoundedStoppingTimes(r_m,\filtration_{m,\bullet},\subsetProbaMeasures)}\sup_{P\in\subsetProbaMeasures}\E_P[E_{m,\tau}\pwMin t_m],
    \end{equation*}
    where the sequence $t$ is given in \cref{prop:characterization uniform in stopping time}.
    The result follows by taking the limit superior over $m\geq m_0$.
    We now focus on the 
    implication \labelcref{item:horizon of uniformly asymptotic e-processes:bounded horizon} $\Rightarrow$
    \labelcref{item:horizon of uniformly asymptotic e-processes:diverging horizon}.
    For all $m,n\in\N$, and $t\in\Rnn$ define \begin{equation*}
        x_{m,n}(t) = \sup_{\tau\in\rBoundedStoppingTimes(n,\filtration_{m,\bullet},\subsetProbaMeasures)}\sup_{P\in\subsetProbaMeasures}\E_P[E_{m,\tau}\pwMin t].
    \end{equation*}
    It follows from \labelcref{item:horizon of uniformly asymptotic e-processes:bounded horizon} and the converse implication of \cref{prop:characterization uniform in stopping time} for the $r$-\gls{saep} $(E_{m,n}\pwMin t)_{m,n\in\N}$ that $\limsup_{m\to\infty} x_{m,n}(t)\leq 1$ for all fixed $n\in\N$ and $t\in\R$.
    This holds in particular with $t=n$.
    Consequently, \cref{lemma:rate of uniform boundedness} guarantees the existence of an integer sequence $(r_m)_{m\in\N}$ such that $r_m\to\infty$ and \begin{equation*}
        \limsup_{m\to\infty} x_{m,r_m}(r_m) = \limsup_{m\to\infty} \sup_{\tau_m\in\rBoundedStoppingTimes(r_m,\filtration_{m},\subsetProbaMeasures)} \sup_{P\in\subsetProbaMeasures}\E_P[E_{m,\tau_m}\pwMin r_m]\leq 1.
    \end{equation*}
    This shows that \cref{eq:characterization r-AEP} holds with $t_m := r_m$, which indeed goes to $\infty$ as $m\to\infty$, and concludes the proof by the converse implication of \cref{prop:characterization uniform in stopping time}.
\end{proof}

\characterizationSAEP*
The proof of \cref{thm:characterizations r-SAEP} relies on the following lemma to guarantee the integrability required in \labelcref{item:characterizations r-SAEP:npsms integrability}, whose proof repeats that of \cref{prop:characterization uniform in stopping time} by replacing formally $\rBoundedStoppingTimes$ with $\rBoundedStoppingTimes_\frm$.
\begin{lemma}\label{lemma:integrability of asymptotic e-process}
    Let $\filtration$ be a filtration sequence, $E = (E_{m,n})_{m,n\in\N}$ be a nonnegative process adapted to $\filtration$, and $r$ be an extended integer sequence.
    Then, \cref{thm:characterizations r-SAEP}\labelcref{item:characterizations r-SAEP:ii} holds if, and only if, \begin{equation*}
        \limsup_{m\to\infty}\sup_{\tau\in\rBoundedStoppingTimes_\frm(r_m,\filtration_{m,\bullet},\subsetProbaMeasures)}\sup_{P\in\subsetProbaMeasures}\E_P[E_{m,\tau}]\leq 1.
    \end{equation*}
    If one of these conditions holds (and therefore both hold), then there exists $m_0\in\N$ such that \begin{equation*}
        \sup_{\tau\in\rBoundedStoppingTimes_\frm(r_m,\filtration_{m,\bullet},\subsetProbaMeasures)}\sup_{P\in\subsetProbaMeasures}\E_P[E_{m,\tau}]<\infty, 
    \end{equation*}
    for all $m\geq m_0$.
    In particular, $E_{m,\tau}$ is $P$-integrable for all $\tau\in\rBoundedStoppingTimes_\frm(r_m,\filtration_{m,\bullet},\subsetProbaMeasures)$ and $P\in\subsetProbaMeasures$.
\end{lemma}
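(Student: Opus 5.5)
The equivalence mirrors the proof of \cref{prop:characterization uniform in stopping time}, with $\rBoundedStoppingTimes$ replaced by $\rBoundedStoppingTimes_\frm$ throughout. Set
\begin{equation*}
    s_m := \sup_{\tau\in\rBoundedStoppingTimes_\frm(r_m,\filtration_{m,\bullet},\subsetProbaMeasures)}\sup_{P\in\subsetProbaMeasures}\E_P[E_{m,\tau}]\in[0,\infty],
\end{equation*}
where expectations of nonnegative variables are understood in the extended sense.

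The reverse implication is immediate. Take any $\tau=(\tau_m)_{m\in\N}\in\rBoundedStoppingTimes_\frm(r,\filtration,\subsetProbaMeasures)$. Then $\sup_P\E_P[E_{m,\tau_m}]\leq s_m$ for every $m$, so $\limsup_m\sup_P\E_P[E_{m,\tau_m}]\leq\limsup_m s_m\leq 1$, which is \cref{thm:characterizations r-SAEP}\labelcref{item:characterizations r-SAEP:ii}.

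For the direct implication, I would argue by near-maximizers. For each $m$, pick $\tau_m\in\rBoundedStoppingTimes_\frm(r_m,\filtration_{m,\bullet},\subsetProbaMeasures)$ with
\begin{equation*}
    \sup_P\E_P[E_{m,\tau_m}]\geq s_m-\tfrac1{m+1}\quad\text{if } s_m<\infty,\qquad \sup_P\E_P[E_{m,\tau_m}]\geq m\quad\text{if } s_m=\infty.
\end{equation*}
This set of stopping times is nonempty, since it contains $\tau\equiv 0$, so the choice is possible. The resulting sequence $(\tau_m)_{m\in\N}$ belongs to $\rBoundedStoppingTimes_\frm(r,\filtration,\subsetProbaMeasures)$. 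Hypothesis \labelcref{item:characterizations r-SAEP:ii} then gives $\limsup_m\sup_P\E_P[E_{m,\tau_m}]\leq 1$.

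This rules out $s_m=\infty$ for infinitely many $m$: along that subsequence the chosen expectations would be at least $m\to\infty$, contradicting the bound just obtained. Hence $s_m<\infty$ for all large $m$, and for those $m$ we have $s_m\leq\sup_P\E_P[E_{m,\tau_m}]+\frac1{m+1}$. Taking $\limsup_m$ yields $\limsup_m s_m\leq 1$. The only delicate point in the whole argument is this case $s_m=\infty$, which the $1/(m+1)$-slack argument of the earlier proposition does not cover directly; the large-value selection above handles it.

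For the additional claims, $\limsup_m s_m\leq 1<\infty$ implies there is $m_0$ with $s_m<\infty$ (indeed $s_m\leq 2$) for all $m\geq m_0$. For such $m$, every $\tau\in\rBoundedStoppingTimes_\frm(r_m,\filtration_{m,\bullet},\subsetProbaMeasures)$ and every $P\in\subsetProbaMeasures$ satisfy $\E_P[E_{m,\tau}]\leq s_m<\infty$. Since $E_{m,\tau}$ is nonnegative, this gives $P$-integrability.
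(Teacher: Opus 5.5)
Your proof is correct and is essentially the paper's argument: the paper says the proof repeats that of \cref{prop:characterization uniform in stopping time} (near-maximizers with $1/(m+1)$ slack) with $\rBoundedStoppingTimes$ replaced by $\rBoundedStoppingTimes_\frm$, and then finiteness of the supremum for large $m$ is read off from $\limsup_m s_m\leq 1$. Your separate handling of the case $s_m=\infty$, where you choose $\tau_m$ with value at least $m$, fills in a detail the paper's slack argument glosses over, since that argument tacitly assumes the supremum over stopping times is either finite or attained.
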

\begin{proof}[Proof of \cref{thm:characterizations r-SAEP}]
    The implication \labelcref{item:characterizations r-SAEP:i}$\implies$\labelcref{item:characterizations r-SAEP:ii} is trivial.
    We begin with \labelcref{item:characterizations r-SAEP:iii}$\implies$\labelcref{item:characterizations r-SAEP:i}.
    Let $\tau\in\rBoundedStoppingTimes(r,\filtration,\subsetProbaMeasures)$.
    For $P\in\subsetProbaMeasures$, introduce $L$
    as given by \labelcref{item:characterizations r-SAEP:iii}, and let $m_0\in\N$ be as in \labelcref{item:characterizations r-SAEP:npsms integrability}.
    An immediate consequence is that \begin{equation*}
        \forall m\geq m_0,~\E_{P}[E_{m,\tau_m}]\leq\E_P[L_{m,\tau_m}^P].
    \end{equation*}
    Now, it also holds that \begin{equation*}
        \forall m\geq m_0,~\E_P[L_{m,\tau_m}^P] \leq \E_P[L_{m,0}^P].
    \end{equation*}
    To see this, define the stopped process $U_{m,n}^P = L_{m,n\pwMin r_m}^P$ for all $m,n\in\N$ (note that $U^P$ is effectively stopped only if $r_m<\infty$; otherwise it coincides with $L^P$).
    Then, $U^P$ is clearly nonnegative, and $U_{m,\bullet}^P$ is a supermartingale for all $m\geq m_0$.
    Indeed, for all $n\in\N$ with $n\leq r_m-1$, $\E_P[U_{m,n+1}^P\mid\filtration_{m,n}]\leq U_{m,n}^P$ by \labelcref{item:characterizations r-SAEP:npsms supermartingale}.
    For $n\geq r_m$ (which is only possible when $r_m<\infty$), we have instead $\E_P[U_{m,n+1}^P\mid\filtration_{m,n}] = L_{m,r_m}^P = U_{m,n}^P$.
    Summarizing, $U^P_{m,\bullet}$ is a nonnegative supermartingale.
    We can thus apply \cref{thm:optional sampling nonnegative supermartingales} to $U_{m,\bullet}^P$ and obtain that \begin{equation*}
        \forall m\geq m_0,~\E_P[L_{m,\tau_m}^P] = \E_P[U_{m,\tau_m}^P] \leq \E_{P}[U_{m,0}^P] = \E_P[L_{m,0}^P],
    \end{equation*}
    where the first equality leverages the fact that $\tau_m\leq r_m$, $P$-\gls{as}, when $r_m$ is finite and $U_{m,n}^P = L_{m,n}^P$ for all $n\in\N$ when $r_m=\infty$, and thus $U_{m,\infty}^P = L_{m,\infty}^P$ as well.
    Putting it all together, we have shown that \begin{equation*}
        \E_{P}[E_{m,\tau_m}] \leq \E_P[L_{m,0}^P],
    \end{equation*}
    for all $m\geq m_0$ and $\tau=(\tau_m)_{m\in\N}\in\rBoundedStoppingTimes(r,\filtration,\subsetProbaMeasures)$.
    Taking the supremum over $P\in\subsetProbaMeasures$ and the limit superior in $m$ shows that $E$ is an $r$-asymptotic e-process by calibration of $L$ in \labelcref{item:characterizations r-SAEP:npsms calibration}.
    \par
    We now show \labelcref{item:characterizations r-SAEP:ii}$\implies$\labelcref{item:characterizations r-SAEP:iii}. Let $P\in\subsetProbaMeasures$ be fixed and $m_0\in\N$ be as given in \cref{lemma:integrability of asymptotic e-process}.
    For all $m\in\N$ and $n\in\N$, define \begin{equation*}
        \mathcal S_{m,n} = \{\tau\in\rBoundedStoppingTimes_\frm(r_m,\filtration_{m,\bullet},\subsetProbaMeasures)\mid n\leq\tau\}.
    \end{equation*}
    Here, the inequality $n\leq \tau$ is to be understood as holding pointwise on $\Omega$ for the stopping time $\tau$.
    It is immediate to check that $\mathcal S_{m,\bullet}$ is nonincreasing, and that $\mathcal S_{m,n}\neq\emptyset$ if, and only if, $n\leq r_m$.
    In addition, we introduce \begin{equation*}
        L_{m,n}^P = \begin{cases}
            \ess\sup_{\tau\in\mathcal S_{m,n}}\E_P[E_{m,\tau}\mid\filtration_{m,n}],&\text{if }m\geq m_0\text{ and }n\leq r_m,\\
            E_{m,n},&\text{otherwise},
        \end{cases}
    \end{equation*}
    where we emphasize that the conditional expectations are well-defined under the condition that $m\geq m_0$ and $n\leq r_m$ since then $E_{m,\tau}$ is $P$-integrable for all $\tau\in\mathcal S_{m,n}\subset\rBoundedStoppingTimes_\frm(r_m,\filtration_{m,\bullet},\subsetProbaMeasures)$.
    Here, $\ess\sup_{\lambda\in\Lambda}Y_\lambda$ denotes the essential supremum of a collection of real random variables $(Y_\lambda)_{\lambda\in\Lambda}$, where $\Lambda$ is an arbitrary index set.
    It is always a well-defined random variable, and is unique \gls{as}; we refer to \citet[Section A.2]{ramdasAdmissibleAnytimevalidSequential2022} and the references therein for more details.
    For all $m\in\N$, the process $L^P_{m,\bullet}$ is related to the
    so-called \emph{Snell envelope} 
    of $E_{m,\bullet}$; the Snell envelope is defined similarly without imposing that the stopping times in the essential supremum are less than $r_m$ \citep{ramdasAdmissibleAnytimevalidSequential2022}.
    We see that $L^P\geq E$, $P$-\gls{as}, since the constant stopping time equal to $n$ is always in $\mathcal S_{m,n}$ when the set is nonempty.
    Furthermore, $L^P$ is adapted to $\filtration$ by construction, since $E$ is adapted to $\filtration$.
    \par
    We now show \labelcref{item:characterizations r-SAEP:npsms integrability,item:characterizations r-SAEP:npsms supermartingale,item:characterizations r-SAEP:npsms calibration}, and let $m\geq m_0$ and $n\leq r_m$.
    To that end, we reason similarly as in the proof of \citet[Lemma 6]{ramdasAdmissibleAnytimevalidSequential2022}, and rely on Proposition 45 in the reference.
    It guarantees that one can find a sequence $(\tau_{k,n})_{k\in\N}\subset\mathcal S_{m,n}$ such that $(\E_P[E_{m,\tau_{k,n}}\mid\filtration_{m,n}])_{k\in\N}$ is nondecreasing and 
    \begin{equation*}
        \lim_{k\to\infty} \E_P[E_{m,\tau_{k,n}}\mid\filtration_{m,n}] = L^P_{m,n}, \quad P\text{-\gls{as}},
    \end{equation*}
    provided that the set $\{\E_P[E_{m,\tau}\mid\filtration_{m,n}]\mid \tau\in\mathcal S_{m,n}\}$ is closed under maxima.
    To show this last point, let $\tau$ and $\tau^\prime$ be in $\mathcal S_{m,n}$, and define the event \begin{equation*}
        A = \{\E_P[E_{m,\tau}\mid\filtration_{m,n}] > \E_P[E_{m,\tau^\prime}\mid\filtration_{m,n}]\}.
    \end{equation*}
    Define the variable $\sigma = \tau\indicator_A + \tau^\prime\indicator_{A^C}$.
    One immediately sees that $\sigma\geq n$ and $\sigma<\infty$ pointwise, by assumption on $\tau$ and $\tau^\prime$.
    Furthermore, $\sigma\leq r_m$, $Q$-\gls{as} for all $Q\in\subsetProbaMeasures$, also by assumption $\tau$ and $\tau^\prime$.
    Finally, $\sigma$ is an $\filtration_{m,\bullet}$-stopping time.
    Indeed, for any $i<n$, the event $\{\sigma = i\}$ is empty, and thus is in $\filtration_{m,i}$.
    For $i\geq n$, the event $A$ is in $\filtration_{m,n}$, and thus in $\filtration_{m,i}$.
    The conclusion follows from the fact that $\tau$ and $\tau^\prime$ are stopping times.
    Consequently, $\sigma\in\mathcal S_{m,n}$.
    But it also holds that \begin{align*}
        \E_P[E_{m,\sigma}\mid\filtration_{m,n}]
            &= \indicator_A\E_P[E_{m,\tau}\mid\filtration_{m,n}] + \indicator_{A^C}\E_P[E_{m,\tau^\prime}\mid\filtration_{m,n}] \\
            &= \max\{\E_P[E_{m,\tau}\mid\filtration_{m,n}],\E_P[E_{m,\tau^\prime}\mid\filtration_{m,n}]\},
    \end{align*}
    showing closedness under maxima.
    We can thus apply Proposition 45 in \citet{ramdasAdmissibleAnytimevalidSequential2022} and find a sequence $(\tau_{k,n})_{k\in\N}\subset\mathcal S_{m,n}$ as announced.
    A first consequence is that, by the monotone convergence theorem and the tower property, 
    \begin{align*}
        \E_P[L^P_{m,n}] 
            &= \E_P\left[\lim_{k\to\infty}\E_P[E_{m,\tau_{k,n}}\mid\filtration_{m,n}]\right]\\
            &= \lim_{k\to\infty}\E_P\left[\E_P[E_{m,\tau_{k,n}}\mid\filtration_{m,n}]\right]\\
            &= \lim_{k\to\infty}\E_P[E_{m,\tau_{k,n}}]\\
            &\leq \sup_{\sigma\in\rBoundedStoppingTimes_\frm(r_m,\filtration_{m,\bullet},\subsetProbaMeasures)}\sup_{P\in\subsetProbaMeasures}\E_P[E_{m,\sigma}].
    \end{align*}
    It follows from \cref{lemma:integrability of asymptotic e-process} that this last bound is finite, since $m\geq m_0$, and thus $L_{m,n}^P$ is indeed integrable, showing \labelcref{item:characterizations r-SAEP:npsms integrability}.
    Furthermore, this inequality holds for all $m\geq m_0$, $n\in\integers{r_m}$, and $P\in\subsetProbaMeasures$; in particular, it holds with $n=0$ for all $m\geq m_0$ and $P\in\subsetProbaMeasures$.
    Taking the supremum over $P\in\subsetProbaMeasures$ and the limit superior in $m$ and leveraging \cref{lemma:integrability of asymptotic e-process} together with the assumption \labelcref{item:characterizations r-SAEP:ii} shows that \labelcref{item:characterizations r-SAEP:npsms calibration} holds.
    A second consequence follows in the case $n+1\leq r_m$; then, $L_{m,n+1}^P$ is $P$-integrable, and the monotone convergence theorem for conditional expectations and the tower property thus yield
    \begin{align*}
        \E_P[L^P_{m,n+1}\mid\filtration_{m,n}] 
            &= \E_P\left[\lim_{k\to\infty}\E_P[E_{m,\tau_{k,n+1}}\mid\filtration_{m,n+1}]\,\middle|\,\filtration_{m,n}\right]\\
            &= \lim_{k\to\infty}\E_P\left[\E_P[E_{m,\tau_{k,n+1}}\mid\filtration_{m,n+1}]\,\middle|\,\filtration_{m,n}\right]\\
            &= \lim_{k\to\infty}\E_P[E_{m,\tau_{k,n+1}}\mid\filtration_{m,n}]\\
            &\leq L^P_{m,n},\quad P\text{-\gls{as}},
    \end{align*}
    where the last step relies on the fact that $L^P_{m,n}$ is defined via the essential supremum (since $n<n+1\leq r_m$) and that $\tau_{k,n+1}\in\mathcal{S}_{m,n+1}\subset\mathcal{S}_{m,n}$ for all $k\in\N$.
    This holds for all $m\geq m_0$ and $n\in\integers{r_m-1}$, showing that \labelcref{item:characterizations r-SAEP:npsms supermartingale} holds and concluding the proof.
\end{proof}

\characterizationAEP*
\begin{proof}[Proof of \cref{clry:characterizations r-AEP}]
    By definition, $E$ is an $r$-\gls{aep} if, and only if, the process $E\pwMin t := (E_{m,n}\pwMin t)_{m,n\in\N}$ is an $r$-\gls{saep} for all $t\in\Rnn$.
    It results from applying \cref{thm:characterizations r-SAEP} to $E\pwMin t$ for every fixed $t\in\Rnn$ that this last condition is equivalent to $(E_{m,\tau_m}\pwMin t)_{m\in\N}$ being a uniformly strongly asymptotic e-variable for all $\tau=(\tau_m)_{m\in\N}\in\rBoundedStoppingTimes_\frm(r,\filtration,\subsetProbaMeasures)$ and $t\in\Rnn$, which is precisely \labelcref{item:characterizations r-AEP:ii}.
\end{proof}

\aepWithConvergence*
\begin{proof}[Proof of \cref{thm:asymptotic e-processes converge to e-processes}]
    The implication \labelcref{item:asymptotic e-processes converge to e-processes:r-SAEP}$\implies$\labelcref{item:asymptotic e-processes converge to e-processes:r-AEP} is trivial.
    We begin with the implication \labelcref{item:asymptotic e-processes converge to e-processes:r-AEP}$\implies$\labelcref{item:asymptotic e-processes converge to e-processes:e-process}, and assume the existence of an extended integer sequence $r=(r_m)_{m\in\N}$ with $r_m\to\infty$ as $m\to\infty$ such that $E$ is an $r$-\gls{aep} for $\filtration$ and $\subsetProbaMeasures$.
    First, we can assume without loss of generality that \begin{equation}\label{eq:limit expectation of supremum}
        \lim_{m\to\infty}\sup_{P\in\subsetProbaMeasures} (r_m+1)\cdot \E_P\left[\sup_{n\leq r_m}\lvert E_{m,n} - F_n\rvert\right] = 0.
    \end{equation}
    Indeed, define for all $m,n\in\N$ the quantity \begin{equation*}
        x_{m,n} = \sup_{P\in\subsetProbaMeasures}(n+1)\cdot\E_P\left[\sup_{k\leq n}\lvert E_{m,k} - F_k\rvert\right].
    \end{equation*}
    It holds that \begin{align*}
        x_{m,n} &\leq \sup_{P\in\subsetProbaMeasures}(n+1)\cdot\E_P\left[\sum_{k=0}^n\lvert E_{m,k} - F_k\rvert\right]\\
            &\leq\sum_{k=0}^n\sup_{P\in\subsetProbaMeasures} (n+1)\cdot\E_P\left[\lvert E_{m,k} - F_k\rvert\right] =: y_{m,n}.
    \end{align*}
    By assumption, $y_{m,n}\to0$ as $m\to\infty$ for all fixed $n\in\N$ (as the sum of finitely many vanishing terms).
    As a result, there exists by \cref{lemma:rate of uniform boundedness} an integer sequence $s=(s_m)_{m\in\N}$ such that $y_{m,s_m}\to 0$ as $m\to\infty$ and $s_m\to\infty$ as $m\to\infty$.
    The above bound then guarantees $x_{m,s_m}\to 0$ as $m\to\infty$.
    Furthermore, defining $t_m = r_m\pwMin s_m$, one straightforwardly verifies that $x_{m,t_m}\to0$ as $m\to\infty$, as $x_{m,\bullet}$ is nondecreasing, and this last condition is precisely \cref{eq:limit expectation of supremum} where $r$ is formally replaced by $t$.
    Crucially, $t\leq r$, and thus $E$ is also a $t$-\gls{aep}.
    Also, $t_m\to\infty$ as $m\to\infty$.
    In other words, the sequence $t$ satisfies the assumption, and is such that \cref{eq:limit expectation of supremum} is satisfied, which guarantees that the condition can indeed be assumed without loss of generality.
    \par
    Let now $P\in\subsetProbaMeasures$ and $\tau\in\rBoundedStoppingTimes_\frm(\infty,\filtration_{\infty,\bullet},\subsetProbaMeasures)$.
    We construct a sequence $\theta = (\theta_m)_{m\in\N}\in\rBoundedStoppingTimes_\frm(r,\filtration,\subsetProbaMeasures)$ that converges to $\tau$, \gls{as}
    For this, define for all $m,n\in\N$ \begin{align*}
        Z_{m,n} &= \E_P[\indicator_{\{\tau\leq n\}}\mid\filtration_{m,n}],\\
        \text{and}\quad\bar\theta_m &= \inf\left\{k\in\N\mid Z_{m,k}>\frac12\right\}.
    \end{align*}
    The specific choice of $1/2$ for the threshold is unimportant here; the construction works with any threshold in $(0,1)$.
    It is clear that $\bar\theta_m$ is an $\filtration_{m,\bullet}$-stopping time for all $m\in\N$.
    Furthermore, by Lévy's upward theorem \citep[Theorem 14.2]{williams1991martingales}, it follows that for all $n\in\N$ \begin{equation*}
        \lim_{m\to\infty}Z_{m,n} = \lim_{m\to\infty}\E_P[\indicator_{\{\tau\leq n\}}\mid\filtration_{m,n}] = \E_P[\indicator_{\{\tau\leq n\}}\mid\filtration_{\infty,n}] = \indicator_{\{\tau\leq n\}},\quad P\text{-\gls{as}}
    \end{equation*}
    As a result, we have a set $A\in\sigAlg$ with $P[A]=1$ and such that the above convergence holds pointwise on $A$.
    Let $n_0\in\N$, and take $\omega\in A\cap\{\tau = n_0\}$.
    For any $n\geq n_0$, it holds that $Z_{m,n}(\omega)\to \indicator_{\{\tau\leq n\}}(\omega) = 1$ as $m\to\infty$.
    In particular, $Z_{m,n}>1/2$ for $m$ sufficiently large, and thus $\bar\theta_m(\omega)\leq n$ eventually.
    Furthermore, for all $n<n_0$, it holds that $Z_{m,n}(\omega)\to \indicator_{\{\tau\leq n\}}(\omega) = 0$ as $m\to\infty$, and thus  $Z_{m,n}(\omega)<1/2$ for $m$ sufficiently large, showing $\bar\theta_m(\omega)> n$ eventually.
    Combining these two results shows that $\bar\theta_m(\omega)= n_0$ for $m$ sufficiently large.
    Summarizing, we have shown that for all $\omega\in A$, $\bar\theta_m(\omega) \to\tau(\omega)$ as $m\to\infty$, which implies in turn that $\bar\theta_m\to\tau$ as $m\to\infty$, $P$-\gls{as}
    We now define $\theta_m = \bar\theta_m\pwMin r_m$.
    Since $r_m\to\infty$, one easily concludes that $\theta_m\to\tau$ as $m\to\infty$, $P$-\gls{as}
    Furthermore, $\theta\in\rBoundedStoppingTimes_\frm(r,\filtration,\subsetProbaMeasures)$ by construction.
    \par
    Now, observe that, for all $m,n\in\N$, and $t\in\Rnn$, \begin{equation*}
        F_n\pwMin t \leq (F_n - E_{m,n})^+ + E_{m,n}\pwMin t.
    \end{equation*}
    Indeed, this results from the general inequalities $a\leq b + (a-b)^+$, $(c+d)\pwMin t\leq c\pwMin t + d\pwMin t$, and $c\pwMin t\leq c$, where $a$, $b$, $c$ , $d$, and $t$ are in $\Rnn$, and which we apply with $a = F_n$, $b = c = E_{m,n}$, and $d = (F_n - E_{m,n})^+$.
    As a result, for all $P\in\subsetProbaMeasures$, $m\in\N$, and $t\in\Rnn$, it holds that\begin{align*}
        \E_P[F_{\theta_m}\pwMin t] \leq \E_P[(F_{\theta_m} - E_{m,\theta_m})^+] + \E_P[E_{m,\theta_m}\pwMin t].
    \end{align*}
    We bound the first term as \begin{align*}
        \E_P[(F_{\theta_m} - E_{m,\theta_m})^+]
            &\leq \E_P[\lvert F_{\theta_m} - E_{m,\theta_m}\rvert]\\
            &= \E_P\left[\sum_{n=0}^{r_m}\indicator_{\{\theta_m=n\}}\lvert F_{n} - E_{m,n}\rvert\right]\\
            &\leq \E_P\left[\sup_{n\leq r_m}\lvert F_{n} - E_{m,n}\rvert\sum_{n=0}^{r_m}\indicator_{\{\theta_m=n\}}\right]\\
            &\leq (r_m+1)\E_P\left[\sup_{n\leq r_m}\lvert F_{n} - E_{m,n}\rvert\right].
    \end{align*}
    By \cref{eq:limit expectation of supremum}, this quantity vanishes as $m\to\infty$.
    Consequently, leveraging the property that $E$ is an $r$-\gls{aep} and the fact that $\theta\in\rBoundedStoppingTimes_\frm(r,\filtration,\subsetProbaMeasures)$,\begin{equation*}
        \limsup_{m\to\infty}\E_P[F_{\theta_m}\pwMin t] \leq \limsup_{m\to\infty} \E_P[(F_{\theta_m} - E_{m,\theta_m})^+] + \limsup_{m\to\infty}\E_P[E_{m,\theta_m}\pwMin t] \leq 1.
    \end{equation*}
    Next, it follows from the $P$-\gls{as} convergence $\theta_m\to\tau$ as $m\to\infty$ that we also have $F_{\theta_m}\to F_\tau$ as $m\to\infty$, $P$-\gls{as} (since $\tau$ is finite), and thus $F_{\theta_m}\pwMin t\to F_\tau\pwMin t$ as $m\to\infty$, $P$-\gls{as}.
    Consequently, by Fatou's lemma \begin{equation*}
        \E_P[F_\tau\pwMin t] \leq\liminf_{m\to\infty}\E_P[F_{\theta_m}\pwMin t] \leq \limsup_{m\to\infty}\E_P[F_{\theta_m}\pwMin t]\leq 1.
    \end{equation*}
    This holds for all $t\in\Rnn$. 
    By the monotone convergence theorem, $\E_P[F_\tau] = \lim_{n\to\infty}\E_P[F_\tau\pwMin n]$, and thus $\E_P[F_\tau]\leq 1$.
    We can now apply \citet[Lemma 6]{ramdasAdmissibleAnytimevalidSequential2022} to show that the property extends to arbitrary stopping times (not necessarily \gls{as} finite), showing that $F$ is indeed an e-process.
    \par
    We now show \labelcref{item:asymptotic e-processes converge to e-processes:e-process}$\implies$\labelcref{item:asymptotic e-processes converge to e-processes:r-SAEP}, and assume that $F$ is an e-process.
    For all extended integer sequences $r$, all $P\in\subsetProbaMeasures$, all $\tau\in\rBoundedStoppingTimes_\frm(r,\filtration,\subsetProbaMeasures)$, and all $m\in\N$, we have, postponing shortly showing integrability: \begin{equation*}
        \E_P[E_{m,\tau_m}] = \E_P[E_{m,\tau_m} - F_{\tau_m}] + \E_P[F_{\tau_m}] \leq 1 + \E_P[E_{m,\tau_m} - F_{\tau_m}].
    \end{equation*}
    As a result, it suffices to choose $r$ such that $\limsup_{m\to\infty}\sup_{P\in\subsetProbaMeasures} \E_P[E_{m,\tau_m} - F_{\tau_m}] = 0$ for $E$ to be an $r$-\gls{saep}, implicitly leveraging \cref{thm:characterizations r-SAEP} to conclude based on finite stopping times.
    Yet, for any extended integer sequence $r$, \begin{align*}
        \sup_{P\in\subsetProbaMeasures}\E_P[E_{m,\tau_m} - F_{\tau_m}] 
            &\leq \sup_{P\in\subsetProbaMeasures}\E_P[\lvert E_{m,\tau_m} - F_{\tau_m}\rvert]\\
            &= \sup_{P\in\subsetProbaMeasures}\E_P\left[\left\lvert\sum_{n=0}^{r_m} (E_{m,n} - F_n)\cdot\indicator_{\{\tau_m = n\}}\right\rvert\right]\\
            &\leq \sum_{n=0}^{r_m}\sup_{P\in\subsetProbaMeasures}\E_P[\lvert E_{m,n} - F_n\rvert],
    \end{align*}
    where the last step leverages the triangle inequality.
    This shows that any extended integer sequence satisfying the condition of the theorem implies that $E$ is an $r$-\gls{saep}, and guarantees that $E_{m,\tau_m}$ is integrable for $m\geq m_0$ where $m_0\in\N$ is some integer independent of $\tau_m$.
    Furthermore, we can show that such a diverging integer sequence necessarily exists by applying \cref{lemma:rate of uniform boundedness} to the sequence $x_{m,n} = \sum_{i=0}^n\sup_{P\in\subsetProbaMeasures}\E_P[\lvert E_{m,i} - F_i\rvert]$, $m,n\in\N$, as it satisfies $x_{m,n}\to 0$ as $m\to\infty$ for all fixed $n\in\N$ by assumption, showing the implication and the claims involving \cref{eq:r of converging asymptotic e-process} and concluding the proof.
\end{proof}
\ifpreprint\subsection{Proofs for Section 4}\fi
\asmImpliesAsp*
\begin{proof}[Proof of \cref{thm:asm implies asp}]
    Let $S:= (S_n)_{n\in\N}$ be a supermartingale for $\subsetProbaMeasures$ with respect to $\filtration_{\infty,\bullet}$ such that $E$ converges to $S$ in $L_1$ uniformly in $\subsetProbaMeasures$.
    Fix $n\in\N$.
    We decompose $\eta_{m,n}$ as follows, for all $m,n\in\N$ and $P\in\subsetProbaMeasures$, recalling that $E_{m,n+1}$ is $P$-integrable by \cref{def:l1 convergence of bi-indexed process}:\begin{align*}
        \eta_{m,n} &= \E_P[E_{m,n+1} - S_{n+1} \mid\filtration_{m,n}] \\
            &+ \E_P[S_{n+1} - S_n\mid \filtration_{m,n}] \\
            &+ \E_P[S_n\mid \filtration_{m,n}]  - S_n\\
            &+ S_n - E_{m,n},\quad P\text{-\gls{as}}
    \end{align*}
    Define each of these terms as $A_m = \E_P[E_{m,n+1} - S_{n+1} \mid\filtration_{m,n}]$, $B_m = \E_P[S_{n+1} - S_{n}\mid \filtration_{m,n}]$, $C_m = \E_P[S_{n}\mid \filtration_{m,n}]  - S_{n}$, and $D_m = S_{n} - E_{m,n}$.
    We handle $B_m$ immediately as follows:\begin{align*}
        B_m &= \E_P\left[\E_P[S_{n+1} - S_{n}\mid \filtration_{\infty,n}]~\middle|~\filtration_{m,n}\right]\\
            &= \E_P\left[\E_P[S_{n+1}\mid \filtration_{\infty,n}] - S_n~\middle|~\filtration_{m,n}\right]\\
            &\leq \E_P\left[0\mid \filtration_{m,n}\right]\\
            &\leq 0,\quad P\text{-\gls{as}}
    \end{align*}
    The first three steps come respectively from the facts that $\filtration_{m,n}\subset\filtration_{\infty,n}$, that $S_n$ is $\filtration_{\infty,n}$-measurable, and that $\E_P[S_{n+1}\mid \filtration_{\infty,n}] - S_n\leq 0$, $P$-\gls{as}, since $S$ is an $\filtration_{\infty,\bullet}$-supermartingale.
    As a result, $\eta_{m,n}\leq A_m + C_m + D_m$, $P$-\gls{as}, and thus\begin{equation*}
        \sup_{P\in\subsetProbaMeasures}\E_P[\eta_{m,n}^+] \leq \sup_{P\in\subsetProbaMeasures}\norm{A_m}_P + \sup_{P\in\subsetProbaMeasures}\norm{C_m}_P + \sup_{P\in\subsetProbaMeasures}\norm{D_m}_P.
    \end{equation*}
    We handle each term separately.
    First, it follows from the tower property and the contraction property of conditional expectations that, for all $P\in\subsetProbaMeasures$, \begin{align*}
        \norm{A_m}_P = \E_P\left[\left\lvert\,\E_P[E_{m,n+1} - S_{n+1}\mid\filtration_{m,n}]\right\rvert\right]
        \leq\E_P[\lvert E_{m,n+1} - S_{n+1}\rvert],
    \end{align*}
    which goes to $0$ uniformly in $P\in\subsetProbaMeasures$ as $m\to\infty$ by \cref{def:l1 convergence of bi-indexed process}.
    The term $\sup_{P\in\subsetProbaMeasures}\norm{D_m}_P$ also vanishes asymptotically for the same reason.
    For the term $C_m$, we have, for all $m\in\N$,
    \begin{align*}
        \sup_{P\in\subsetProbaMeasures}\norm{C_m}_P
            &= \sup_{P\in\subsetProbaMeasures}\norm{\E_P[S_n\mid \filtration_{m,n}] - S_n}_P\\
            &= \sup_{P\in\subsetProbaMeasures}\norm{\E_P[S_n\mid \filtration_{m,n}] - \E_P[S_n\mid\filtration_{\infty,n}]}_P\\
            &\leq \sup_{P\in\subsetProbaMeasures}\norm{\E_P[S_n\mid \filtration_{m,n}] - \E_P[E_{m,n}\mid \filtration_{m,n}]}_P \\
            &\quad+ \sup_{P\in\subsetProbaMeasures}\norm{\E_P[E_{m,n}\mid \filtration_{m,n}] - \E_P[E_{m,n}\mid \filtration_{\infty,n}]}_P \\
            &\quad+ \sup_{P\in\subsetProbaMeasures}\norm{\E_P[E_{m,n}\mid \filtration_{\infty,n}] - \E_P[S_n\mid\filtration_{\infty,n}]}_P\\
            &\leq 2\sup_{P\in\subsetProbaMeasures}\norm{S_n - E_{m,n}}_P + \sup_{P\in\subsetProbaMeasures}\norm{\E_P[E_{m,n}\mid \filtration_{m,n}] - \E_P[E_{m,n}\mid \filtration_{\infty,n}]}_P.
    \end{align*}
    Indeed, the above relations follow successively from the definition of $C_m$, the $\filtration_{\infty,n}$-measurability of $S_n$, the triangle inequality for $\sup_{P\in\subsetProbaMeasures}\norm{\cdot}_P$, and the contraction property of conditional expectations on $L_1(\Omega,\sigAlg,P)$ for all $P\in\subsetProbaMeasures$.
    Now, crucially, $E_{m,n}$ is $\filtration_{m,n}$-measurable, and thus $\filtration_{\infty,n}$-measurable as well. Therefore, $\E_P[E_{m,n}\mid \filtration_{m,n}] = \E_P[E_{m,n}\mid \filtration_{\infty,n}] = E_{m,n}$, $P$-\gls{as}, and the second term is $0$.
    In addition, the first term vanishes as $m\to\infty$ by \cref{def:l1 convergence of bi-indexed process}.
    This shows that $\sup_{P\in\subsetProbaMeasures}\E_P[\eta_{m,n}^+]$ goes to $0$ as $m\to\infty$ for all $n\in\N$, and concludes the proof.
\end{proof}

\rSummableIncrements*
\begin{proof}[Proof of \cref{lemma:r-summable increments}]
    For all $m,n\in\N$, define 
    \begin{equation*}
        x_{m,n} = \sup_{P\in\subsetProbaMeasures}\E_P\left[\sum_{k=0}^{n-1}\eta_{m,k}^+\right].
    \end{equation*}
    By assumption, $\lim_{m\to\infty}x_{m,n} = 0$ for all $n\in\N$.
    The existence of $r$ such that \cref{eq:r-summable increments} holds follows from \cref{lemma:rate of uniform boundedness}.
\end{proof}

\aspImpliesSaep*
\begin{proof}[Proof of \cref{thm:asp implies asymptotic e-process}]
    For all $P\in\subsetProbaMeasures$ and $m\in\N$, the Doob decomposition of $E_{m,\bullet}$ exists on $(\Omega,\sigAlg,P)$, since the process is $P$-integrable \citep[Theorem 12.11]{williams1991martingales}.
    Fix then $P\in\subsetProbaMeasures$, and write this decomposition as $E_{m,n} = M_{m,n} + A_{m,n}$, where $M_{m,\bullet}$ is a martingale and \begin{align*}
        M_{m,n} &= E_{m,0} + \sum_{k=1}^{n} E_{m,k} - \E_P[E_{m,k}\mid\filtration_{m,k-1}],\quad\text{and}\\ 
        A_{m,n} &= \sum_{k=1}^n\E_P[E_{m,k}\mid \filtration_{m,k-1}] - E_{m,k-1},
    \end{align*}
    for all $m,n\in\N$.
    Here, we omitted again the dependence of the variables on $P$ for readability.
    Furthermore, the following upper bound holds: 
    \begin{equation*}
        A_{m,n}\leq \sum_{k=1}^n (\E_P[E_{m,k}\mid \filtration_{m,k-1}] - E_{m,k-1})^+ = \sum_{k=1}^n \eta_{m,k-1}^+ 
        =: \Delta_{m,n},\quad P\text{-\gls{as}},
    \end{equation*}
    and thus $E_{m,n} \leq M_{m,n} + \Delta_{m,n}$, $P$-\gls{as} and for all $m,n\in\N$.
    \par
    Let then $r$ be an extended integer sequence that satisfies \cref{eq:r-summable increments} and $\tau=(\tau_m)_{m\in\N}\in\rBoundedStoppingTimes_\frm(r,\filtration,\subsetProbaMeasures)$.
    It follows from what precedes that \begin{equation*}
        \forall m\in\N,\quad\E_P[E_{m,\tau_m}]\leq \E_P[M_{m,\tau_m}] + \E_P[\Delta_{m,\tau_m}].
    \end{equation*}
    We bound each term in the \gls{rhs} separately, and begin with the second one.
    Specifically, by monotonicity of $\Delta_{m,\bullet}$ and the fact that $\tau_m\leq r_m$, for $m\in\N$ and $P$-\gls{as}, it holds that \begin{equation*}
        \forall m\in\N,\quad\E_P[\Delta_{m,\tau_m}]\leq \E_P\left[\Delta_{m,r_m}\right]\leq\sup_{Q\in\subsetProbaMeasures}\E_Q\left[\Delta_{m,r_m}\right].
    \end{equation*}
    It follows from \cref{eq:r-summable increments} that there exists $m_0\in\N$ independent of $P\in\subsetProbaMeasures$ such that this last term is finite for all $m\geq m_0$. We thus move on to the first term, to which we apply
   the optional stopping theorem for supermartingales with integrable lower bound (\cref{clry:optional sampling lower bounded supermartingales}).
    Specifically, notice that for all $m\in\N$, nonnegativity of $E$ entails that the martingale $M_{m,\bullet}$ satisfies \begin{equation*}
        \forall n\in\integers{r_m},\quad M_{m,n} \geq -\Delta_{m,n} \geq -\Delta_{m,r_m}.
    \end{equation*}
    Furthermore, the \gls{rhs} is $P$-integrable for all $m\geq m_0$.
    Since a martingale is also a supermartingale, we can apply \cref{clry:optional sampling lower bounded supermartingales}, which yields \begin{equation*}
        \forall m\geq m_0,\quad\E_P[M_{m,\tau_m}]\leq \E_P[M_{m,0}] = \E_P[E_{m,0}].
    \end{equation*}
    Putting everything together, we have shown that \begin{equation*}
        \forall m\geq m_0,\quad\E_P[E_{m,\tau_m}]\leq \E_P[E_{m,0}] + \E_{P}[\Delta_{m,r_m}].
    \end{equation*}
    Since $m_0$ is independent of $P$, taking the supremum over $P\in\subsetProbaMeasures$ and the limit superior over $m\geq m_0$ shows that \begin{align*}
        \limsup_{m\to\infty} \sup_{P\in\subsetProbaMeasures}\E_P[E_{m,\tau_m}]
            \leq \limsup_{m\to\infty}\sup_{P\in\subsetProbaMeasures}\E_P[E_{m,0}] + \limsup_{m\to\infty}\sup_{P\in\subsetProbaMeasures}\E_{P}[\Delta_{m,r_m}]
            \leq 1 + 0
            = 1,
    \end{align*}
    where the second inequality comes from the calibration condition \cref{eq:calibration} and the assumption \cref{eq:r-summable increments}.
    This is true for all $\tau\in\rBoundedStoppingTimes_\frm(r,\filtration,\subsetProbaMeasures)$; we then deduce from \cref{thm:characterizations r-SAEP} that $E$ is indeed an $r$-\gls{saep}.
\end{proof}

\aspAndConvergenceImplyAsm*
\begin{proof}[Proof of \cref{thm:asp and convergence implies asm}]
    Let $n\in\N$ and $P\in\subsetProbaMeasures$; we show that $\E_P[S_{n+1}\mid\filtration_{\infty,n}]\leq S_n$, $P$-\gls{as}
    For all $m\in\N$, \begin{align*}
        \eta_{m,n} =~& \E_P[E_{m,n+1}-S_{n+1}\mid\filtration_{m,n}] \\
            &+ \E_P[S_{n+1}\mid\filtration_{m,n}] - \E_P[S_{n+1}\mid\filtration_{\infty,n}]\\
            &+ \E_P[S_{n+1}\mid\filtration_{\infty,n}] - S_{n}\\
            &+ S_n - E_{m,n},\quad P\text{-\gls{as}}
    \end{align*}
    Introducing $\sigma_{n} = \E_P[S_{n+1}\mid\filtration_{\infty,n}] - S_n$, it follows that \begin{align*}
        \eta_{m,n} - \sigma_{n} =~&\E_P[E_{m,n+1}-S_{n+1}\mid\filtration_{m,n}] \\
            &+ \E_P[S_{n+1}\mid\filtration_{m,n}] - \E_P[S_{n+1}\mid\filtration_{\infty,n}] \\
            &+ S_n - E_{m,n},\quad P\text{-\gls{as}}
    \end{align*}
    We show that 
    the \gls{rhs} goes to $0$ in $L_1(\Omega,\sigAlg,P)$ as $m\to\infty$.
    Indeed, the first term vanishes by the contraction property of conditional expectations and $L_1$ convergence of $E_{\bullet,n+1}$ to $S_{n+1}$.
    The last term also vanishes by $L_1$ convergence of $E_{\bullet,n}$ to $S_{n}$.
    Finally, the second term 
    is handled by Lévy's upward theorem \citep[Theorem 14.2]{williams1991martingales}
    , which precisely guarantees that \begin{equation*}
        \lim_{m\to\infty}\E_P[S_{n+1}\mid\filtration_{m,n}] = \E_P[S_{n+1}\mid\filtration_{\infty,n}],
    \end{equation*}
    both in $L_1(\Omega,\sigAlg,P)$ and $P$-\gls{as}
    As a result, $\eta_{m,n}\to\sigma_n$ as $m\to\infty$ in $L_1(\Omega,\sigAlg,P)$.
    It follows immediately that \begin{equation*}
        \norm{\sigma_n^+}_{P} = \lim_{m\to\infty}\norm{\eta_{m,n}^+}_P = 0,
    \end{equation*}
    and thus $\sigma_n^+ = 0$, $P$-\gls{as}, concluding the proof.
\end{proof}
\ifpreprint\subsection{Proofs for Section 5}\fi
\ville*
\begin{proof}[Proof of \cref{thm:asymptotic-ville}]
    We show the result for $r$-\glspl{saep}, as the result for $r$-\glspl{aep} follows by applying the stronger result to the $r$-\gls{saep} $(E_{m,n}\pwMin t_m)_{m,n\in\N}$.
    For all $\alpha\in(0,1)$ and $m\in\N$, define \begin{equation*}
        \tau_m := \inf\left\{n\in\integers{r_m}~\middle|~E_{m,n}\geq\frac1\alpha\right\},
    \end{equation*}
    with the convention that $\tau_m = r_m$ if the set is empty.
    Since $(E_{m,n})_{n\in\N}$ is adapted to $\filtration_{m,\bullet}$, $\tau_m$ is an $\filtration_{m,\bullet}$-stopping time.
    Furthermore, it is certainly less than $r_m$, $P$-\gls{as} for all $P\in\subsetProbaMeasures$, and thus $\tau\in\rBoundedStoppingTimes(r,\filtration,\subsetProbaMeasures)$.
    By assumption on $E$, it follows that $\limsup_{m\to\infty}\sup_{P\in\subsetProbaMeasures}\E_P[E_{m,\tau_m}]\leq 1$.
    Yet, for all $P\in\subsetProbaMeasures$ and $m\in\N$, \begin{equation*}
        \left\{\sup_{n\in\integers{r_m}} E_{m,n}\geq\frac1\alpha\right\}=\left\{E_{m,\tau_m}\geq\frac1\alpha\right\},
    \end{equation*}
    by definition of $\tau_m$.
    By Markov's inequality applied to $E_{m,\tau_m}$, it follows that \begin{equation*}
        P\left[\sup_{n\in\integers{r_m}}E_{m,n}\geq\frac1\alpha\right] = P\left[E_{m,\tau_m}\geq\frac1\alpha\right]\leq \frac1\alpha\E_P[E_{m,\tau_m}].
    \end{equation*}
    Taking the supremum over $P\in\subsetProbaMeasures$ and the limit superior over $m\in\N$ concludes the proof.
\end{proof}
\ifpreprint\subsection{Proofs for Section 6}\fi
\cumulativeProduct*
\begin{proof}[Proof of \cref{thm:cumulative product}]
    We reintroduce the notation \begin{equation*}
        \eta_{m,n} = \E_P[E_{m,n+1}\mid\filtration_{m,n}] - E_{m,n},
    \end{equation*}
    first used in \cref{def:asp} with $m,n\in\N$ and $P\in\subsetProbaMeasures$ and define \begin{equation*}
        \epsilon_{m,n} = \E_P[e_{m,n+1}\mid\filtration_{m,n}] - 1.
    \end{equation*}
    We have $\eta_{m,n} = E_{m,n}\cdot\epsilon_{m,n}$, and thus by nonnegativity of $E$, \begin{equation*}
        \eta_{m,n}^+ = E_{m,n}\cdot\epsilon_{m,n}^+.
    \end{equation*}
    The criterion of the \gls{asp} for $E$ is that $\eta_{m,n}^+\to 0$ in $L_1$ as $m\to\infty$ for all $n\in\N$.
    This can be established under \cref{eq:conditionally uniformly SAEV}.
    Specifically, it follows from \cref{eq:conditionally uniformly SAEV} that \begin{equation*}
        \E_P[\eta_{m,n}^+] = \E_P[E_{m,n}\cdot\epsilon_{m,n}^+] = \E_P[E_{m,n}\E_{P}[\epsilon_{m,n}^+\mid\filtration_{m,n}]]\leq \E_P[E_{m,n}]\varepsilon_m.
    \end{equation*}
    Here, the second equality follows from the tower property and $\filtration_{m,n}$-measurability of $E_{m,n}$ and the last one from the fact that $\epsilon_{m,n}\leq \varepsilon_m$, \gls{as}, by \cref{eq:conditionally uniformly SAEV}.
    Furthermore, for all $n\geq 1$,\begin{align*}
        \E_P[E_{m,n}] &= \E_P\left[E_{m,n-1}\cdot e_{m,n}\right]\\
            &= \E_P[\E_P[E_{m,n-1}\cdot e_{m,n}\mid\filtration_{m,n-1}]]\\
            &= \E_P[E_{m,n-1}\cdot \E_P[e_{m,n}\mid\filtration_{m,n-1}]]\\
            &\leq\E_P[E_{m,n-1}]\cdot(1+\varepsilon_m),
    \end{align*}
    and thus by induction \begin{equation}\label{eq:induction outcome}
        \E_P[E_{m,n}]\leq \E_P[E_{m,0}]\cdot(1+\varepsilon_m)^n = \E_P[e_{m,0}]\cdot(1+\varepsilon_m)^n,\quad\forall n\in\N.
    \end{equation}
    Putting it all together, for all $m,n\in\N$ and $P\in\subsetProbaMeasures$,\begin{equation*}
        \E_P[\eta_{m,n}^+]\leq \E_P[e_{m,0}]\cdot \varepsilon_m^P\cdot(1+\varepsilon_m^P)^n.
    \end{equation*}
    Under the assumptions that $\sup_{P\in\subsetProbaMeasures}\E_P[e_{\bullet,0}]$ is bounded and that $\varepsilon_m\to 0$ as $m\to\infty$, it follows that $E$ has the \gls{asp}.
    Assume additionally that $e_{\bullet,0}$ is a uniformly strongly asymptotic e-variable.
    This is equivalent to $E$ being asymptotically calibrated since $\E_P[e_{m,0}] = \E_P[E_{m,0}]$, for all $m\in\N$, and $e_{\bullet,n}$ is then a uniformly strongly asymptotic e-variable for all $n\in\N$ by \cref{eq:induction outcome}.
    We find sequences $r$ that make $E$ an $r$-\gls{saep} by leveraging \cref{thm:asp implies asymptotic e-process}.
    For any sequence $r = (r_m)_{m\in\N}\subset\N$ and $m\in\N$, \begin{align*}
        \sup_{P\in\subsetProbaMeasures}\E_P\left[\sum_{n=0}^{r_m-1}\eta_{m,n}^+\right] 
            &\leq \sup_{P\in\subsetProbaMeasures}\E_P[E_{m,0}]\cdot \sum_{n=0}^{r_m-1}\varepsilon_m\cdot(1+\varepsilon_m)^n\\
            &=\sup_{P\in\subsetProbaMeasures}\E_P[E_{m,0}]\cdot \left[(1+\varepsilon_m)^{r_m} - 1\right]
    \end{align*}
    It follows that a sufficient condition for $E$ to be an $r$-asymptotic e-process is that $(1+\varepsilon_m)^{r_m}\to 1$ as $m\to\infty$.
    Simple manipulations show that this holds when $r_m\cdot \varepsilon_m \to 0$ as $m\to\infty$, concluding the proof.
\end{proof}

\domination*
\begin{proof}[Proof of \cref{thm:domination}]
    Let $t\in\Rnn$ and $\tau = (\tau_m)_{m\in\N}\in\rBoundedStoppingTimes(r,\filtration,\subsetProbaMeasures)$.
    For all $m\in\N$ and $P\in\subsetProbaMeasures$, \begin{equation*}
        \E_P[E_{m,\tau_m}\pwMin t] 
            = \E_P[(E_{m,\tau_m}\pwMin t)\indicator_{A_m}] + \E_P[(E_{m,\tau_m}\pwMin t)\indicator_{A_m^\complement}]
            \leq \E_P[E_{m,\tau_m}^\ast\pwMin t] + t P[A_m^\complement].
    \end{equation*}
    The claim that $E$ is an $r$-\gls{aep} follows by taking the supremum over $P\in\subsetProbaMeasures$ and the limit superior over $m\in\N$.
    Furthermore, for a sequence $(t_m)_{m\in\N}$ as described, applying what precedes with $t = t_m$ immediately shows that $(E_{m,n}\pwMin t_m)_{m,n\in\N}$ is indeed an $r$-\gls{saep}.
    Finally, the existence of such a sequence $(t_m)_{m\in\N}$ follows a from diagonal argument similar to that of \cref{clry:rate of uniform boundedness 2}.
\end{proof}

\dominationViaMonotonicity*
\begin{proof}[Proof of \cref{clry:domination via monotonicity}]
    The conclusion follows from an application of \cref{thm:domination}, noting that $D_m\subset A_m$, where $A_m$ is introduced in \cref{thm:domination}, $m\in\N$.
    The fact that $E$ is adapted to $\filtration$ results from the measurability assumption on $(\omega,\theta)\mapsto E_{m,n}(\theta)(\omega)$, $m,n\in\N$, and the fact that $\bar\theta$ is adapted to $\filtration$.
\end{proof}

\overestimationViaConvergence*
\begin{proof}[Proof of \cref{lemma:overestimation via convergence}]
    We focus on the case with \gls{as} uniform in $\subsetProbaMeasures$ convergence; the other case follows similarly.
    The existence of a sequence $\epsilon$ as announced follows from a diagonal argument; e.g., applying \cref{clry:rate of uniform boundedness 2} to the family $f_m:\epsilon\in\Rp\mapsto a_m^\prime(\epsilon)+1$.
    This yields a sequence $(\epsilon_m)_{m\in\N}$ such that $\limsup_{m\to\infty}a_m^\prime(\epsilon_m)+1\leq 1$, from where it follows immediately that $a_m^\prime(\epsilon_m)\to 0$ as $m\to\infty$.
    Then, the existence of $t = (t_m)_{m\in\N}$ as announced follows again from a diagonal argument, applying for instance \cref{clry:rate of uniform boundedness 2} again to $f_m^\prime:t\in\Rnn\mapsto t\cdot a_m^\prime(\epsilon_m)+1$.
    Let $b=(b_{m,n})_{m,n\in\N}$ be an integer-valued process, and introduce $\hat\theta_{m,m+b_{m,n}}$ for $m,n\in\N$, as well as the events \begin{align*}
        B_m &= \left\{\forall n\in\N,~\lvert\hat\theta_{m+b_{m,n}}-\theta^\ast\rvert \leq \epsilon_m\right\},\\
        C_m &= \left\{\lvert\sup_{k\geq m}\hat\theta_{k}-\theta^\ast\rvert \leq \epsilon_m\right\}.
    \end{align*}
    It follows from the fact that $m+b_{m,n}\geq m$ for all $n\in\N$ that $C_m\subset B_m$.
    But it holds that, on $B_m$, $\hat\theta_{m+b_{m,n}} - \theta^\ast \geq -\epsilon_{m}$, and thus $\bar\theta_{m,n}\geq\theta^\ast$, for all $n\in\N$.
    As a result, $C_m\subset B_m\subset D_m$, where $D_m$ is defined in \cref{clry:domination via monotonicity}.
    It follows that \begin{equation*}
        \lim_{m\to\infty}\inf_{P\in\subsetProbaMeasures} P[D_m] \geq \lim_{m\to\infty}\inf_{P\in\subsetProbaMeasures} P[C_m] = 1,
    \end{equation*}
    and the conclusion follows from \cref{clry:domination via monotonicity}.
\end{proof}

\burnIn*
\begin{proof}[Proof of \cref{thm:r-aep from burn in of single data stream}]
    We verify that $\bar\theta$ is adapted to the filtration sequence $\filtration$.
    We focus on the case of \gls{as} convergence uniformly in $\subsetProbaMeasures$, as it is the more challenging one.
    For every $m,n\in\N$, the variable $\hat\theta_{m\pwMax n}$ is $\mathcal G_{m\pwMax n}$-measurable by assumption on $\hat \theta$.
    Consequently, $\bar\theta_{m,n}:=\hat\theta_{m\pwMax n}+\epsilon_m$ is as well.
    Furthermore, $(\omega,\theta)\mapsto E_n(\theta)(\omega)$ is $\mathcal G_n\otimes\mathcal B(\R)$-measurable by assumption, and thus is $\mathcal G_{m\pwMax n}\otimes\mathcal B(\R)$-measurable.
    Therefore, the composition $E_n(\bar\theta_{m,n})$ is $\mathcal G_{m\pwMax n}$-measurable, showing that $(E_n(\bar\theta_{m,n}))_{m,n\in\N}$ is adapted to $\filtration$.
    The rest follows immediately from \cref{lemma:overestimation via convergence} and \cref{clry:domination via monotonicity}.
\end{proof}

\anytimePValuesAreInftyApps*
\begin{proof}[Proof of \cref{thm:anytime p-values are infty-APPs}]
    We begin with the direct implication and take $\tau$ such a sequence of stopping times.
    One verifies that, for all $m\in\N$ and $\alpha\in(0,1)$,
    \begin{equation*}
        \{p_{m,m+\tau_m}\leq\alpha\}\subset\{\exists k\geq m, p_{m,k}\leq \alpha\}.
    \end{equation*}
    Taking the supremum over $P$ and the limit superior over $m$ shows the result.
    \par
    For the converse implication, define for all $m,n\in\N$,\begin{equation*}
        E_{m,n} = \alpha^{-1}\indicator_{\{p_{m,m+n}\leq \alpha\}}.
    \end{equation*}
    It follows from the assumption that $E$ is an $\infty$-\gls{saep} for $\bar\filtration$ and $\subsetProbaMeasures$.
    Indeed, $E$ is adapted to $\bar \filtration$, and for any $\sigma\in\rBoundedStoppingTimes_\frm(\infty,\bar\filtration_{m,\bullet},\subsetProbaMeasures)$ and $P\in\subsetProbaMeasures$, \begin{equation*}
        \E_P[E_{m,\sigma}] = \alpha^{-1}P[p_{m,m+\sigma}\leq\alpha].
    \end{equation*}
    Therefore, $E$ satisfies \cref{thm:characterizations r-SAEP}\labelcref{item:characterizations r-SAEP:ii}.
    It follows from \cref{thm:characterizations r-SAEP} that $E$ is an $\infty$-\gls{saep}, and thus \begin{equation*}
        \limsup_{m\to\infty}\sup_{P\in\subsetProbaMeasures} \E_P[E_{m,\tau_m}]\leq 1,
    \end{equation*}
    for all $(\tau_m)_{m\in\N}\in\rBoundedStoppingTimes(\infty,\bar\filtration,\subsetProbaMeasures)$, and not only in $\rBoundedStoppingTimes_\frm(\infty,\bar\filtration,\subsetProbaMeasures)$.
    In particular, this is the case for the sequence of stopping times defined as \begin{equation*}
        \tau_m = \inf\{n\in\N\mid p_{m,m+n}\leq\alpha\}.
    \end{equation*}
    Deviating exceptionally from the convention \cref{eq:convention evaluation at infinity}, define $p_{m,\infty} = \liminf_{n\in\N} p_{m,m+n}$.
    It holds that, for all $m\in\N$ \begin{equation*}
        E_{m,\tau_m} = \alpha^{-1}\indicator_{\{p_{m,m+\tau_m}\leq\alpha\}},
    \end{equation*}
    pointwise.
    Indeed, this is clear by definition of $E$ on the event $\{\tau_m<\infty\}$, and we have the equivalences \begin{align*}
        E_{m,\infty} = \alpha^{-1} 
        &\iff \limsup_{n\to\infty}\indicator_{\{p_{m,m+n}\leq\alpha\}} = 1\\
        &\iff \forall N\in\N, \exists n\geq N, p_{m,m+n}\leq\alpha\\
        &\iff \forall N\in\N, \inf_{n\geq N} p_{m,m+n}\leq\alpha\\
        &\iff \lim_{N\to\infty} \inf_{n\geq N} p_{m,m+n}\leq\alpha\\
        &\iff \liminf_{n\to\infty} p_{m,m+n}\leq\alpha.
    \end{align*}
    Consequently, for all $P\in\subsetProbaMeasures$ and $m\in\N$,\begin{align*}
        \E_P[E_{m,\tau_m}] 
            = \alpha^{-1}P[p_{m,m+\tau_m}\leq\alpha]
            = \alpha^{-1}P[\exists n\in\N, p_{m,m+n}\leq \alpha],
    \end{align*}
    where the second equality results from the definition of $\tau_m$.
    Taking the supremum over $P\in\subsetProbaMeasures$ and the limit superior as $m\to\infty$ concludes the proof.
\end{proof}

\calibration*
\begin{proof}[Proof of \cref{thm:calibration}]
    Let $\tau = (\tau_m)_{m\in\N}\in\rBoundedStoppingTimes_\frm(r,\filtration,\subsetProbaMeasures)$.
    If $p$ is an $r$-\gls{app}, then it follows from \citet[Proposition 3.12]{ignatiadisAsymptoticCompoundEvalues2024} that $(E_{m,\tau_m})_{m\in\N}$ is a uniformly asymptotic e-variable.
    Since this holds for all $\tau$, this shows from \cref{clry:characterizations r-AEP} that $E$ is an $r$-\gls{aep}.
    If $p$ is an $r$-\gls{sapp}, the same reasoning but invoking \cref{thm:characterizations r-SAEP} shows that it is an $r$-\gls{saep}.
    The last case is when $p$ is an $r$-\gls{app} but not an $r$-\gls{sapp}, and $f$ is bounded.
    Then, it follows that $(E_{m,n}\pwMin t)_{m,n\in\N}$ is an $r$-\gls{saep} for all $t\in\Rnn$, by what precedes.
    This holds in particular for any $t > \lVert f\rVert_\infty$, for which $E_{m,n} \pwMin t = E_{m,n}$ for all $m,n\in\N$, showing the result.
\end{proof}
\section{Auxiliary results}
We collect in this section auxiliary technical results that are useful in the proofs of this paper.
\paragraph*{Rate of uniform boundedness\preprintdot}
The first two lemmas are diagonal arguments on double-indexed families that are bounded asymptotically along one axis.
\begin{lemma}\label{lemma:rate of uniform boundedness}
Let $(x_{m,n})_{m,n\in\mathbb{N}}$ be a doubly indexed family of real numbers such that
\begin{equation*}
    \limsup_{m\to\infty} x_{m,n}\le 1
    \text{ for every } n\in\mathbb{N}.
\end{equation*}
Then there exists a nondecreasing sequence 
$(r_m)_{m\in\mathbb{N}} \subset \N$ 
with $r_m\to\infty$ such that
\begin{equation*}
    \limsup_{m\to\infty} x_{m,r_m}\le 1.
\end{equation*}
\end{lemma}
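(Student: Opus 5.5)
The plan is a standard diagonal construction: choose thresholds $M_0<M_1<\dots$ along the $m$-axis after which the first $k+1$ columns are all close to $1$, and let $r_m$ be the number of thresholds already passed. For each $k\in\N$, the hypothesis $\limsup_{m\to\infty}x_{m,n}\le 1$ for $n\in\integers{k}$ (finitely many indices) yields an integer $N_k$ such that
\begin{equation*}
    \forall m\geq N_k,~\forall n\in\integers{k},\quad x_{m,n}\leq 1+\frac{1}{k+1}.
\end{equation*}
Indeed, for each fixed $n$ there is a threshold after which $x_{m,n}\le 1+1/(k+1)$, and $N_k$ is the maximum of these $k+1$ thresholds. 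We then replace $N_k$ by $M_k := \max\{N_0,\dots,N_k\}+k$, so that $(M_k)_{k\in\N}$ is strictly increasing and diverges, and the displayed property still holds with $M_k$ in place of $N_k$.

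We then define $r_m := \max\{k\in\N\mid M_k\leq m\}$ for $m\geq M_0$, and $r_m := 0$ for $m<M_0$. This maximum is attained and finite because $M_k\to\infty$. The sequence $r$ is nondecreasing in $m$ by construction, since the set $\{k\mid M_k\le m\}$ only grows with $m$. It diverges because, for every $K$, $m\geq M_K$ implies $r_m\geq K$.

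To obtain the bound, fix $m\geq M_0$ and set $k:=r_m$. Then $m\geq M_k$ and $r_m\in\integers{k}$ (trivially, $r_m=k$), so
\begin{equation*}
    x_{m,r_m}\leq 1+\frac{1}{r_m+1}.
\end{equation*}
Since $r_m\to\infty$, taking $\limsup_{m\to\infty}$ gives $\limsup_{m\to\infty} x_{m,r_m}\leq 1$.

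There is no genuine obstacle. The only points needing care are making $M_k$ strictly increasing, so that $r_m$ is well defined and monotone, and requiring the bound uniformly over the whole initial segment $\integers{k}$ rather than only at $n=k$. That uniformity is not needed for this exact definition, since $r_m=k$, but it keeps the argument robust and makes the monotonicity of $r$ harmless.
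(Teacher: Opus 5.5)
Your proposal is correct and is essentially the paper's diagonal construction: pick strictly increasing thresholds after which column $n$ stays below $1+\frac{1}{n}$, and let $r_m$ be the index of the last threshold passed. Your version differs only in small ways: using $\frac{1}{k+1}$ avoids the paper's undefined $\frac{1}{n}$ at $n=0$, and the extra uniformity over $\integers{k}$ is harmless but, as you note, not needed.
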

\begin{proof}
For each $n \in \mathbb{N}$, the assumption
\begin{equation*}
    \limsup_{m\to\infty} x_{m,n} \le 1
\end{equation*}
means that there exists $N_n \in \mathbb{N}$ such that
\begin{equation*}
x_{m,n} \le 1 + \frac{1}{n}
\text{ for every }
m \ge N_n.
\end{equation*}
By increasing the numbers $N_n$ recursively if necessary, we may additionally assume that the sequence $(N_n)$ is strictly increasing and $N_n \to \infty$.

Now define $(r_m)_{m\in\mathbb{N}}$ by setting $r_m = 1$ for $m < N_1$, and, for $m \ge N_1$, letting
\begin{equation*}
r_m = n \qquad \text{whenever } N_n \le m < N_{n+1}.
\end{equation*}
Since $(N_n)$ is strictly increasing, the sequence $(r_m)_{m\in\N}$ is well defined and nondecreasing. Moreover, because $N_n \to \infty$, we also have $r_m \to \infty$ as $m \to \infty$.

With this construction, we have
\begin{equation*}
x_{m,r_m} = x_{m,n} \le 1 + \frac{1}{n} = 1 + \frac{1}{r_m}.
\end{equation*}
Since $r_m \to \infty$, we have $1 + \frac{1}{r_m} \to 1$. Therefore
\begin{equation*}
\limsup_{m\to\infty} x_{m,r_m} \le 1.
\end{equation*}
This concludes the proof.
\end{proof}
\begin{corollary}\label{clry:rate of uniform boundedness 2}
    For all $m\in\N$, let $f_m:\Rnn\to\Rnn$ be a family of functions.
    If for all $t\in\Rnn$, $\limsup_{m\to\infty}f_m(t)\leq 1$, then there exists $(t_m)_{m\in\N}\subset\Rnn$ with $t_m\to\infty$ as $m\to\infty$ such that $\limsup_{m\to\infty}f_m(t_m)\leq 1$.
    If $f_m$ is nondecreasing for all $m\in\N$, then the converse implication also holds.
\end{corollary}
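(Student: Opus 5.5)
The forward direction is a diagonal argument that reduces to \cref{lemma:rate of uniform boundedness}. The converse uses monotonicity of the $f_m$. I will use the direct part for integer arguments and then extend to real $t$.

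\textbf{Direct implication.} Define the array $x_{m,n} := f_m(n)$ for $m,n\in\N$. By hypothesis, $\limsup_{m\to\infty}x_{m,n}\leq 1$ for every fixed $n\in\N$, since $n\in\Rnn$. \Cref{lemma:rate of uniform boundedness} then supplies a nondecreasing integer sequence $(r_m)_{m\in\N}$ with $r_m\to\infty$ and $\limsup_{m\to\infty}x_{m,r_m}\leq 1$. Setting $t_m := r_m\in\Rnn$ gives $t_m\to\infty$ and $\limsup_{m\to\infty} f_m(t_m)\leq 1$. No monotonicity of $f_m$ is needed here.

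\textbf{Converse implication.} Assume each $f_m$ is nondecreasing, and let $(t_m)_{m\in\N}$ satisfy $t_m\to\infty$ and $\limsup_{m\to\infty} f_m(t_m)\leq 1$. Fix $t\in\Rnn$. Because $t_m\to\infty$, there is $m_0$ with $t_m\geq t$ for all $m\geq m_0$. Monotonicity then gives $f_m(t)\leq f_m(t_m)$ for $m\geq m_0$. Taking the limit superior yields $\limsup_{m\to\infty} f_m(t)\leq\limsup_{m\to\infty} f_m(t_m)\leq 1$, and this holds for every $t\in\Rnn$.

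Neither direction poses a real obstacle. The only point needing care is in the direct part: the hypothesis must be applied at integer arguments only, so that the lemma's hypotheses on the array $(x_{m,n})$ are met exactly. The lemma's output is already integer-valued, so it embeds in $\Rnn$ without any further adjustment.
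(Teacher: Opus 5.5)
Your proposal is correct and follows the paper's proof exactly: the direct implication applies \cref{lemma:rate of uniform boundedness} to the array $x_{m,n} = f_m(n)$, and the converse uses $t_m \geq t$ eventually together with the monotonicity of each $f_m$ to get $f_m(t)\leq f_m(t_m)$ before taking the limit superior.
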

\begin{proof}
    We begin with the direct implication.
    Define $x_{m,n} = f_m(n)$ for all $m,n\in\N$.
    By assumption, $x$ satisfies the assumptions of \cref{lemma:rate of uniform boundedness}, yielding the existence of a sequence $(t_m)_{m\in\N}$ such that $t_m\to\infty$ as $m\to\infty$ and $\limsup_{m\to\infty} f_m(t_m) = \limsup_{m\to\infty}x_{m,t_m}\leq 1$.
    Assume now that $f_m$ is nondecreasing for all $m\in\N$, and take $(t_m)_{m\in\N}$ as given in the assumption.
    Let $t\in\R$; there exists $m_0\in\N$ such that $t_m\geq t$ for all $m\geq m_0$ since $t_m\to\infty$ as $m\to\infty$.
    In particular, $f_m(t)\leq f_m(t_m)$ for all $m\geq m_0$, and the converse implication follows by taking the limit superior over $m\geq m_0$.
\end{proof}

\paragraph*{Optional sampling theorem for nonnegative supermartingales}
The second result is a strengthening of the optional sampling theorem in the case of nonnegative supermartingales.
The classical result requires bounded stopping times, and is commonly extended to \gls{as} finite ones; see for instance \citet[Theorem 10.11]{klenke2020probability}.
We require allowing infinite values.
It is known that the result also extends to that case under an additional uniform integrability requirement; see for instance the discussion after (6) in \cite{ramdasAdmissibleAnytimevalidSequential2022}.
A formal statement can be adapted from \citet[Theorem 9.30]{kallenbergFoundations2021}, which involves submartingales on $\R^+$.
In the interest of simplicity, however, we now provide a self-contained statement and proof of the special case that we use in this work.
\begin{theorem}\label{thm:optional sampling nonnegative supermartingales}
    Let $P\in\probaMeasures(\Omega)$ and $X = (X_n)_{n\in\N}$ be a nonnegative $\filtration$-supermartingale on $(\Omega,\sigAlg,P)$, where $\filtration$ is a filtration.
    Let $\tau$ be an $\filtration$-stopping time, possibly infinite (meaning that $P[\tau=\infty]$ may be positive).
    Then, $X_\tau$ is integrable and \begin{equation*}
        \E_P[X_\tau]\leq\E_P[X_0],
    \end{equation*}
    where we recall the convention $X_\infty := \limsup_{n\to\infty}X_n$.
\end{theorem}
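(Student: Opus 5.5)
The plan is to reduce to the bounded case and pass to the limit with Fatou's lemma. The two ingredients are the classical optional sampling theorem for bounded stopping times and the almost sure convergence of nonnegative supermartingales.

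First, fix $k\in\N$ and consider the bounded stopping time $\tau\pwMin k$. By the classical optional sampling theorem for supermartingales with bounded stopping times (e.g.\ \citealp[Theorem 10.11]{klenke2020probability}, or directly by writing $X_{\tau\pwMin k} = X_0 + \sum_{i=0}^{k-1}\indicator_{\{\tau>i\}}(X_{i+1}-X_i)$ and using $\{\tau>i\}\in\filtration_i$ together with the supermartingale property), we get
\begin{equation*}
    \E_P[X_{\tau\pwMin k}]\leq\E_P[X_0]
\end{equation*}
for every $k\in\N$. Integrability of $X_{\tau\pwMin k}$ holds because it is dominated by $\sum_{i=0}^k X_i$.

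Second, I identify the pointwise limit of $X_{\tau\pwMin k}$ as $k\to\infty$. On $\{\tau<\infty\}$, the sequence is eventually equal to $X_\tau$. On $\{\tau=\infty\}$, it equals $X_k$. Since $X$ is a nonnegative supermartingale, the supermartingale convergence theorem (Doob's upcrossing argument; \citealp[Theorem 11.5]{williams1991martingales}) gives that $X_n$ converges $P$-\gls{as} to a finite limit. Hence $X_k\to\limsup_n X_n = X_\infty$ almost surely, consistently with the convention \cref{eq:convention evaluation at infinity}. Therefore $X_{\tau\pwMin k}\to X_\tau$, $P$-\gls{as}.

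Third, since the variables are nonnegative, Fatou's lemma yields
\begin{equation*}
    \E_P[X_\tau]\leq\liminf_{k\to\infty}\E_P[X_{\tau\pwMin k}]\leq\E_P[X_0]<\infty,
\end{equation*}
which gives both integrability of $X_\tau$ and the claimed bound. Here $\E_P[X_0]<\infty$ by the integrability built into the supermartingale definition.

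The only delicate point is the second step. It requires invoking almost sure convergence of nonnegative supermartingales, so that the $\limsup$ convention coincides with the actual limit; if that convergence failed, Fatou's lemma would only control the $\liminf$. Measurability of $X_\tau$ on $\{\tau=\infty\}$ is immediate, since $X_\infty$ is defined as a $\limsup$ of measurable variables.
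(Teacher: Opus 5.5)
Your proposal is correct and matches the paper's proof step by step. Both apply optional sampling at the bounded stopping times $\tau\wedge k$, then use Doob's almost sure convergence of the nonnegative supermartingale to identify the almost sure limit of $X_{\tau\wedge k}$ with $X_\tau$ (including on $\{\tau=\infty\}$), and finally apply Fatou's lemma to obtain both the bound $\E_P[X_\tau]\leq\E_P[X_0]$ and integrability.
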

\begin{proof}
    First, by Doob's supermartingale convergence theorem \cite[see e.g.][Theorem 11.4]{klenke2020probability}, the limit superior in the definition of $X_\infty$ is actually a true limit and convergence holds \gls{as}
    As a result, it holds \gls{as} that \begin{equation*}
        \lim_{n\to\infty}X_{\tau\pwMin n} = X_\tau.
    \end{equation*}
    Next, $\tau_n := \tau\pwMin n$ is a bounded stopping time.
    It follows from the optional sampling theorem \cite[Theorem 10.11]{klenke2020probability} that \begin{equation*}
        \E_P[X_{\tau_n}]\leq \E_P[X_0].
    \end{equation*}
    We can thus conclude by Fatou's lemma: \begin{equation*}
        \E_P[X_\tau] = \E_P\left[\lim_{n\to\infty}X_{\tau_n}\right]\leq\liminf_{n\to\infty}\E_P[X_{\tau_n}]\leq\E_{P}[X_0].
    \end{equation*}
    In particular, $X_\tau$ is integrable by nonnegativity.
\end{proof}
To extend this result to supermartingales that are lower-bounded by integrable variables, we leverage the following result that shows equality in the case of a martingale of conditional expectations.
\begin{theorem}\label{thm:optional sampling levy martingale}
    Let $P\in\probaMeasures(\Omega)$, $X$ be a $P$-integrable real variable, $\filtration = (\filtration_{n})_{n\in\N}$ be a filtration, and define the martingale \begin{equation*}
        \forall n\in\N,\quad X_n = \E_{P}[X\mid\filtration_n].
    \end{equation*}
    Then, for all $\filtration$-stopping time $\tau$, possibly infinite, $X_\tau$ is integrable and $\E_P[X_\tau] = \E_P[X]$.
\end{theorem}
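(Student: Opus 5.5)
The plan is to first settle the finite-horizon case by the classical optional sampling theorem, and then pass to possibly infinite $\tau$ via martingale convergence.

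First I will fix the value of $X_\infty$. The martingale $(X_n)_{n\in\N}$ is uniformly integrable because it is a Lévy martingale of conditional expectations. By Lévy's upward theorem \citep[Theorem 14.2]{williams1991martingales}, $X_n\to X_\infty^\ast:=\E_P[X\mid\filtration_\infty]$ both $P$-\gls{as} and in $L_1$. The limit superior defining $X_\infty$ in \cref{eq:convention evaluation at infinity} is then a true limit \gls{as}, so $X_\infty=\E_P[X\mid\filtration_\infty]$, $P$-\gls{as}. (That convention was stated for nonnegative processes, but it makes sense here as an extended real limit superior and coincides \gls{as} with the actual limit.) Consequently, on $\{\tau=\infty\}$ we have $X_\tau=\E_P[X\mid\filtration_\infty]$.

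Next I will identify $X_\tau$ with a conditional expectation, namely $X_\tau=\E_P[X\mid\filtration_\tau]$ \gls{as}, where $\filtration_\tau=\{A\in\filtration_\infty\mid A\cap\{\tau\leq n\}\in\filtration_n\ \forall n\}$. I will argue this directly rather than citing a theorem. $X_\tau$ is $\filtration_\tau$-measurable, since it equals $\sum_{n\in\N}X_n\indicator_{\{\tau=n\}}+X_\infty\indicator_{\{\tau=\infty\}}$. Now take $A\in\filtration_\tau$. Each set $A\cap\{\tau=n\}$ lies in $\filtration_n$, so $\E_P[X_n\indicator_{A\cap\{\tau=n\}}]=\E_P[X\indicator_{A\cap\{\tau=n\}}]$. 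Also $A\cap\{\tau=\infty\}\in\filtration_\infty$, so $\E_P[X_\infty\indicator_{A\cap\{\tau=\infty\}}]=\E_P[X\indicator_{A\cap\{\tau=\infty\}}]$. Summing over the countable partition gives the identity, but integrability must be secured first.

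For integrability I will use $\lvert X_n\rvert\leq\E_P[\lvert X\rvert\mid\filtration_n]$, and likewise $\lvert X_\infty\rvert\leq\E_P[\lvert X\rvert\mid\filtration_\infty]$. Hence
\begin{equation*}
\E_P[\lvert X_\tau\rvert]\leq\sum_{n\in\N}\E_P[\lvert X\rvert\indicator_{\{\tau=n\}}]+\E_P[\lvert X\rvert\indicator_{\{\tau=\infty\}}]=\E_P[\lvert X\rvert]<\infty,
\end{equation*}
where each equality uses the tower property and the fact that $\{\tau=n\}\in\filtration_n$. This bound also justifies interchanging the countable sum with the expectation, by dominated convergence applied to the partial sums over $A\cap\{\tau\leq N\}$. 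Taking $A=\Omega$ then yields $\E_P[X_\tau]=\E_P[X]$.

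I expect the main obstacle to be the treatment of $\{\tau=\infty\}$, specifically ensuring that the paper's limsup convention agrees \gls{as} with $\E_P[X\mid\filtration_\infty]$. This is exactly where Lévy's upward theorem is needed. The rest is bookkeeping with the countable partition $\{\tau=n\}$, $n\in\N\cup\{\infty\}$.
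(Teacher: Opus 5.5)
Your proof is correct, but it takes a different route from the paper's.

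The paper works with the stopped martingale $S_n = X_{\tau\pwMin n}$. It dominates $\lvert S_n\rvert$ by $T_n=\E_P[\lvert X\rvert\mid\filtration_{\tau\pwMin n}]$ and proves by hand that $(S_n)_{n\in\N}$ is uniformly integrable. It then uses L\'evy's upward theorem to get $S_n\to X_\tau$ a.s., upgrades this to $L_1$ convergence, and concludes from $\E_P[S_n]=\E_P[X_0]$, i.e.\ optional stopping for the bounded times $\tau\pwMin n$.

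You instead split $\Omega$ along the countable partition $\{\tau=n\}$, $n\in\N\cup\{\infty\}$. On each piece you apply the defining property of $\E_P[X\mid\filtration_n]$, respectively $\E_P[X\mid\filtration_\infty]$. Conditional Jensen gives $\E_P[\lvert X_\tau\rvert]\le\E_P[\lvert X\rvert]$, and dominated convergence justifies exchanging sum and expectation.

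Both arguments rest on the same two facts: the bound $\lvert X_n\rvert\le\E_P[\lvert X\rvert\mid\filtration_n]$, and L\'evy's upward theorem to identify the limsup convention with $\E_P[X\mid\filtration_\infty]$ on $\{\tau=\infty\}$. Your version avoids uniform integrability, optional stopping for bounded times, and any $L_1$-convergence criterion, and it also gives the stronger identity $X_\tau=\E_P[X\mid\filtration_\tau]$ a.s. The paper's route additionally shows $X_{\tau\pwMin n}\to X_\tau$ in $L_1$, and it parallels the paper's truncate-and-pass-to-the-limit proof for nonnegative supermartingales.

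Two small remarks:
\begin{itemize}
\item The $\filtration_\tau$-measurability of $X_\tau$ is needed only for that extra identity. The claim $\E_P[X_\tau]=\E_P[X]$ already follows by taking $A=\Omega$, using just $\{\tau=n\}\in\filtration_n$ and $\{\tau=\infty\}\in\filtration_\infty$.
\item The uniform integrability of $(X_n)_{n\in\N}$ that you mention at the start is never used.
\end{itemize}
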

\begin{proof}
    Let $\tau$ be an $\filtration$-stopping time.
    Introduce $U_n = \E_P[\lvert X\rvert\mid\filtration_n]$, as well as the stopped processes $S_n = X_{\tau\pwMin n}$ and $T_n = U_{\tau\pwMin n}$.
    Since $X_\bullet$ and $U_\bullet$ are martingales by the tower property, $S$ and $T$ are both martingales as well by \citet[Theorem 10.15]{klenke2020probability}.
    We show that $S$ converges \gls{as} to $X_\tau$ and is uniformly integrable, which will show that it converges to $X_\tau$ in $L_1(\Omega,\sigAlg,P)$ by \citet[Theorem 4.6.3]{durrett2019probability}.
    The \gls{as} convergence of $S$ to $X_\tau$ is clear after noticing that $X_n\to \E_P[X\mid\filtration_\infty] = X_\infty$ as $n\to\infty$ both in $L_1$ and \gls{as}, by Lévy's upward theorem \citep[Theorem 14.2]{williams1991martingales}.
    Turning to the uniform integrability, it follows from Jensen's inequality for conditional expectations that for all $n\in\N$, \begin{equation*}
        \lvert S_n\rvert \leq \sum_{k=0}^n \indicator_{\{\tau_n = k\}} \lvert X_k\rvert \leq \sum_{k=0}^n \indicator_{\{\tau_n = k\}} \E_P[\lvert X\rvert\mid\filtration_{k}] = \E_P[\lvert X\rvert\mid\filtration_{\tau_n}]= T_n,\quad \text{\gls{as}}
    \end{equation*}
    Consequently, for all $K>0$, $\{\lvert S_n\rvert>K\}\subset\{T_n>K\}$, and thus \begin{align*}
        \E_P[\lvert S_n\rvert \indicator_{\{\lvert S_n\rvert >K\}}]
            &\leq \E_P[T_n\indicator_{\{T_n>K\}}]\\
            &= \E_P[\E_P[\lvert X\rvert\mid\filtration_{\tau_n}]\indicator_{\{T_n>K\}}]\\
            &= \E_P[\E_P[\lvert X\rvert\indicator_{\{T_n>K\}}\mid\filtration_{\tau_n}]]\\
            &= \E_P[\lvert X\rvert\indicator_{\{T_n>K\}}],
    \end{align*}
    where the second equality comes from the fact that $T_n$ is $\filtration_n$-measurable.
    Next, by Markov's inequality applied to $T_n$, \begin{equation*}
        P[T_n>K]\leq K^{-1}\E_P[T_n] = K^{-1} \E_P[T_0] = K^{-1}\E_P[\E_P[\lvert X\rvert\mid\filtration_0]] = K^{-1}\E_P[\lvert X\rvert],
    \end{equation*}
    where we applied the fact that $T$ is a martingale and the tower property.
    We conclude by using the fact that, for every $\epsilon>0$, there exists $\delta>0$ such that for all $A\in\sigAlg$ such that $P[A]\leq\delta$, then $\E_P[\lvert X\rvert \indicator_A]\leq \epsilon$.
    Indeed, let $\epsilon>0$, and pick $t>0$ such that $\E_P[\lvert X\rvert\indicator_{\{\lvert X\rvert >t\}}]\leq\epsilon/2$; such a $t$ necessarily exists by the \gls{as} monotone convergence of $\lvert X\rvert\indicator_{\{\lvert X\rvert >s\}}$ to $0$ as $s\to\infty$.
    Then, \begin{equation*}
        \E_P[\lvert X\rvert \indicator_A] 
            = \E_P[\lvert X\rvert \indicator_{A\cap\{\lvert X\rvert\leq t\}}] + \E_P[\lvert X\rvert \indicator_{A\cap\{\lvert X\rvert>t\}}].
            \leq t P[A] + \frac\epsilon2
    \end{equation*}
    Choosing $\delta = t^{-1}\epsilon/2$ guarantees that $\E_P[\lvert X\rvert \indicator_A] \leq \epsilon$ for all $A$ such that $P[A]\leq\delta$.
    Let then $\epsilon>0$, and introduce a suitable corresponding $\delta$ as described.
    By what precedes, there exists $K_0>0$ such that for all $K\geq K_0$ and $n\in\N$, $P[T_n>K]\leq \delta$, and thus  \begin{equation*}
        \sup_{n\in\N}\E_P[\lvert S_n\rvert \indicator_{\{\lvert S_n\rvert >K\}}] \leq \sup_{n\in\N}\E_P[\lvert X\rvert \indicator_{\{\lvert T_n\rvert >K\}}] \leq \epsilon.
    \end{equation*}
    Since this is true for all $\epsilon>0$, this shows that \begin{equation*}
        \lim_{K\to\infty}\sup_{n\in\N}\E_P[\lvert S_n\rvert \indicator_{\{\lvert S_n\rvert >K\}}] = 0,
    \end{equation*}
    which is precisely uniform integrability of $S$.
    Consequently, $S$ converges to $X_\tau$ in $L_1(\Omega,\sigAlg,P)$ by \citet[Theorem 4.6.4]{durrett2019probability}, as announced.
    This shows that $X_\tau$ is integrable, and that \begin{equation*}
        \lim_{n\to\infty} \E_P[S_n] = \E_P[X_\tau].
    \end{equation*}
    Yet, since $S$ is a martingale, $\E_P[S_n] = \E_P[S_0] = \E_P[X_{\tau\pwMin0}] = \E_P[X_0]$ for all $n\in\N$, and the result follows.
\end{proof}
\begin{corollary}\label{clry:optional sampling lower bounded supermartingales}
    Let $X = (X_n)_{n\in\N}$ be an $\filtration$-supermartingale on $(\Omega,\sigAlg,P)$, where $\filtration$ is a filtration.
    Let $\tau$ be an $\filtration$-stopping time, possibly infinite, and $Y$ be a $P$-integrable real variable.
    If $X_n\geq Y$, $P$-\gls{as}, for all $n\in\N$, then $X_\tau$ is integrable and \begin{equation*}
        \E_P[X_{\tau}]\leq\E_P[X_0].
    \end{equation*}
    If, instead, $X_n\geq Y$, $P$-\gls{as}, for all $n\in\integers{N}$ for some $N\in\N$, then $X_{\tau\pwMin N}$ is integrable and \begin{equation*}
        \E_P[X_{\tau\pwMin N}]\leq\E_P[X_0].
    \end{equation*}
\end{corollary}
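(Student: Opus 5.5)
The plan is to reduce to the nonnegative case, \cref{thm:optional sampling nonnegative supermartingales}, by shifting the supermartingale by a martingale built from $Y$. Fix $P$ and introduce the Lévy martingale $Y_n := \E_P[Y\mid\filtration_n]$. Since $X_n\geq Y$ $P$-a.s.\ and $X_n$ is $\filtration_n$-measurable, taking conditional expectations gives $X_n = \E_P[X_n\mid\filtration_n]\geq \E_P[Y\mid\filtration_n] = Y_n$, $P$-a.s. Hence $Z_n := X_n - Y_n$ is a nonnegative, integrable process. It is a supermartingale, being the difference of a supermartingale and a martingale.

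Applying \cref{thm:optional sampling nonnegative supermartingales} to $Z$ shows that $Z_\tau$ is integrable and $\E_P[Z_\tau]\leq\E_P[Z_0]$. Applying \cref{thm:optional sampling levy martingale} to $(Y_n)$ shows that $Y_\tau$ is integrable and $\E_P[Y_\tau] = \E_P[Y] = \E_P[Y_0]$.

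The delicate step, and the main obstacle, is the behaviour at $\{\tau=\infty\}$. The conventions define $X_\infty$, $Z_\infty$ and $Y_\infty$ each as a limit superior, so we need $X_\tau = Z_\tau + Y_\tau$ a.s. On $\{\tau<\infty\}$ this is immediate. On $\{\tau=\infty\}$, Doob's convergence theorem (as in the proof of \cref{thm:optional sampling nonnegative supermartingales}) makes $Z_n$ converge a.s.\ to a finite limit. Lévy's upward theorem makes $Y_n$ converge a.s.\ to $\E_P[Y\mid\filtration_\infty]$, which is finite. Hence $X_n = Z_n + Y_n$ converges a.s., so $\limsup X_n = \lim Z_n + \lim Y_n$ and the decomposition holds at $\infty$. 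Combining,
\[
\E_P[X_\tau] = \E_P[Z_\tau] + \E_P[Y_\tau] \leq \E_P[Z_0] + \E_P[Y_0] = \E_P[X_0],
\]
and $X_\tau$ is integrable as the sum of two integrable variables.

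For the second claim, consider the stopped process $X'_n := X_{n\pwMin N}$. It is a supermartingale, by the stopped-process result cited in \cref{thm:optional sampling levy martingale} or a direct check. It satisfies $X'_n\geq Y$ for all $n\in\N$, because $n\pwMin N\in\integers N$. Applying the first part to $X'$ and $\tau$ gives that $X'_\tau$ is integrable and $\E_P[X'_\tau]\leq\E_P[X'_0]=\E_P[X_0]$. On $\{\tau<\infty\}$ we have $X'_\tau = X_{\tau\pwMin N}$. On $\{\tau=\infty\}$, $X'_\tau = \limsup_n X_{n\pwMin N} = X_N = X_{\tau\pwMin N}$. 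This yields the claim.
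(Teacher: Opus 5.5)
Your proposal is correct and follows the paper's proof essentially step for step. You shift by the L\'evy martingale $Y_n=\E_P[Y\mid\filtration_n]$, apply \cref{thm:optional sampling nonnegative supermartingales} to $X_n-Y_n$ and \cref{thm:optional sampling levy martingale} to $Y_n$, and obtain the bounded-horizon case from the stopped process $X_{n\pwMin N}$. Your explicit checks that $X_n\geq Y_n$ (by conditioning) and that $X_\tau=Z_\tau+Y_\tau$ also holds on $\{\tau=\infty\}$ under the limit-superior convention are left implicit in the paper, and you argue them correctly.
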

\begin{proof}
    We begin with the case where $X_n\geq Y$ for all $n\in\N$.
    For all $n\in\N$, define $Y_n = \E_P[Y\mid \filtration_n]$; it constitutes a martingale.
    Indeed, \begin{equation*}
        \forall n\in\N, \quad\E_P[Y_{n+1}\mid\filtration_{n}] = \E_P[\E_P[Y\mid\filtration_{n+1}]\mid\filtration_n] = \E_P[Y\mid\filtration_n] = Y_n.
    \end{equation*}
    Consequently, $Z_n := X_n - Y_n$ defines a nonnegative supermartingale, $n\in\N$.
    It follows from \cref{thm:optional sampling nonnegative supermartingales} that \begin{equation*}
        \E_{P}[X_{\tau} - Y_{\tau}] \leq \E_P[X_0 - Y_0].
    \end{equation*}
    Now, it follows from \cref{thm:optional sampling levy martingale} that $Y_\tau$ is integrable, and that $\E_P[Y_\tau] = \E_P[Y_0]$.
    We deduce that $X_{\tau} = (X_{\tau} - Y_\tau) + Y_\tau$ is integrable as the sum of integrable variables, and \begin{equation*}
        \E_P[X_\tau]\leq \E_P[X_0 - Y_0] + \E_P[Y_\tau] = \E_P[X_0],
    \end{equation*}
    which is the result.
    The case where $X_n\geq Y$ \gls{as} for all $n\in\integers N$ with $N\in\N$ follows immediately by applying what precedes to the stopped process $\bar X_n = X_{n\pwMin N}$, concluding the proof.
\end{proof}
\section{Characterization of uniformly asymptotic p-variables}\label{apdx:characterization uniformly asymptotic p-variables}
We prove in that section that \cref{def:uniformly asymptotic p-variables} is indeed an equivalent characterization of uniformly asymptotic p-variables and of uniformly strongly asymptotic p-variables as they are defined in the original work of \cite{ignatiadisAsymptoticCompoundEvalues2024}.
We begin by recalling the definition in this reference, while sticking to our convention of using the index $m\in\N$ for the approximation index.
\begin{definition}[Approximate p-variable]\label{def:approximate p-variable}
    Let $\epsilon:\subsetProbaMeasures\to\Rnn$ and $\delta:\subsetProbaMeasures\to[0,1]$ be functions.
    A nonnegative, finite random variable $p$ is an \emph{$(\epsilon,\delta)$-approximate p-variable} for $\subsetProbaMeasures$ if \begin{equation*}
        \forall P\in\subsetProbaMeasures, \forall \alpha\in(0,1),\quad P[p\leq \alpha]\leq(1+\epsilon(P))\alpha + \delta(P).
    \end{equation*}
\end{definition}
We introduce the notations $\mathcal E$ and $\mathcal D$ to denote the sets of functions from $\subsetProbaMeasures$ to $\Rnn$ and from $\subsetProbaMeasures$ to $[0,1]$, respectively.
\begin{theorem}\label{thm:characterization uniformly asymptotic p-variables}
    A nonnegative, finite process $p=(p_m)_{m\in\N}$ is 
    \begin{enumerate}[label=(\roman*)]
        \item \label{item:uniformly asymptotic p-variable} a \emph{uniformly asymptotic p-variable} if and only if for $\subsetProbaMeasures$ there exists $((\epsilon_m,\delta_m))_{m\in\N}\subset\mathcal E\times\mathcal D$ such that $p_m$ is $(\epsilon_m,\delta_m)$-approximate for $\subsetProbaMeasures$ for all $m\in\N$ and
        \begin{equation}
        \label{eq:uniform strongly asymp equivalent}
            \lim_{m\to\infty}\sup_{P\in\subsetProbaMeasures}\epsilon_m(P) = \lim_{m\to\infty}\sup_{P\in\subsetProbaMeasures}\delta_m(P) = 0;
        \end{equation}
        \item \label{item:uniformly strongly asymptotic p-variable} a \emph{uniformly strongly asymptotic p-variable} if
        and only if condition \cref{eq:uniform strongly asymp equivalent} is replaced by \begin{equation}\label{eq:definition uniformly strongly asymptotic p-variable}
            \lim_{m\to\infty}\sup_{P\in\subsetProbaMeasures}\epsilon_m(P) = 0 \quad\text{and}\quad \sup_{P\in\subsetProbaMeasures}\delta_m(P) = 0\quad\text{for $m$ large enough}.
        \end{equation}
    \end{enumerate}
\end{theorem}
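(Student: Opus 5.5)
We prove the two equivalences separately, treating each direction as an explicit construction.

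For the uniformly asymptotic case \labelcref{item:uniformly asymptotic p-variable}, the easy direction is the reverse one. Assume $p_m$ is $(\epsilon_m,\delta_m)$-approximate with vanishing suprema. Then for each fixed $\alpha\in(0,1)$ we have $\sup_P P[p_m\leq\alpha]\leq(1+\sup_P\epsilon_m(P))\alpha+\sup_P\delta_m(P)$. The right-hand side tends to $\alpha$, which gives \cref{eq:uniformly asymptotic p-variable}.

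The forward direction is the main obstacle. From \cref{eq:uniformly asymptotic p-variable} we only know pointwise-in-$\alpha$ control, whereas we need a single pair $(\epsilon_m,\delta_m)$ that works uniformly over $\alpha\in(0,1)$. We take $\epsilon_m\equiv 0$ and
\begin{equation*}
\delta_m(P):=\sup_{\alpha\in(0,1)}\left(P[p_m\leq\alpha]-\alpha\right)^+\in[0,1],
\end{equation*}
so that approximateness holds by construction. It remains to show $\sup_P\delta_m(P)\to0$, and we do this by a grid argument. Fix $k\in\Np$ and consider the finite grid $\alpha_j=j/k$ for $j=1,\dots,k-1$. For $\alpha\in[\alpha_j,\alpha_{j+1})$, monotonicity of the CDF gives
\begin{equation*}
P[p_m\leq\alpha]-\alpha\leq P[p_m\leq\alpha_{j+1}]-\alpha_{j+1}+1/k.
\end{equation*}
For $\alpha<1/k$ we bound directly by $P[p_m\leq 1/k]$. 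At the right end, for $\alpha\in[\alpha_{k-1},1)$, the bound $P[p_m\leq\alpha]-\alpha\leq 1-\alpha\leq 1/k$ is trivial. Hence $\sup_P\delta_m(P)\leq \max_j\sup_P(P[p_m\leq\alpha_j]-\alpha_j)^++2/k$. A finite maximum of sequences with nonpositive limsup has nonpositive limsup, so $\limsup_m\sup_P\delta_m(P)\leq 2/k$. Letting $k\to\infty$ finishes this direction. The point is that the finite grid lets the pointwise limsups commute with the maximum.

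For the strongly asymptotic case \labelcref{item:uniformly strongly asymptotic p-variable}, the reverse direction goes as follows. When $\delta_m\equiv0$ for large $m$, we get $\alpha^{-1}P[p_m\leq\alpha]\leq 1+\epsilon_m(P)$ uniformly in $\alpha$. Taking suprema over $\alpha$ and $P$ gives \cref{eq:uniformly strongly asymptotic p-variable}. For the forward direction, set
\begin{equation*}
c_m:=\sup_P\sup_\alpha\alpha^{-1}P[p_m\leq\alpha],
\end{equation*}
whose limsup is at most $1$ by assumption. Where $c_m<\infty$, which holds for all $m\geq m_0$, we set $\delta_m\equiv0$ and $\epsilon_m\equiv(c_m-1)^+$, which tends to $0$. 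For the finitely many $m<m_0$ we choose $\epsilon_m\equiv0$ and $\delta_m\equiv1$, which is trivially valid. The resulting pair is approximate and satisfies \cref{eq:definition uniformly strongly asymptotic p-variable}.
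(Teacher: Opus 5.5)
Your argument is correct.

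\textbf{Part (ii).} This coincides with the paper's construction. For $m<m_0$ both take $\epsilon_m\equiv0$ and $\delta_m\equiv1$. Afterwards both take $\delta_m\equiv0$ and $\epsilon_m$ equal to the positive part of the worst-case ratio minus one. The only difference is that you make $\epsilon_m$ constant in $P$ via $c_m$, whereas the paper uses the $P$-dependent choice $\max\{0,\sup_{\alpha\in(0,1)}\alpha^{-1}P[p_m\leq\alpha]-1\}$. This is immaterial.

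\textbf{Part (i).} Here the routes differ. The paper gives no argument and simply invokes \citet[Proposition 3.10]{ignatiadisAsymptoticCompoundEvalues2024}. You instead supply a self-contained P\'olya-type grid argument, and it holds up. For $k\geq2$, split into three regimes: $\alpha<1/k$; $\alpha\in[j/k,(j+1)/k)$ with $j\leq k-2$; and $\alpha\geq(k-1)/k$. These give
$\sup_{\alpha}(P[p_m\leq\alpha]-\alpha)^{+}\leq\max_{1\leq j\leq k-1}(P[p_m\leq j/k]-j/k)^{+}+1/k$, so your $2/k$ is even slightly generous. Reducing to finitely many levels is exactly what lets the pointwise-in-$\alpha$ hypothesis control a quantity that is uniform in $\alpha$.

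\textbf{Comparison.} Your route buys self-containedness. It also gives a small extra observation: in the non-strong case one can always take $\epsilon_m\equiv0$. The multiplicative slack is then superfluous, and everything is absorbed into the additive term $\delta_m$. The paper's route buys brevity, at the price of relying on the reference's formulation matching the one used here.
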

\begin{proof}
    The fact that \cref{eq:uniformly asymptotic p-variable} is equivalent to \cref{thm:characterization uniformly asymptotic p-variables}\labelcref{item:uniformly asymptotic p-variable} is shown in \citet[Proposition 3.10]{ignatiadisAsymptoticCompoundEvalues2024}.
    We thus focus on showing that \cref{eq:uniformly strongly asymptotic p-variable} is equivalent to \cref{thm:characterization uniformly asymptotic p-variables}\labelcref{item:uniformly strongly asymptotic p-variable}, and begin with the converse implication.
    Let then $p = (p_m)_{m\in\N}$ be a nonnegative process satisfying \cref{thm:characterization uniformly asymptotic p-variables}\labelcref{item:uniformly strongly asymptotic p-variable}; we show that it also satisfies\begin{equation}\label{eq:characterization uniformly strongly asymptotic p-variable}
        \limsup_{m\to\infty}\sup_{P\in\subsetProbaMeasures}\sup_{\alpha\in(0,1)}\alpha^{-1}P[p_m\leq\alpha]\leq 1.
    \end{equation}
    Let $((\epsilon_m,\delta_m))_{m\in\N}\subset\mathcal E\times\mathcal D$ be such that $p_m$ is an $(\epsilon_m,\delta_m)$-approximate p-variable for $\subsetProbaMeasures$ for all $m\in\N$, and \cref{eq:definition uniformly strongly asymptotic p-variable} holds.
    There exists $m_0\in\N$ such that for all $m\geq m_0$, $\sup_{P\in\subsetProbaMeasures}\delta_m(P) = 0$.
    Consequently, by definition, for all $m\geq m_0$, $\alpha\in(0,1)$, and $P\in\subsetProbaMeasures$, \begin{align*}
        P[p_m\leq \alpha]\leq(1+\epsilon_m(P))\alpha + \delta_{m}(P) = (1+\epsilon_m(P))\alpha,
    \end{align*}
    showing that \begin{equation*}
        \alpha^{-1}P[p_m\leq\alpha]\leq 1+\epsilon_m(P).
    \end{equation*}
    Taking first the suprema over $\alpha\in(0,1)$ and $P\in\subsetProbaMeasures$ and then the limit superior over $m\geq m_0$ shows that \cref{eq:characterization uniformly strongly asymptotic p-variable} holds, where we leverage the fact that $\lim_{m\to\infty}\sup_{P\in\subsetProbaMeasures}\epsilon_m(P) = 0$.
    \par
    We now consider the direct implication, and assume that \cref{eq:characterization uniformly strongly asymptotic p-variable} holds.
    It follows immediately that there exists $m_0\in\N$ such that \begin{equation*}
        \forall m\geq m_0,\quad\sup_{P\in\subsetProbaMeasures}\sup_{\alpha\in(0,1)}\alpha^{-1} P[p_m\leq \alpha]<\infty
    \end{equation*}
    Define then for all $m\in\N$ and $P\in\subsetProbaMeasures$ \begin{align*}
        \delta_m(P) &= \begin{cases}
            1,&\text{if }m<m_0,\\
            0,&\text{otherwise,}
        \end{cases}\\
        \text{and}\quad
        \epsilon_m(P) &= \begin{cases}
            0,&\text{if }m< m_0,\\
            \max\{0,\sup_{\alpha\in(0,1)}\alpha^{-1}P[p_m\leq\alpha]-1\},&\text{otherwise}.
        \end{cases}
    \end{align*}
    One immediately verifies that $P[p_m\leq\alpha]\leq(1+\epsilon_m(P))\alpha + \delta_m(P)$, for all $\alpha\in(0,1)$, and that $\epsilon_m(P)$ is finite for all $m\in\N$ and $P\in\subsetProbaMeasures$, by construction.
    Furthermore, $\sup_{P\in\subsetProbaMeasures}\delta_m(P) = 0$ for all $m\geq m_0$, also by construction.
    All that is left is to verify that $\lim_{m\to\infty}\sup_{P\in\subsetProbaMeasures}\epsilon_m(P) = 0$.
    But this follows immediately from \cref{eq:characterization uniformly strongly asymptotic p-variable}; indeed, \begin{equation*}
        \limsup_{m\to\infty} \sup_{P\in\subsetProbaMeasures}\sup_{\alpha\in(0,1)}\alpha^{-1}P[p_m\leq\alpha] - 1\leq 0,
    \end{equation*}
    and therefore \begin{equation*}
        0\leq\limsup_{m\to\infty}\sup_{P\in\subsetProbaMeasures}\epsilon_m(P) = \max\{0,\limsup_{m\to\infty} \sup_{P\in\subsetProbaMeasures}\sup_{\alpha\in(0,1)}\alpha^{-1}P[p_m\leq\alpha] - 1\} = 0,
    \end{equation*}
    showing that $\lim_{m\to\infty}\sup_{P\in\subsetProbaMeasures}\epsilon_m(P)$ exists and is indeed $0$, concluding the proof.
\end{proof}
\end{document}